\documentclass[11pt,authoryear,titlepage]{article}
\pdfoutput = 1
\usepackage{fullpage}
\usepackage{amsthm}
\usepackage{amsmath}
\usepackage{enumitem}
\usepackage{amssymb}
\usepackage{setspace}
\usepackage{graphicx}
\usepackage{amsfonts}
\usepackage{makecell}
\usepackage{algorithm}
\usepackage{algpseudocode}
\usepackage{booktabs}
\usepackage{soul}
\usepackage[hang,flushmargin]{footmisc}
\usepackage[usenames,dvipsnames]{xcolor}
\usepackage[colorlinks=TRUE,citecolor=blue,linkcolor=BrickRed,urlcolor=PineGreen]{hyperref}
\usepackage[left=2.5cm,right=2.5cm,top=3cm,bottom=3cm]{geometry}
\allowdisplaybreaks

\usepackage{natbib}
\newcommand*{\doi}[1]{\href{http://doi.org/#1}{doi: #1}}

\newcommand{\bs}[1]{\boldsymbol{#1}}

\DeclareMathOperator*{\argmax}{arg\ max}
\DeclareMathOperator*{\arcoth}{arcoth}
\DeclareMathOperator*{\Cauchy}{Cauchy}
\DeclareMathOperator*{\sign}{sign}

\def\E{\mathrm{E}}

\def\Pr{\mathrm{Pr}}

\DeclareMathOperator*{\N}{N}
\DeclareMathOperator*{\M}{M}

\floatname{algorithm}{Procedure}
\newtheorem{theorem}{Theorem}

\newtheorem{lemma}{Lemma}
\newtheorem{corollary}{Corollary}

\begin{document}
\singlespacing
\title{Constructing and extending $n=1$ Bayesian confidence intervals for location parameters in location-scale families}
\author{David Gerard$^1$
  \vspace{3mm}\\
  \small$^1$Department of Mathematics and Statistics, American University, Washington, DC, 20016, USA}
\date{}
\maketitle

\begin{abstract}
  It is a surprising, modestly known fact that when given a single
  observation from a normal distribution with unknown mean and unknown
  variance, valid and non-trivial confidence intervals for the mean
  can be constructed. These intervals are presented in papers fully
  formed, providing limited intuition for how they arise or how to
  generalize them. We show that these intervals can be constructed in
  a principled way using two separate Bayesian reasonings. In the
  first, for any continuous symmetric location-scale family (under
  mild regularity conditions) with $n=1$ observation, we derive priors
  which produce $(1 - \alpha)100\%$ credible intervals that are,
  asymptotically in the confidence level $\alpha \rightarrow 0$, valid
  $(1 - \alpha)100\%$ confidence intervals. In the second, we show
  that the $n=1$ frequentist intervals can be seen as $t$-intervals
  augmented with a prior value, and that these augmented $t$-intervals
  are equivalent to inverted frequentist tests using a Bayes factor
  (using appropriate priors) as a test statistic. For $n \geq 2$, our
  credible interval approach does not maintain the confidence
  level. However, for $n \geq 2$, our augmented $t$-intervals produce
  valid confidence intervals with lower expected squared width in
  parts of the parameter space than the Student $t$-intervals,
  indicating improvements when prior knowledge is available. We
  demonstrate these methods on an $n = 3$ dataset of hyperbolic excess
  velocities of interstellar objects.
\end{abstract}

\section{Introduction}
\label{sec:problem.statement}

Suppose we have a single observation from a normal distribution
\begin{align}
  \label{eq:normal.obs}
  X \sim \N(\mu, \sigma^2),
\end{align}
where neither $\mu$ nor $\sigma^2$ is known. Surprisingly, valid
$(1 - \alpha)100\%$ confidence intervals for $\mu$ are possible, in
that they have minimum coverage of $1 - \alpha$. This was first
documented by \citet{abbott1962two}, and has enjoyed some steady
interest since \citep[e.g.,][]{blachman1987confidence,
  edelman1990confidence, rodriguez1996confidence, wall2001effective,
  portnoy2019invariance}.  There are a few forms these intervals take
\citep{blachman1987confidence}, but the one we will characterize using
Bayesian reasoning is
\begin{align}
  \label{eq:ci.init.2}
  \frac{X+A}{2} &\pm \eta |X - A|,
\end{align}
for some pre-specified value $A$. For a given $\alpha \leq 1/2$, it is
possible to choose a large enough value of $\eta$ to bound the
coverage probability of \eqref{eq:ci.init.2} below by $1 - \alpha$,
thus providing valid (though typically conservative) confidence
intervals
(Appendix~\ref{sec:bm.deriv}). \citet{blachman1987confidence} provide
a very nice treatment of this.

Where does \eqref{eq:ci.init.2} come from? The typical paper in this
line of literature first presents \eqref{eq:ci.init.2} and then
derives its properties. We wanted to derive \eqref{eq:ci.init.2} from
inferential principles. To do so, in this manuscript we provide two
Bayesian constructions for interval \eqref{eq:ci.init.2}. In the
first, we derive priors that yield credible intervals that are
approximately equal to \eqref{eq:ci.init.2}. This approximation is
asymptotic \emph{not} in the sample size (which stays $n = 1$) but in
the confidence level --- that is, for $\alpha$ close to 0, which is
the typical case. Interestingly, this construction works for any
continuous symmetric location-scale family (under mild regularity
conditions), and so we present it in this more general form. In the
second, we show that \eqref{eq:ci.init.2} can be constructed by
inverting a frequentist test that uses a Bayes factor (BF)
\citep{kass1995bayes} (using appropriate priors) as a test statistic.

How do we extend \eqref{eq:ci.init.2} for $n \geq 2$? Of course, for
$n \geq 2$, Student $t$-intervals are the workhorse of mean interval
estimation \citep{student1908probable}. But these only have optimality
guarantees among certain classes of confidence intervals (e.g.,
unbiased or equivariant) \citep{lehmann2006testing}. There has been
some relatively recent interest in creating biased/non-equivariant
intervals with improved performance in parts of the parameter space
that a researcher \emph{a priori} considers more likely to contain the
mean. \citet{wall2001effective} note that since
$\bar{X} \sim \N(\mu,\sigma^2/n)$ one can use \eqref{eq:ci.init.2}
with $\bar{X}$ instead of $X$. They show that there are regions of the
parameter space when $n=2$ (but not $n>2$) where this strategy results
in confidence intervals with smaller expected width than the Student
$t$-intervals. \citet{yu2018adaptive} develop frequentist assisted by
Bayes (FAB) intervals with minimum posterior expected width among a
certain class of intervals with constant coverage, but this class is
not exhaustive, making improvements in parts of the parameter space
possible. Our Bayesian paradigms for constructing \eqref{eq:ci.init.2}
provide natural generalizations for $n \geq 2$. We show that using the
same priors for $n=1$ does not produce valid confidence intervals for
$n\geq 2$ in our credible interval approach. However, we show that our
BF intervals have lower expected squared width in parts of the
parameter space than the Student $t$-interval for all $n \geq 2$,
indicating a non-trivial improvement over the Student $t$-intervals
when prior knowledge is available. We demonstrate these intervals on a
dataset of hyperbolic excess velocities of interstellar objects, where
$n=3$.

The remainder of the paper is organized as
follows. Section~\ref{sec:gen.case} develops the credible interval
approach to approximate confidence intervals for $n=1$, and
Sections~\ref{sec:normal.family} and~\ref{sec:cauchy.family} apply it
to the normal and Cauchy cases,
respectively. Section~\ref{sec:bf.invert} constructs ``augmented
$t$-intervals'' by inverting a test based on Bayes factors. We then
turn to $n \geq 2$: Section~\ref{sec:n.2.cred.bad} shows that the
credible interval approach does not generalize directly, while
Section~\ref{sec:aug.t.int.gen} shows that the augmented $t$ approach
does. Section~\ref{sec:iso} presents our real data application, and
Section~\ref{sec:discussion} concludes. We defer all technical proofs
to the Supplementary Material (unless otherwise noted, in
Appendix~\ref{sec:proofs}).

\section{$n = 1$ Bayesian confidence intervals}
\label{sec:cred.approach}

\subsection{Credible intervals: symmetric location-scale family}
\label{sec:gen.case}

We will begin by constructing a prior for $(\mu,\sigma^2)$ whose
credible interval for $\mu$ is (asymptotically in the confidence
level) equivalent to the confidence interval \eqref{eq:ci.init.2}. Our
treatment in this section is actually valid for \emph{any} continuous
symmetric location-scale family (under mild regularity conditions),
and so we present it in this generality. Our strategy will be to first
parameterize the problem in terms of the deviation from the mean,
$\beta = \mu - A$, and the standardized deviation from the mean,
$\nu = |\mu - A| / \sigma$. We will then place a scale-sign invariant
prior over $\beta$ and choose the prior over $\nu$ that results in the
marginal posterior distribution for $\beta$ having the fattest
tails. This posterior distribution will be shown to have the same
tails as the confidence distribution \citep{xie2013confidence,
  schweder2016confidence} implied by the interval
\eqref{eq:ci.init.2}. Recall that a confidence distribution of a
parameter is a distribution where the $\alpha/2$ and $1 - \alpha/2$
quantiles form a valid $(1 - \alpha)100\%$ confidence interval. That
they have the same tails shows that the equal tailed credible
intervals using these priors are confidence intervals asymptotically
in the confidence level ($\alpha \rightarrow 0$).

We begin with the likelihood. Let $\rho(\cdot)$ be a known symmetric
density. Then the density of $X$ is
$f(x|\mu,\sigma^2) = \frac{1}{\sigma}\rho\left(\frac{x -
    \mu}{\sigma}\right)$. However, it will be easier to write this
density in terms of $Y = X - A$, $\beta = \mu - A$, and
$\nu = |\mu - A| / \sigma$, the density of which comes from
Lemma~\ref{lem:y.dense}.

\begin{lemma}
  \label{lem:y.dense}
  The density of $Y$ is
  \begin{align}
    \label{eq:likelihood.beta.nu}
    f(y|\beta, \nu) &= \frac{\nu}{|\beta|}\rho\left(\nu\frac{y}{\beta} - \nu\right).
  \end{align}
\end{lemma}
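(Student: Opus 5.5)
Since $Y = X - A$ is a pure translation of $X$ by the known constant $A$, the change of variables has unit Jacobian. Hence $Y$ has density $f(y\mid\mu,\sigma^2) = \frac{1}{\sigma}\rho\!\left(\frac{y+A-\mu}{\sigma}\right) = \frac{1}{\sigma}\rho\!\left(\frac{y-\beta}{\sigma}\right)$, because $\mu - A = \beta$. That is, $Y$ is a member of the same location-scale family with location $\beta$ and scale $\sigma$. The remaining work is to reparameterize from $(\beta,\sigma)$ to $(\beta,\nu)$.

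From $\nu = |\beta|/\sigma$, and working on the set $\beta \neq 0$ where the $(\beta,\nu)$ parameterization is well defined, we have $\sigma = |\beta|/\nu$, so $1/\sigma = \nu/|\beta|$. This gives the prefactor directly. For the argument of $\rho$,
\begin{align*}
\frac{y-\beta}{\sigma} = \frac{\nu (y-\beta)}{|\beta|} = \sign(\beta)\left(\nu\frac{y}{\beta} - \nu\right).
\end{align*}
When $\beta>0$ this is exactly $\nu y/\beta - \nu$. When $\beta<0$ it is the negative of that quantity, and the symmetry of $\rho$, i.e.\ $\rho(-t)=\rho(t)$, removes the sign. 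Substituting both facts into the density of $Y$ yields \eqref{eq:likelihood.beta.nu}.

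The only point needing care is the sign bookkeeping: $|\beta|$ appears in the definition of $\nu$, whereas the displayed formula divides $y$ by $\beta$ itself. This is precisely where the symmetry assumption on $\rho$ is used. I would also remark that $\beta=0$ is a null set under any continuous prior, so excluding it does no harm for the Bayesian analysis that follows.
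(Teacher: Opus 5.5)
Your proof is correct and takes essentially the same route as the paper. Both substitute $\sigma = |\mu - A|/\nu$ into the location-scale density, use the symmetry of $\rho$ to replace $|\beta|$ by $\beta$ inside $\rho$, and shift by $A$; the paper reparameterizes before translating to $Y$ while you translate first, which changes nothing of substance, and your explicit exclusion of $\beta = 0$ makes precise an assumption the paper leaves implicit.
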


For the rest of this section, we will develop credible intervals for
$\beta$ given $Y$ which are approximate confidence intervals. Through
back-transforming, this results in credible intervals for $\mu$ given
$X$ which are approximate confidence intervals. That is, if
$[L(Y),U(Y)]$ is a $(1 - \alpha)100\%$ credible interval for $\beta$, then
the corresponding $(1 - \alpha)100\%$ credible interval for $\mu$ is just
$[L(X - A) + A,U(X - A) + A]$.

The invariance structure of the problem suggests a natural starting
point for the prior over $\beta$. For a known $\nu$, constructing
confidence interval \eqref{eq:ci.init.2} for $\beta$ can be seen as an
invariant decision problem using the multiplicative group of nonzero
reals. For $g \in \mathbb{R}\setminus\{0\}$, $Y \mapsto gY$,
$\beta \mapsto g\beta$, and
$Y/2 \pm \eta|Y| \mapsto g[Y/2 \pm \eta|Y|]$. There is a well-known
correspondence between invariance and Bayesian reasoning. Given an
invariant loss function, for an invariant decision problem under a
group acting transitively on the parameter, the best equivariant
decision rule coincides with the (generalized) Bayes rule under the
right invariant (generalized) Haar prior \citep[e.g.,][Section
6.6]{berger2013statistical}. All of this suggests that we begin by
placing the right invariant generalized Haar prior for the
multiplicative group on $\beta$ (which would be optimal in some sense
if $\nu$ were known). We will also assume $\beta$ and $\nu$ are
\emph{a priori} independent to get prior
\begin{align}
  \label{eq:prior.beta.nu}
  \pi(\beta,\nu) = |\beta|^{-1}\pi(\nu).
\end{align}
We will specify $\pi(\nu)$ later. But first, we have
Theorem~\ref{theo:post.ni1} which shows that the posterior
distribution of $1 / \beta$ given $\nu$ is in the same location-scale
family as $Y$ (with location $1 / Y$ and scale $1 / (\nu|Y|)$), and
the marginal posterior of $\nu$ is equal to the prior of $\nu$.

\begin{theorem}
  \label{theo:post.ni1}
  Given likelihood \eqref{eq:likelihood.beta.nu} and prior
  \eqref{eq:prior.beta.nu}, the posterior density of $\beta$ and $\nu$
  given $Y$ is
  \begin{align}
    \label{eq:ls.post.full}
    \pi(\beta,\nu|Y) &= \pi(\beta|\nu,Y)\pi(\nu|Y), \text{ where}\\
    \label{eq:ls.post}
    \pi(\beta|\nu,Y) &= \frac{\nu|Y|}{\beta^2}\rho\left(\nu\frac{Y}{\beta}-\nu\right), \text{ and}\\
    \pi(\nu|Y) &= \pi(\nu).
  \end{align}
\end{theorem}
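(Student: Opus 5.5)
The plan is to apply Bayes' theorem directly. The product of the likelihood in Lemma~\ref{lem:y.dense} and the prior \eqref{eq:prior.beta.nu} is
\begin{align*}
  f(Y|\beta,\nu)\pi(\beta,\nu) = \frac{\nu}{\beta^2}\rho\left(\nu\frac{Y}{\beta}-\nu\right)\pi(\nu).
\end{align*}
Two facts then give the whole theorem. First, for each fixed $\nu$ and $Y \neq 0$, the function of $\beta$ in \eqref{eq:ls.post} integrates to one over $\beta \in \mathbb{R}\setminus\{0\}$. Second, as a consequence, the marginal $\int f(Y|\beta,\nu)\pi(\beta,\nu)\,d\beta$ equals $\pi(\nu)/|Y|$. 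Its dependence on $\nu$ is therefore exactly $\pi(\nu)$, up to a factor free of $\nu$.

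For the first fact I would substitute $u = 1/\beta$, so that $du = -d\beta/\beta^2$, and the map sends $\mathbb{R}\setminus\{0\}$ bijectively onto itself, with orientation handled by the absolute value of the Jacobian. This gives
\begin{align*}
  \int \frac{\nu}{\beta^2}\rho\left(\frac{\nu Y}{\beta}-\nu\right)d\beta = \int \nu\,\rho(\nu Y u - \nu)\,du .
\end{align*}
Next I would substitute $t = \nu Y u - \nu$, which is valid for $Y\neq 0$ and $\nu>0$ and has $dt = \nu|Y|\,du$ in absolute value. The integral becomes $\frac{1}{|Y|}\int\rho(t)\,dt = 1/|Y|$. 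The same substitution shows that $1/\beta$ given $\nu, Y$ has a density of the form $\nu|Y|\rho(\nu|Y|(u - 1/Y))$ after using symmetry of $\rho$ to absorb the sign of $Y$. This is the location-scale interpretation remarked on before the theorem, although the theorem itself needs only the normalization.

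Combining the two facts, the joint posterior is
\begin{align*}
  \frac{\frac{\nu}{\beta^2}\rho\left(\nu\frac{Y}{\beta}-\nu\right)\pi(\nu)}{\int\!\!\int(\cdot)\,d\beta\,d\nu} = \frac{\frac{\nu}{\beta^2}\rho(\cdot)\pi(\nu)}{|Y|^{-1}\int\pi(\nu)\,d\nu}.
\end{align*}
If $\pi(\nu)$ is proper, this equals $\frac{\nu|Y|}{\beta^2}\rho(\cdot)\cdot\pi(\nu)$, which is \eqref{eq:ls.post} times $\pi(\nu)$. Because \eqref{eq:ls.post} is a normalized density in $\beta$, integrating out $\beta$ gives $\pi(\nu|Y)=\pi(\nu)$, and the factorization \eqref{eq:ls.post.full} follows.

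The main point needing care is the change of variables through $1/\beta$, which must be carried out on the punctured line with correct absolute-value Jacobians and with the sign of $Y$ handled by the symmetry of $\rho$. A secondary point is the role of propriety of $\pi(\nu)$. The measure-zero events $Y=0$ and $\beta=0$ are ignored. If $\pi(\nu)$ is improper, I would state the result as holding up to the same improper normalization, since the $\nu$-marginal of the posterior remains proportional to $\pi(\nu)$.
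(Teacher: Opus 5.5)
Your proof is correct and follows the paper's route: multiply the likelihood by the prior, then use the change of variables $a = 1/\beta$ to show that \eqref{eq:ls.post} is a normalized density in $\beta$. The paper phrases that step as recognizing $1/\beta$ as location-scale with location $1/Y$ and scale $1/(\nu|Y|)$, using symmetry of $\rho$, from which $\pi(\nu|Y)=\pi(\nu)$ follows. Your explicit computation of the $\nu$-marginal kernel $\pi(\nu)/|Y|$ and your remark on propriety of $\pi(\nu)$ only spell out what the paper leaves implicit in its ``$\propto$'' step.
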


We now need to choose a prior over $\nu$. Because of
Theorem~\ref{theo:post.ni1}, $\nu$ must have a proper prior, because
otherwise the posterior is not integrable. We will derive the prior
for $\nu$ based on the tail probabilities from
Theorem~\ref{theo:tail.prob}.
\begin{theorem}
  \label{theo:tail.prob}
  Let $\rho(\cdot)$ be differentiable at $\nu$. Then the
  conditional posterior tail probability of $\beta$ given $\nu$ and
  $Y$ is
  \begin{align}
    \label{eq:tail.prob}
    \Pr(\beta > z|\nu,Y) = \frac{|Y|}{z}\nu\rho\left(\nu\right) + \mathcal{O}\left(\frac{1}{z^2}\right).
  \end{align}
\end{theorem}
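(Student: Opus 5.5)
Starting from the conditional posterior \eqref{eq:ls.post} of Theorem~\ref{theo:post.ni1}, I would write the tail probability as an integral and substitute $u = 1/\beta$, which turns it into a small-interval integral near zero:
\begin{align*}
  \Pr(\beta > z|\nu,Y) = \int_z^\infty \frac{\nu|Y|}{\beta^2}\rho\left(\nu\frac{Y}{\beta}-\nu\right)d\beta
  = \int_0^{1/z} \nu|Y|\,\rho\left(\nu Y u - \nu\right)du .
\end{align*}
The Jacobian $d\beta/\beta^2 = -du$ is what makes this clean. It is consistent with the remark after Theorem~\ref{theo:post.ni1} that $1/\beta$ has a location-scale density with location $1/Y$ and scale $1/(\nu|Y|)$. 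The event $\{\beta>z\}$ with $z>0$ is exactly $\{0<1/\beta<1/z\}$.

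The second step is a further substitution $w = \nu Y u - \nu$, under which the probability becomes $\operatorname{sign}(Y)\int_{-\nu}^{-\nu+\nu Y/z}\rho(w)\,dw$. Equivalently, it is $|F(-\nu+\nu Y/z)-F(-\nu)|$ with $F$ the CDF of $\rho$. Then I would Taylor expand the integral in the small quantity $h = 1/z$.

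At $h=0$ the integral vanishes. Its derivative at $h=0$ is $\nu|Y|\rho(-\nu)$, and $\rho(-\nu)=\rho(\nu)$ by symmetry. So
\begin{align*}
  \Pr(\beta > z|\nu,Y) = \frac{|Y|}{z}\nu\rho(\nu) + R(1/z),
\end{align*}
where $R(h) = \nu|Y|\int_0^h[\rho(\nu Y u-\nu)-\rho(-\nu)]\,du$.

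The main obstacle is controlling the remainder to be $\mathcal{O}(1/z^2)$ using only differentiability of $\rho$ at $\nu$. Differentiability at $\nu$ gives differentiability at $-\nu$ by symmetry. It therefore yields $\rho(-\nu+t)-\rho(-\nu) = \rho'(-\nu)t + o(t)$, which is bounded by $C|t|$ for $|t|$ small. Taking $t = \nu Y u$, the integrand of $R$ is bounded by $C\nu|Y|u$ for $u\le h$ small. Integrating gives $|R(h)|\le C\nu^2Y^2h^2/2$, i.e.\ $\mathcal{O}(1/z^2)$ as $z\to\infty$ with $\nu$ and $Y$ fixed.

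I would note explicitly that the statement is asymptotic in $z\to\infty$, i.e.\ $z>0$ large. The case $Y<0$ is handled by the absolute value, since the sign flips the orientation of the $w$-interval but not the magnitude.
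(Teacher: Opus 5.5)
Your proposal is correct and follows essentially the same route as the paper: substitute $t = 1/\beta$ to write the tail as $\int_0^{1/z}\nu|Y|\rho(t\nu Y-\nu)\,\mathrm{d}t$, expand to first order in $1/z$, and use the symmetry $\rho(-\nu)=\rho(\nu)$. Your remainder bound is more explicit than the paper's one-line appeal to Taylor's theorem: you use only $|\rho(-\nu+s)-\rho(-\nu)|\le C|s|$ for small $|s|$, so the argument genuinely needs nothing beyond differentiability at the single point $\nu$.
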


Theorem~\ref{theo:tail.prob} gives the tail probability conditional on
$\nu$. For some location-scale families, the marginal tail
probability is the integral over this conditional tail probability for
\emph{any} choice of proper prior $\pi(\cdot)$ over $\nu$,
\begin{align}
  \label{eq:marg.tail.prob.text}
\Pr(\beta > z|Y) &= \frac{|Y|}{z}\int_{0}^{\infty}\nu\rho\left(\nu\right)\pi(\nu)\mathrm{d}\nu + \mathcal{O}\left(\frac{1}{z^2}\right).
\end{align}
Sufficient regularity conditions on $\rho(\cdot)$ for
\eqref{eq:marg.tail.prob.text} to hold for any proper prior
$\pi(\cdot)$ are provided in Appendix~\ref{sec:marg.tail}. Namely, for
densities that are differentiable on $\mathbb{R}$, sufficient
conditions are
\begin{align}
  \label{eq:reg.cond.cont}
  \sup_{u \geq 0} u\rho(u) < \infty, \text{ and } \sup_{u \geq 0} u^2|\rho'(u)| < \infty.
\end{align}
In particular, if $\rho(\cdot)$ is the standard normal density, it
satisfies \eqref{eq:reg.cond.cont} and any proper prior $\pi(\cdot)$
results in marginal tail approximation \eqref{eq:marg.tail.prob.text}
(Theorem~\ref{theo:normal.reg.cond}).

However, if we restrict $\pi(\cdot)$ to be a point mass prior at $\nu_0$, then
\eqref{eq:marg.tail.prob.text} holds under a much weaker condition:
$\rho(\cdot)$ need only be differentiable at the single
point $\nu_0$, rather than satisfying the global conditions of
Appendix~\ref{sec:marg.tail} (Corollary~\ref{cor:pointmass.marg}).
\begin{corollary}
  \label{cor:pointmass.marg}
  Let $\pi(\nu) = \delta_{\nu_0}(\nu)$, a point mass distribution at
  $\nu_0 > 0$, and let $\rho(\cdot)$ be differentiable at
  $\nu_0$. Then
  \begin{align}
    \Pr(\beta > z|Y) = \frac{|Y|}{z}\nu_0\rho\left(\nu_0\right) + \mathcal{O}\left(\frac{1}{z^2}\right).
  \end{align}
\end{corollary}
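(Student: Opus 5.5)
The corollary should follow directly from Theorem~\ref{theo:tail.prob} once we write the marginal tail probability as a mixture over $\nu$ and use the fact that a point mass removes any need to interchange limits and integrals. By Theorem~\ref{theo:post.ni1}, the joint posterior factors as $\pi(\beta|\nu,Y)\pi(\nu|Y)$ with $\pi(\nu|Y)=\pi(\nu)$. Hence for any prior on $\nu$,
\begin{align*}
  \Pr(\beta > z|Y) = \int_0^\infty \Pr(\beta>z|\nu,Y)\,\pi(\mathrm{d}\nu).
\end{align*}
With $\pi = \delta_{\nu_0}$ the posterior of $\nu$ is again $\delta_{\nu_0}$. The integral therefore collapses to evaluation at the single point, giving $\Pr(\beta>z|Y) = \Pr(\beta>z|\nu_0,Y)$.

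I will state this step carefully, since the prior is now a measure rather than a density. The prior \eqref{eq:prior.beta.nu} becomes $|\beta|^{-1}\,\mathrm{d}\beta\,\delta_{\nu_0}(\mathrm{d}\nu)$. The posterior is obtained by multiplying by the likelihood \eqref{eq:likelihood.beta.nu} and normalizing. Theorem~\ref{theo:post.ni1} shows that for each fixed $\nu$ the $\beta$-integral of likelihood times $|\beta|^{-1}$ is a constant independent of $\nu$. With $\nu$ fixed at $\nu_0$, the posterior is therefore exactly the conditional density \eqref{eq:ls.post} at $\nu=\nu_0$, and no normalization issue arises.

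It then remains to apply Theorem~\ref{theo:tail.prob} at $\nu=\nu_0$. The only hypothesis it needs is that $\rho(\cdot)$ is differentiable at $\nu_0$, which is exactly what the corollary assumes. This yields $\Pr(\beta>z|\nu_0,Y) = \frac{|Y|}{z}\nu_0\rho(\nu_0)+\mathcal{O}(1/z^2)$, which is the claim.

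I expect essentially no obstacle here. The one point worth remarking on is why the global conditions of Appendix~\ref{sec:marg.tail} are not needed. For a general proper prior one must show that the $\mathcal{O}(1/z^2)$ remainder in \eqref{eq:tail.prob} is uniform enough in $\nu$ to integrate against $\pi$. That step is where conditions like \eqref{eq:reg.cond.cont} enter. Under a point mass no such uniformity or dominated-convergence argument is required, because the remainder is evaluated at only the single value $\nu_0$.
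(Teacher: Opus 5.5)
Your proposal is correct and matches the paper's proof: under the point-mass prior, the marginal posterior tail probability equals the conditional tail probability at $\nu_0$, so the claim follows immediately from Theorem~\ref{theo:tail.prob}. Your extra remarks are accurate but go beyond the paper's one-line argument. These are that the $\beta$-normalizing constant is independent of $\nu$, and that the uniformity conditions of Appendix~\ref{sec:marg.tail} are not needed here.
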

\begin{proof}
  Under a point mass prior the marginal tail probability equals the
  conditional tail probability, so the result is immediate from
  Theorem~\ref{theo:tail.prob}.
\end{proof}

Based on Theorem~\ref{theo:tail.prob}, we can obtain the fattest tails
for the marginal posterior distribution of $\beta$ by having our prior
for $\nu$ be a point mass at the value that maximizes
$\nu\rho\left(\nu\right)$. At least, it will be the fattest tails
among the class of point mass priors over $\nu$
(Corollary~\ref{cor:pointmass.marg}). But for some $\rho(\cdot)$ (like
the normal), it will be the fattest tails over the entire class of
proper priors over $\nu$ (Appendix~\ref{sec:marg.tail}).  This
indicates that the marginal posterior tails for $\beta$, using this
point mass prior on $\nu$, are
\begin{align}
  \label{eq:post.tails}
  \Pr(\beta > z|Y) = \frac{|Y|}{z}\max_{\nu>0}\nu\rho\left(\nu\right) + \mathcal{O}\left(\frac{1}{z^2}\right).
\end{align}
This leads us to the main theorem of this section
(Theorem~\ref{theo:valid.conf.post}).
\begin{theorem}
  \label{theo:valid.conf.post}
  Assume we have likelihood \eqref{eq:likelihood.beta.nu} and prior
  \begin{align}
    \label{eq:prior.best}
    \pi(\beta,\nu) &= |\beta|^{-1}\delta_{\tilde{\nu}}(\nu), \text{ where}\\
    \label{eq:prior.nu.tilde}
    \tilde{\nu} &= \argmax_{\nu>0}\nu\rho\left(\nu\right),
  \end{align}
  and $\delta_{\tilde{\nu}}(\nu)$ is a point mass at
  $\tilde{\nu}$. Furthermore, assume that $\rho(\cdot)$ is
  differentiable at $\tilde{\nu}$.  Then the equal tailed
  $(1 - \alpha)100\%$ credible interval for $\beta$ has confidence
  error probability $\alpha + \mathcal{O}(\alpha^2)$.
\end{theorem}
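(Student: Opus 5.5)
We will compare the equal-tailed credible interval with interval \eqref{eq:ci.init.2}, rewritten in the $\beta$ scale as $Y/2 \pm \eta|Y|$, whose exact coverage we can compute. First, by Corollary~\ref{cor:pointmass.marg} with $\nu_0=\tilde\nu$, and by the symmetry $\beta\mapsto-\beta$, $Y\mapsto -Y$ of the posterior \eqref{eq:ls.post}, both posterior tails satisfy
\begin{align*}
\Pr(\beta>z\mid Y)=\frac{|Y|c}{z}+\mathcal{O}(z^{-2}),\qquad \Pr(\beta<-z\mid Y)=\frac{|Y|c}{z}+\mathcal{O}(z^{-2}),
\end{align*}
where $c=\tilde\nu\rho(\tilde\nu)$. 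To see that the $\mathcal{O}(z^{-2})$ term scales with $Y$, we use the scale equivariance in Theorem~\ref{theo:post.ni1}: $\beta/|Y|$ has a posterior that depends only on $\sign(Y)$. So the $1-\alpha/2$ and $\alpha/2$ posterior quantiles have the form $|Y|q_\pm(\alpha)$. Inverting the tail expansion gives $q_\pm(\alpha)=\pm 2c/\alpha+\mathcal{O}(1)$. Hence the credible interval is $[\,Y\text{-scaled}\,]=\pm(2c/\alpha)|Y|+\mathcal{O}(|Y|)$, i.e.\ of the form $Y/2\pm\eta_\alpha|Y|$ up to an $\mathcal{O}(|Y|)$ perturbation of each endpoint, with $\eta_\alpha\approx 2c/\alpha$.

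Next we compute the frequentist non-coverage. The interval is $|Y|\,[a(\alpha),b(\alpha)]$, with $a=-2c/\alpha+\mathcal{O}(1)$ and $b=2c/\alpha+\mathcal{O}(1)$ (allowing the constants to depend on $\sign Y$). Using the density from Lemma~\ref{lem:y.dense}, put $W=Y/\beta$, so $W\sim$ location $1$, scale $1/\nu$. Take $\beta>0$ without loss of generality. Non-coverage occurs when $\beta>b|Y|$, i.e.\ $|W|<1/b$, or when $\beta<a|Y|$, which is impossible for $a<0<\beta$. Thus
\begin{align*}
\Pr(|W|<1/b)=\int_{-1/b}^{1/b}\nu\rho(\nu w-\nu)\,\mathrm{d}w=\frac{2\nu\rho(\nu)}{b}+\mathcal{O}(b^{-2}).
\end{align*}
The expansion uses differentiability of $\rho$ at $\nu$. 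The sup over $\nu$ of the leading term is $2c/b=\alpha+\mathcal{O}(\alpha^2)$, attained at $\nu=\tilde\nu$. This is the key point: the maximizing choice of $\tilde\nu$ makes the posterior tail constant equal the worst-case frequentist one, so the confidence error probability is $\sup_\nu \Pr(\text{non-coverage})=\alpha+\mathcal{O}(\alpha^2)$.

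\textbf{Main obstacle.} The remainder $\mathcal{O}(b^{-2})$ must be uniform in $\nu$, otherwise the supremum could be spoiled. Only differentiability at $\tilde\nu$ is assumed, so uniformity is not automatic. We would handle it by writing the exact probability as $\int_{-1/b}^{1/b}\nu\rho(\nu(w-1))\,\mathrm{d}w$ and bounding it by $(2/b)\sup_{|t|\le 1/b}\nu\rho(\nu(1-t))$. Substituting $s=\nu(1-t)$, this is at most $(2/b)\,(1-1/b)^{-1}\sup_s s\rho(s)=(2c/b)(1+\mathcal{O}(1/b))$, uniformly in $\nu$, using symmetry of $\rho$ for the negative-$w$ side. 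This gives an upper bound $\alpha+\mathcal{O}(\alpha^2)$. The matching lower bound follows from the pointwise expansion at $\nu=\tilde\nu$. We expect the care here to lie in tracking that the $\mathcal{O}(1)$ quantile corrections stay bounded, which again follows from the $\sign(Y)$-only dependence of the standardized posterior.
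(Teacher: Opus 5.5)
Your proof is correct, but it takes a different route from the paper's.

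\textbf{The paper's route.} The paper argues by quantile matching. It takes the expansion $\alpha = \frac{2}{\eta}\tilde\nu\rho(\tilde\nu) + \mathcal{O}(\eta^{-2})$ of the worst-case error of $Y/2 \pm \eta|Y|$ from Appendix~\ref{sec:bm.deriv}. From this it reads off the tails $\frac{|Y|}{z}\tilde\nu\rho(\tilde\nu) + \mathcal{O}(z^{-2})$ of the confidence distribution of that interval. It then notes that these coincide with the posterior tails of Corollary~\ref{cor:pointmass.marg}, so the posterior tail areas at the confidence limits are $\alpha/2 + \mathcal{O}(\alpha^2)$.

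\textbf{Your route.} You bypass interval \eqref{eq:ci.init.2} and its confidence distribution altogether. You use the scale equivariance from Theorem~\ref{theo:post.ni1} to write the credible interval as $|Y|[a_\pm(\alpha), b_\pm(\alpha)]$ with $b_\pm = 2c/\alpha + \mathcal{O}(1)$. You then compute the frequentist non-coverage of that interval directly through the pivot $W = Y/\beta$. The upper bound comes from the uniform estimate $\nu\rho(\nu(1-t)) = s\rho(s)/(1-t) \le c/(1-1/b)$, and the matching lower bound comes from expanding at $\nu = \tilde\nu$.

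\textbf{What each buys.} The paper's argument is shorter and gives the interpretation that section is after: the credible interval essentially reproduces \eqref{eq:ci.init.2}. Yours actually controls the supremum over $\nu$, which the paper leaves implicit. Its maximization of the leading term in Appendix~\ref{sec:bm.deriv} needs the $\mathcal{O}(\eta^{-2})$ remainder to be uniform in $\nu$. It also passes from agreement of tail areas to the error probability of the credible interval without computing the latter. Your route also shows that the bound in terms of $b$ needs no smoothness of $\rho$. Differentiability at $\tilde\nu$ enters only through the quantile expansion $b_\pm = 2c/\alpha + \mathcal{O}(1)$ and through the lower bound.

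\textbf{Small points to tidy up.}
\begin{itemize}
\item Because $b$ depends on $\sign(Y)$, the non-coverage event for $\beta>0$ is $\{0<W<1/b_+\}\cup\{-1/b_-<W<0\}$, not $\{|W|<1/b\}$. This is harmless, since each piece contributes $c/b_\pm + \mathcal{O}(\alpha^2)$.
\item The reduction to $\beta>0$ should be justified explicitly, via $I(-Y) = -I(Y)$ together with the symmetry of $\rho$.
\item Symmetry of $\rho$ is needed only to rewrite $\rho(\nu(w-1))$ as $\rho(\nu(1-w))$. For $|w|\le 1/b<1$ the substituted $s$ is positive on both sides of $0$ without it.
\end{itemize}
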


\begin{enumerate}[label=\textit{Note \arabic*.}, leftmargin=*, wide=0pt]
\item Recall the ``mild regularity condition'' on $\rho(\cdot)$ that
  we said is required to build credible intervals that are
  asymptotically valid confidence intervals. That condition is that
  $\rho(\cdot)$ is differentiable at $\tilde{\nu}$. To demonstrate the
  issue that can occur when $\rho(\cdot)$ is discontinuous at
  $\tilde{\nu}$, consider the uniform density with support on
  $[-0.5, 0.5]$, which is discontinuous at $-\tilde{\nu} = -0.5$ and
  $\tilde{\nu} = 0.5$. One can show that, using prior
  \eqref{eq:prior.best}, the posterior density is
  \begin{align}
    \pi(\beta|Y) =
    \begin{cases}
      \frac{|Y|}{2\beta^2}1\left(\beta \geq \frac{Y}{2}\right) &\text{ if } Y > 0\\
      \frac{|Y|}{2\beta^2}1\left(\beta \leq \frac{Y}{2}\right) &\text{ if } Y < 0.
    \end{cases}
  \end{align}
  If $Y > 0$, the upper limit of the credible interval is the same as
  the upper limit of the confidence interval \eqref{eq:ci.init.2} (up
  to order $\mathcal{O}(\alpha^2)$), but the lower limits
  disagree. Likewise, if $Y < 0$ then the lower limits agree (up to
  order $\mathcal{O}(\alpha^2)$) but the upper limits disagree. The
  issue arises because the application of Taylor's theorem in the
  proof of Theorem~\ref{theo:valid.conf.post} requires
  differentiability at $\tilde{\nu}$.
\item We can actually relax the differentiability requirement at
  $\tilde{\nu}$ in Theorem~\ref{theo:valid.conf.post} to one of
  continuity at $\tilde{\nu}$. However, this results in the remainder
  being $\mathrm{o}\left(\alpha\right)$ instead of
  $\mathcal{O}\left(\alpha^2\right)$ (use the Peano form of the
  remainder for the first-order instead of the second-order Taylor
  series). We need differentiability to get a quadratic-order
  approximation. Of course, this relaxation does not save the uniform
  distribution.
\item The regularity conditions in Appendix~\ref{sec:marg.tail} are
  not necessary for Theorem~\ref{theo:valid.conf.post}, which rests
  only on Corollary~\ref{cor:pointmass.marg} and so holds whenever
  $\rho(\cdot)$ is differentiable at $\tilde{\nu}$. What is at issue
  in Appendix~\ref{sec:marg.tail} is instead the ability to interpret
  prior \eqref{eq:prior.best} as the one producing the fattest
  posterior tails among the class of proper priors. This is a useful
  interpretation, but not one the theorem requires.
\end{enumerate}

\subsection{Credible intervals: normal family}
\label{sec:normal.family}

Let us apply the results from Section~\ref{sec:gen.case} to the normal
case \eqref{eq:normal.obs}. Since
$\nu\rho(\nu) = (2\pi)^{-1/2}\nu\exp(-\nu^2/2)$ is maximized at
$\nu = 1$, we \emph{a priori} assume $\sigma^2 = (\mu - A)^2$ and we
use the following model
\begin{align}
  \label{eq:x.coef.is.1}
  X &\sim \N(\mu, (\mu - A)^2),\\
  \label{eq:mu.prior.coef.is.1}
  \pi(\mu) &= |\mu - A|^{-1}.
\end{align}
Building credible intervals assuming
\eqref{eq:x.coef.is.1}--\eqref{eq:mu.prior.coef.is.1} results in valid
confidence intervals (asymptotically in the confidence level) for the
model $X \sim \N(\mu, \sigma^2)$. Though, again it is easier to work
with $Y = X - A$ and $\beta = \mu - A$, so we can write
\eqref{eq:x.coef.is.1}--\eqref{eq:mu.prior.coef.is.1} as
\begin{align}
  \label{eq:y.coef.is.1}
  Y &\sim \N(\beta, \beta^2),\\
  \label{eq:beta.prior.coef.is.1}
  \pi(\beta) &= |\beta|^{-1},
\end{align}
and we note that our results carry through for $X$ and $\mu$.

The posterior distribution for $\beta$, assuming
\eqref{eq:y.coef.is.1}--\eqref{eq:beta.prior.coef.is.1}, is ``inverse
normal'' \citep{robert1991generalized} (which is different from the
inverse Gaussian), with inverse mean $1/Y$ and inverse variance
$1/Y^2$. Figure~\ref{fig:post.dens.graphic} shows the posterior density
on the original scale of $\mu$ given $X$, for the case $A > X$.
Reflecting the figure horizontally about $(X + A)/2$ gives the
posterior density when $A < X$. The posterior distribution of $(\mu -
A)/|X - A|$ is a fixed distribution (inverse normal with inverse mean
and inverse variance $1$), so Figure~\ref{fig:post.dens.graphic}
applies to any value of $A > X$. The distribution has two modes: using
\citet{robert1991generalized}, the larger is at $(X + A)/2$ and the
smaller is at $A + |X - A|$. Exactly half of the posterior area lies
between $X$ and $A$, about 16\% lies above $A$, and about 34\% lies
below $X$.

\citet{robert1991generalized} does not list the CDF and quantile
functions of this distribution, but they are easy to derive and we do
so in Appendix~\ref{sec:inv.norm.properties} (along with the inverse
of any continuous location-scale family). Using
\eqref{eq:quant.invnorm}, the posterior median of $\beta$ is
\begin{align}
\frac{Y}{1 - \Phi^{-1}\left(\Phi(1) - 1/2\right)} \approx 0.71 Y.
\end{align}
Using \eqref{eq:quant.invnorm} to obtain $\alpha/2$ and $1 - \alpha/2$
quantiles, for $\alpha < 2(1 - \Phi(1)) \approx 0.3173$, we obtain a
$(1 - \alpha)100\%$ credible interval for $\beta$. If $Y>0$, this
interval is
\begin{align}
  \label{eq:normal.cred.interval}
  \left(\frac{Y}{1 - \Phi^{-1}\left(\Phi(1) + \frac{\alpha}{2}\right)}, \frac{Y}{1 - \Phi^{-1}\left(\Phi(1) - \frac{\alpha}{2}\right)}\right),
\end{align}
and the bounds swap for $Y < 0$. For $\alpha = 0.05$, this corresponds
to about $(-9.15Y,10.15Y)$, or in the original variables
$(-9.15(X-A) + A,10.15(X - A) + A)$.

Interval \eqref{eq:normal.cred.interval} is scale-sign invariant
\citep{portnoy2019invariance}, and so we can use a corrected Theorem
1.1 of \citet{portnoy2019invariance} to calculate the actual coverage
probability of our credible interval. Suppose we have an interval of
the form $(c_1Y,c_2Y)$, then Theorem 1.1 of
\citet{portnoy2019invariance} states that the level of the interval
for a given $\nu$ is
\begin{align}
    \label{eq:bayes.true.level}
  \text{True Level} =
  \begin{cases}
    \Phi(\nu(1 - 1/c_2)) + 1 - \Phi(\nu(1 - 1 / c_1)) & \text{ if } c_1 \leq 0 \text{ and } c_2 \geq 0\\
    \Phi(\nu(1 - 1/c_2)) - \Phi(\nu(1 - 1 / c_1)) & \text{ if } c_1 > 0 \text{ and } c_2 > 0.
  \end{cases}
\end{align}
Note that Theorem 1.1 of \citet{portnoy2019invariance} has a typo ---
it has $1+1/c_1$ everywhere instead of $1-1/c_1$. In our case, based
on \eqref{eq:normal.cred.interval}, we have, for
$\alpha < 2(1 - \Phi(1)) \approx 0.3173$,
\begin{align}
  c_1 = \left[1 - \Phi^{-1}\left(\Phi(1) + \frac{\alpha}{2}\right)\right]^{-1} \text{ and }  c_2 = \left[1 - \Phi^{-1}\left(\Phi(1) - \frac{\alpha}{2}\right)\right]^{-1}.
\end{align}

Table~\ref{tab:compare.bayes} compares the intervals
\eqref{eq:ci.init.2} and our new Bayesian credible intervals for
various levels of $\alpha$. At $\alpha \leq 0.1$, they are within 2
decimal places of each other.  Indeed the 95\% credible interval using
the Bayes procedure is at least 2 decimal points within the best
possible confidence interval (in terms of a minimax criterion)
\citep{portnoy2019invariance}. We also numerically minimized
\eqref{eq:bayes.true.level} over $\nu$ to obtain the worst error
probability of the Bayesian credible interval. For $\alpha = 0.05$,
the error probability of the Bayesian credible interval is within
$10^{-6}$ of 0.05 which, for all practical purposes (though not
mathematically exactly), means that it maintains the proper confidence
level.

\subsection{Credible intervals: Cauchy family}
\label{sec:cauchy.family}

Suppose $\rho(\cdot)$ is standard Cauchy
\begin{align}
  \rho(x) = \frac{1}{\pi(1 + x^2)}.
\end{align}
In this section, we show that the $n = 1$ posterior credible intervals
using prior \eqref{eq:prior.best} are \emph{exact} (not asymptotic)
confidence intervals for any confidence level at least $1/2$
(Corollary~\ref{cor:cauchy.exact}). We will work with the transformed
problem $Y = X - A$ and $\beta = \mu - A$, noting that the results
immediately carry over to $X$ and $\mu$. We will first derive the
confidence distribution of $\beta$ implied by the interval
$\frac{Y}{2} \pm \eta |Y|$
(Theorem~\ref{theo:confidence.dist.cauchy}), then show that it exactly
matches the marginal posterior distribution of $\beta$ given $Y$
(Theorem~\ref{theo:posterior.dist.cauchy}). Namely, letting
$Z = \frac{\beta - Y/2}{|Y|/2}$, then conditional on $Y$, we show that
$Z$ is standard Cauchy for both the posterior and for the confidence
distribution (for $|Z| \geq 1$). The confidence distribution does not
exist for $|\eta| < 1/2$ ($|Z| < 1$), so the results hold for
$\alpha \leq 1/2$ (the typical case).

\begin{theorem}
  \label{theo:confidence.dist.cauchy}
  Let $Y \sim \Cauchy(\beta,|\beta|/\nu)$.  Let $\beta$ follow the
  confidence distribution implied by the $(1-\alpha)100\%$ confidence
  interval $Y/2 \pm \eta |Y|$, where $\eta \geq \frac{1}{2}$ is
  defined as in Appendix~\ref{sec:bm.deriv} to control the confidence
  error. Let $Z = \frac{\beta - Y/2}{|Y|/2}$ (here, $\beta$ is random
  and $Y$ is fixed). Then the confidence distribution of $Z$ agrees
  with the $\Cauchy(0,1)$ law for $|z| \geq 1$.
\end{theorem}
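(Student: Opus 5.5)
The plan is to compute the confidence distribution explicitly by reading off the tail probability $\alpha/2$ as a function of the endpoint. The interval $Y/2 \pm \eta|Y|$ is scale-sign invariant. For $Y>0$ it equals $(c_1Y, c_2Y)$ with $c_1 = 1/2-\eta \le 0$ and $c_2 = 1/2+\eta > 0$, and the roles of the endpoints swap for $Y<0$. So the Cauchy analogue of \eqref{eq:bayes.true.level}, with $\Phi$ replaced by the standard Cauchy CDF $F(w) = 1/2 + \arctan(w)/\pi$, gives the error probability at a fixed $\nu$ (here $\nu = |\beta|/\text{scale}$, as in Lemma~\ref{lem:y.dense}). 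The argument behind that formula only uses $Y = \beta(1+W/\nu)$ with $W$ symmetric, so it carries over verbatim.

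Writing $a = 1 - 1/c_2 = (2\eta-1)/(2\eta+1) \in [0,1)$, one checks that $1 - 1/c_1 = (2\eta+1)/(2\eta-1) = 1/a$. The error probability at a given $\nu$ is therefore
\begin{align*}
  e(\nu) = F(\nu/a) - F(\nu a) = \frac{1}{\pi}\left[\arctan(\nu/a) - \arctan(\nu a)\right].
\end{align*}

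The next step is the step I expect to be the main obstacle: identifying the worst case over $\nu$, which is what Appendix~\ref{sec:bm.deriv} uses to define $\eta$. I will differentiate $e(\nu)$ and show that the derivative vanishes only where $\nu^2 = 1$. Indeed, $\frac{1/a}{1+\nu^2/a^2} = \frac{a}{1+\nu^2a^2}$ reduces to $(1-a^2)(1-\nu^2)=0$, so the maximum is at $\nu=1$. Since $\arctan(1/a) = \pi/2 - \arctan a$, this yields
\begin{align*}
  \sup_{\nu>0} e(\nu) = \frac{1}{2} - \frac{2}{\pi}\arctan(a).
\end{align*}
Hence $\eta$ is determined by $\alpha = 1/2 - (2/\pi)\arctan\bigl((2\eta-1)/(2\eta+1)\bigr)$. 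This map is a continuous, strictly decreasing bijection from $\eta\in[1/2,\infty)$ onto $\alpha\in(0,1/2]$, which is what makes the implied confidence distribution well defined only on that range. One point needs care here. If the appendix defines $\eta$ via the bound at $\nu=1$ rather than the exact supremum, the two definitions coincide by this calculation, so I would simply verify that the appendix's $\eta$ matches.

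Finally I translate to $Z$. By definition of the confidence distribution, the $1-\alpha/2$ quantile of $\beta$ is $Y/2 + \eta|Y|$, that is, $Z = 2\eta$. So for $z = 2\eta \ge 1$ we have $\Pr(Z > z) = \alpha/2$ with $a = (z-1)/(z+1)$. I then use the identity $\arctan\bigl((z-1)/(z+1)\bigr) = \arctan(z) - \pi/4$, valid for $z > -1$, which follows from the tangent subtraction formula. It gives
\begin{align*}
  \Pr(Z>z) = \frac{1}{4} - \frac{1}{\pi}\left(\arctan z - \frac{\pi}{4}\right) = \frac{1}{2} - \frac{\arctan z}{\pi},
\end{align*}
which is exactly the $\Cauchy(0,1)$ upper tail. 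The lower tail follows in the same way, because the $\alpha/2$ quantile is $Y/2 - \eta|Y|$, i.e. $Z = -2\eta$, and the interval is symmetric. This gives $\Pr(Z < -z) = 1/2 - \arctan(z)/\pi$ for $z \ge 1$. Thus the confidence distribution of $Z$ agrees with $\Cauchy(0,1)$ on $|z|\ge 1$.
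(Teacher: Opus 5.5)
Your proposal is correct and follows essentially the same route as the paper: write the per-$\nu$ error as $\frac{1}{\pi}\left[\arctan(\nu/a)-\arctan(\nu a)\right]$, show the derivative vanishes only at $\nu=1$, evaluate $\alpha$ there via arctan identities, and read off the tails of $Z$ at $\pm 2\eta$ using the symmetry of the interval. The differences are cosmetic. You obtain the per-$\nu$ error from the scale-sign-invariance (Portnoy-type) formula rather than citing Blachman's Eq.~(13), you use the reciprocal parametrization of $a$, and you use a single tangent-subtraction identity in place of the paper's three arctan identities.
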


\begin{theorem}
  \label{theo:posterior.dist.cauchy}
  Let $Y \sim \Cauchy(\beta,|\beta|/\nu)$.  Let $\beta$ follow the
  marginal posterior distribution given prior
  \eqref{eq:prior.best}. Let $Z = \frac{\beta - Y/2}{|Y|/2}$. Then
  $Z | Y \sim \Cauchy(0,1)$.
\end{theorem}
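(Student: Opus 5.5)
The plan is to compute the marginal posterior of $\beta$ in closed form using Theorem~\ref{theo:post.ni1}, and then change variables to $Z$. I expect every step to be elementary algebra.

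\emph{Step 1: locate the point mass.} For the standard Cauchy density, $\nu\rho(\nu) = \nu/\{\pi(1+\nu^2)\}$. Differentiating gives a numerator proportional to $1-\nu^2$, so on $\nu>0$ the unique maximizer is $\tilde{\nu} = 1$. The density $\rho(\cdot)$ is smooth, so the differentiability hypothesis of prior \eqref{eq:prior.best} holds. The prior is therefore $\pi(\beta,\nu) = |\beta|^{-1}\delta_1(\nu)$. Because $\nu$ is a point mass, the marginal posterior of $\beta$ equals the conditional posterior \eqref{eq:ls.post} evaluated at $\nu=1$.

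\emph{Step 2: compute the posterior density of $\beta$.} Substituting $\nu=1$ and the Cauchy $\rho$ into \eqref{eq:ls.post} gives
\begin{align}
  \pi(\beta|Y) = \frac{|Y|}{\beta^2}\cdot\frac{1}{\pi\left(1 + (Y/\beta - 1)^2\right)} = \frac{|Y|}{\pi\left(\beta^2 + (Y-\beta)^2\right)}.
\end{align}
The key observation is that the singular factor $\beta^{-2}$ cancels. Completing the square then gives $\beta^2 + (Y-\beta)^2 = 2(\beta - Y/2)^2 + Y^2/2$. Hence
\begin{align}
  \pi(\beta|Y) = \frac{2}{\pi|Y|}\left[1 + \left(\frac{\beta - Y/2}{|Y|/2}\right)^2\right]^{-1},
\end{align}
which is the $\Cauchy(Y/2, |Y|/2)$ density in $\beta$.

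\emph{Step 3: change variables to $Z$.} With $Z = (\beta - Y/2)/(|Y|/2)$, the Jacobian is $\mathrm{d}\beta/\mathrm{d}z = |Y|/2$. Multiplying the density from Step 2 by this Jacobian gives $1/\{\pi(1+z^2)\}$ on all of $\mathbb{R}$, so $Z|Y \sim \Cauchy(0,1)$.

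The only place needing care is the algebra in Step 2: the cancellation of $\beta^{-2}$ and the completion of the square. Nothing asymptotic is involved. Unlike the proof of Theorem~\ref{theo:valid.conf.post}, no tail expansion from Theorem~\ref{theo:tail.prob} is needed, since the density is obtained exactly. Combined with Theorem~\ref{theo:confidence.dist.cauchy}, this yields the exactness of the credible intervals for $\alpha \le 1/2$.
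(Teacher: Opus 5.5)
Your proof is correct, but it takes a different route from the paper's.

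The paper never writes down the posterior density of $\beta$. It takes from the proof of Theorem~\ref{theo:post.ni1} that, with $\tilde\nu = 1$, the posterior of $1/\beta$ is $\Cauchy(1/Y, 1/|Y|)$. It writes this as the complex parameter $\frac{1}{Y} + \frac{1}{Y}i$ in the notation of \citet{mccullagh1992conditional}. It then invokes McCullagh's result that the Cauchy family is closed under reciprocals via $\theta \mapsto 1/\theta$. This gives $\beta$ the parameter $Y/(1+i) = Y/2 - (Y/2)i$, i.e.\ $\beta \mid Y \sim \Cauchy(Y/2, |Y|/2)$.

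You instead substitute directly into \eqref{eq:ls.post}. You observe that the $\beta^{-2}$ factor cancels, leaving $|Y|/\{\pi(\beta^2 + (Y-\beta)^2)\}$, and you complete the square. Your algebra checks out.

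Your argument is self-contained and checkable by hand. You also verify $\tilde\nu = 1$ explicitly, which the paper leaves implicit. The paper's argument is shorter and shows structurally why the answer is again Cauchy: the family is closed under M\"obius maps. The cost is reliance on an external result, and on the convention that $\Cauchy(\theta)$ and $\Cauchy(\bar\theta)$ coincide, which is needed when $Y<0$.

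One small point: the differentiability of $\rho$ at $\tilde\nu$ is a hypothesis of Theorem~\ref{theo:valid.conf.post}, not of prior \eqref{eq:prior.best}. Your exact computation does not need it at all.
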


\begin{corollary}
  \label{cor:cauchy.exact}
  Let $Y \sim \Cauchy(\beta,|\beta|/\nu)$. Let the prior for $\beta$
  be \eqref{eq:prior.best}. Then every equal tailed
  $(1 - \alpha)100\%$ credible interval for $\beta$ is an \emph{exact}
  $(1 - \alpha)100\%$ confidence interval for $\alpha \leq 1/2$.
\end{corollary}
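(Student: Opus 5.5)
The plan is to combine Theorems~\ref{theo:confidence.dist.cauchy} and~\ref{theo:posterior.dist.cauchy} and work entirely on the standardized scale $Z = \frac{\beta - Y/2}{|Y|/2}$. Since $Y$ is fixed when we condition, the map $\beta \mapsto Z$ is strictly increasing and affine, so quantiles of $\beta$ correspond exactly to quantiles of $Z$. An equal tailed $(1-\alpha)100\%$ credible interval for $\beta$ is therefore $Y/2 \pm (|Y|/2) q_{1-\alpha/2}$. Here $q_{1-\alpha/2}$ is the $1-\alpha/2$ quantile of the posterior law of $Z$, which by Theorem~\ref{theo:posterior.dist.cauchy} is $\Cauchy(0,1)$. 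By symmetry of the Cauchy law the lower endpoint uses $-q_{1-\alpha/2}$. So the credible interval has exactly the form $Y/2 \pm \eta_B |Y|$ with $\eta_B = q_{1-\alpha/2}/2$, where $q_{1-\alpha/2} = \tan(\pi(1/2-\alpha/2))$ does not depend on $Y$.

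Next, note the domain. For $\alpha \leq 1/2$ we have $1-\alpha/2 \geq 3/4$, so $q_{1-\alpha/2} \geq \tan(\pi/4) = 1$. Both tail cut points $\pm q_{1-\alpha/2}$ thus lie in the region $|z|\geq 1$, and hence $\eta_B \geq 1/2$. On that region Theorem~\ref{theo:confidence.dist.cauchy} says the confidence distribution of $Z$ coincides with $\Cauchy(0,1)$.

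The key step uses the definition of a confidence distribution: its $\alpha/2$ and $1-\alpha/2$ quantiles form the valid $(1-\alpha)100\%$ interval $Y/2 \pm \eta|Y|$, with $\eta$ chosen as in Appendix~\ref{sec:bm.deriv}. Because the confidence and posterior CDFs of $Z$ agree on $|z|\ge1$, and the relevant quantiles lie there, the two sets of quantiles are identical. Hence $\eta_B = \eta$, and the credible interval equals the confidence interval $Y/2\pm\eta|Y|$.

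The main obstacle is the word \emph{exact}. If $\eta$ in Appendix~\ref{sec:bm.deriv} is defined to make the minimum coverage over $\nu$ exactly $1-\alpha$ (the infimum is attained or approached), then exactness follows directly. To make this airtight, I would check that the confidence distribution's tail mass beyond $\pm q$ equals the worst-case error probability $\sup_\nu \Pr(\beta \notin Y/2 \pm \eta|Y|)$, not merely an upper bound on it. The alternative route is direct computation. The interval is $(c_1Y, c_2Y)$ with $c_1 = 1/2-\eta_B \le 0$ and $c_2 = 1/2+\eta_B$. I would write the Cauchy analogue of \eqref{eq:bayes.true.level}, namely coverage $F(\nu(1-1/c_2)) + 1 - F(\nu(1-1/c_1))$ with $F$ the Cauchy CDF, and minimize over $\nu>0$. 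I would then verify that the minimum equals $1-\alpha$ when $\eta_B = \tan(\pi(1-\alpha)/2)/2$. I expect this to reduce to a tangent addition identity.
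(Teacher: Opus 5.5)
Your proposal is correct and follows the paper's route: the corollary is obtained by combining Theorems~\ref{theo:confidence.dist.cauchy} and~\ref{theo:posterior.dist.cauchy}, with $\alpha \le 1/2$ corresponding to $\eta \ge 1/2$. You derive that correspondence directly from $q_{1-\alpha/2} \ge 1$, whereas the paper cites the fact that the level at $\eta = 1/2$ is exactly $1/2$. Your worry about exactness is already settled in the proof of Theorem~\ref{theo:confidence.dist.cauchy}, which shows the error is maximized (and attained) at $\nu = 1$ with value $1 - \frac{2}{\pi}\arctan(2\eta)$; your fallback tangent-identity computation would just reproduce that argument.
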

\begin{proof}
  Theorems~\ref{theo:confidence.dist.cauchy} and
  \ref{theo:posterior.dist.cauchy} hold for $\eta \geq 1/2$. For
  $\eta = 1/2$, the confidence level of interval $Y/2 \pm \eta |Y|$
  is exactly 1/2 \citep{blachman1987confidence}.
\end{proof}

\subsection{Inverting a Bayes test}
\label{sec:bf.invert}

The elementary link between frequentist hypothesis testing and
confidence intervals \citep[e.g.,][]{casella2024statistical} is weaker
in the Bayesian paradigm. That is, credible intervals are constructed
directly from a posterior distribution, not via inverting some Bayes
test \citep{gelman2013bayesian}. However, one approach to developing a
Bayesian analogue of \eqref{eq:ci.init.2} is to develop a Bayes test
that is equivalent to the test implied by \eqref{eq:ci.init.2}. We can
then invert this Bayes test to get an equivalent confidence
interval. The standard Bayesian approach to hypothesis testing is to
use Bayes factors \citep{kass1995bayes}, and so we will focus on
inverting a frequentist test that uses a Bayes factor as a test
statistic. This might seem like an odd approach, but there are some
recent frequentist justifications for using the Bayes factor as a test
statistic in this way \citep{bayarri2016rejection,
  fowlie2023neyman}. For an alternative link between the frequentist
confidence interval and Bayes testing approaches, see
Appendix~\ref{sec:pdr} where we discuss posterior density ratio tests
\citep{basu1996bayesian}.

The CI in \eqref{eq:ci.init.2} can be seen as an inverted
``augmented'' $t$-test, where the $t$-statistic is calculated from the
augmented data $X$ and $A$ (treating $A$ as an observation). This is
because $(X + A)/2$ and $|X-A|/\sqrt{2}$ are the sample mean and sample
standard deviation, respectively, of $X$ and $A$. Thus, any Bayes
factor which is a monotonic function of this augmented $t$-statistic
can be used as a test statistic which results in a test equivalent to
that implied by \eqref{eq:ci.init.2}.  In the basic (non-augmented)
case, many Bayes factors have been found which are elementary
functions of the one-sample and two-sample $t$-statistics
\citep{jeffreys1998theory, gonen2005bayesian, rouder2009bayesian,
  wang2016simple, gonen2019comparing, gronau2020informed,
  francis2023equivalent}. We have the following theorem for the
augmented $t$-statistic.

\begin{theorem}
  \label{theo:aug.t.as.bf}
  Let $X_1,\ldots,X_n \overset{\text{iid}}{\sim} N(\mu,
  \sigma^2)$. Suppose we are testing $H_0: \mu = \mu_0$ versus
  $H_A: \mu \neq \mu_0$. Let $\delta = (\mu - \mu_0)/\sigma$ be the
  standardized effect size. Consider the prior under the null,
  \begin{align}
    \label{eq:prior.h0.aug}
    \pi(\sigma^2) = \frac{\frac{1}{\sigma^2}N(A|\mu_0, \sigma^2)}{\int_{\sigma^2}\frac{1}{\sigma^2}N(A|\mu_0, \sigma^2)\mathrm{d}\sigma^2}
  \end{align}
  and under the alternative,
  \begin{align}
    \label{eq:prior.ha.aug}
    \pi(\delta, \sigma^2) = \frac{\frac{1}{\sigma^2}N(A|\mu_0 + \delta\sigma, \sigma^2)\pi(\delta)}{\int_{\delta,\sigma^2}\frac{1}{\sigma^2}N(A|\mu_0 + \delta\sigma, \sigma^2)\pi(\delta)\mathrm{d}\delta \mathrm{d}\sigma^2}.
  \end{align}
  I.e., we use prior $\pi(\sigma^2) = 1 / \sigma^2$ under the null and
  $\pi(\delta,\sigma^2) = \pi(\delta)/\sigma^2$ under the alternative
  and update both with one observation of value $A$. Then the Bayes
  factor for these two hypotheses is
  \begin{align}
    \label{eq:bf.def}
    \mathrm{BF} = \frac{\int_{\delta}T_{n}(t|\sqrt{n+1}\delta)\pi(\delta)\mathrm{d}\delta}{T_{n}(t)},
  \end{align}
  where $T_{n}(\cdot)$ is the $t$-density with $n$ degrees of freedom,
  $T_{n}(\cdot|b)$ is the non-central $t$-density with $n$ degrees of
  freedom and noncentrality parameter $b$, and $t$ is the augmented
  $t$-statistic calculated using data $X_1,\ldots,X_n,A$:
  \begin{align}
    \label{eq:aug.t.def}
    t = \frac{\hat{\mu} - \mu_0}{\hat{\sigma} / \sqrt{n+1}},
  \end{align}
  where $\hat{\mu}$ and $\hat{\sigma}$ are the sample mean and sample
  standard deviation of $X_1,\ldots,X_n,A$.
\end{theorem}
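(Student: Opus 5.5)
The plan is to avoid computing the two marginal likelihoods through the posterior-updated priors. Instead, we note that the ``prior updated with one observation $A$'' is, up to a normalizing constant, the original improper prior times the likelihood of $A$. Under each hypothesis, the marginal likelihood of $X_1,\ldots,X_n$ under prior \eqref{eq:prior.h0.aug} (respectively \eqref{eq:prior.ha.aug}) therefore equals the marginal likelihood of the augmented sample $(X_1,\ldots,X_n,A)$ under the improper prior $1/\sigma^2$ (respectively $\pi(\delta)/\sigma^2$), divided by the marginal likelihood of the single point $A$ under that same prior.

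The first step is to check that these single-point normalizers coincide under $H_0$ and $H_A$, so that they cancel in the Bayes factor. Under $H_0$ we have $\int \sigma^{-2}N(A|\mu_0,\sigma^2)\,\mathrm{d}\sigma^2$. Under $H_A$, substituting $u=(A-\mu_0)/\sigma$ and integrating out $\sigma^2$ for fixed $\delta$ gives
\begin{align*}
\int \sigma^{-2}N(A|\mu_0+\delta\sigma,\sigma^2)\,\mathrm{d}\sigma^2 = \frac{2}{|A-\mu_0|}\int_0^\infty \phi(u-\delta)\,\mathrm{d}u .
\end{align*}
This depends on $\delta$, which is a complication. I would therefore handle the normalizer carefully. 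One option is to interpret \eqref{eq:prior.ha.aug} as given, so that the factor stays inside the normalizing integral together with $\pi(\delta)$. Alternatively, if needed, we can use the scale-invariance structure, since both $\sigma$-integrals scale as $|A-\mu_0|^{-1}$. In the final ratio, the normalizers should reduce to constants that are absorbed into the stated formula. I would verify this for the null first: its normalizer is $\propto 1/|A-\mu_0|$.

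The main step is the standard computation of Bayes factors for a one-sample $t$-test with $m=n+1$ observations (the classical Jeffreys--Rouder argument). Under $H_0$ with prior $1/\sigma^2$, the marginal density of the augmented data is $\propto \left[\sum(x_i-\mu_0)^2\right]^{-m/2} = \left[(m-1)s^2 + m(\hat\mu-\mu_0)^2\right]^{-m/2}$. Under $H_A$ with prior $\pi(\delta)/\sigma^2$, for fixed $\delta$, I would write the data in sufficient-statistic form $(\hat\mu, s)$ and change variables to $t$ and $s$. The joint density of $(t,s)$ factors through the noncentral $t$ density $T_{m-1}(t\mid\sqrt{m}\,\delta)$, and the remaining factor, after integrating $\sigma$ against $1/\sigma^2$, depends only on the data through a common Jacobian term. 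Equivalently, because the problem is invariant under scale changes of $(x-\mu_0)$, one can use Wijsman's representation theorem. Under right-Haar prior $1/\sigma$ (or $1/\sigma^2$ in $\sigma^2$, which is the same measure) on the scale group, the ratio of integrated likelihoods equals the ratio of densities of the maximal invariant $t$. That ratio is $T_{m-1}(t\mid \sqrt{m}\delta)/T_{m-1}(t)$. Integrating over $\pi(\delta)$ and setting $m-1=n$ gives \eqref{eq:bf.def} with $t$ as in \eqref{eq:aug.t.def}.

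The obstacle I expect is bookkeeping of the $A$-normalizers, namely showing they cancel or are absorbed, given the $\delta$-dependence above. I would resolve this by checking that the normalizing constant in \eqref{eq:prior.ha.aug}, which integrates over $\delta$ as well, is exactly the $m=1$ analogue of the alternative marginal. By the same Wijsman argument with one observation (where there is no degree of freedom for $t$), this equals the null normalizer, since the maximal invariant for a single point is only a sign. So the ratio of normalizers is $1$ once $\pi(\delta)$ is treated symmetrically; if not, I would restrict to symmetric $\pi(\delta)$, as is standard.
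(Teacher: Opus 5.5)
Your reduction (updated prior $=$ improper prior $\times$ likelihood of $A$, so each marginal is an augmented-data marginal divided by a single-point normalizer) is the one the paper uses implicitly. The paper proves the unaugmented result for the improper priors $1/\sigma^2$ and $\pi(\delta)/\sigma^2$, then declares the $A$-updated priors ``equivalent to the current setup but using data $X_1,\ldots,X_n,A$.''

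Your core computation is different. The paper integrates by hand. The null integral is an inverse-gamma kernel giving $T_{n-1}(t)$. For the alternative it substitutes $v^2=\sigma^2/((n-1)S^2)$ and recognizes an inverse-$\chi^2$ scale mixture of normals as the noncentral $t$ density, with the same prefactor $[(n-1)\pi]^{-(n-1)/2}[S^2]^{-n/2}\Gamma(\frac{n-1}{2})$ in both. Your invariance argument produces the ratio with no constants at all. Write the density of $(t,s)$ as $\sigma^{-1}g_\delta(t,s/\sigma)$ and substitute $r=s/\sigma$; this gives $\int f(t,s\mid\delta,\sigma)\,\mathrm{d}\sigma/\sigma=p_\delta(t)/s$, so everything but $p_\delta(t)$ is $\delta$-free and cancels. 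Your route is more conceptual; the paper's is more elementary and gives explicit constants.

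One imprecision: under positive scalings alone the maximal invariant is $y/\|y\|$ (with $y$ the augmented data shifted by $\mu_0$), not $t$. You also need that its density ratio depends on $y$ only through $\mathbf{1}^{\top}y/\|y\|=\sqrt{m}\,t/\sqrt{m-1+t^2}$, or you can enlarge the group by orthogonal maps fixing $\mathbf{1}$.

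The step that fails as argued is the cancellation of the $A$-normalizers. A single-observation Wijsman argument does not give $C_A=C_0$. The maximal invariant $\mathrm{sign}(A-\mu_0)$ has a $\delta$-dependent law, and the ratio it yields for fixed $\delta$ is $2\Phi(\mathrm{sign}(A-\mu_0)\delta)$, not $1$. Explicitly, $C_0=1/|A-\mu_0|$ and
\begin{align*}
  C_A=\frac{2}{|A-\mu_0|}\int\Phi\bigl(\mathrm{sign}(A-\mu_0)\,\delta\bigr)\pi(\delta)\,\mathrm{d}\delta .
\end{align*}
Your $\int_0^\infty\phi(u-\delta)\,\mathrm{d}u$ covers only the case $A>\mu_0$. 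So the Bayes factor equals \eqref{eq:bf.def} times $C_0/C_A=\bigl[2\int\Phi(\mathrm{sign}(A-\mu_0)\delta)\pi(\delta)\,\mathrm{d}\delta\bigr]^{-1}$, and there is nothing for this factor to be ``absorbed'' into.

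For symmetric $\pi$, $\Phi(\delta)+\Phi(-\delta)=1$ makes the factor exactly $1$, and with that one line your proof is complete. For asymmetric $\pi$ the displayed formula is false. For example, $\pi=\mathrm{N}(\delta_1,1)$ with $\delta_1\neq0$ and $A>\mu_0$ gives the factor $1/(2\Phi(\delta_1/\sqrt{2}))\neq1$. So symmetry is necessary, not a convenience.

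The paper's proof never compares $C_0$ and $C_A$; its caveat $A\neq\mu_0$ only makes $C_0$ finite. It therefore tacitly needs the same hypothesis, which the paper imposes only later, in Theorem~\ref{thm:bf.monotone}. On this point your proposal is more careful than the paper. Still, replace the ``maximal invariant is only a sign'' argument with the explicit $\Phi$ computation, and state the symmetry assumption up front.
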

\begin{proof}
  See Appendix~\ref{sec:bf.aug.t}. A proof of a similar result for
  two-sample $t$-tests was given by \citet{gronau2020informed}. But
  our proof is a little simpler because it avoids complicated
  integrals.
\end{proof}

The Bayes factor and the $t$-statistic can provide the same decision
for various thresholds when the prior over the standardized effect
size is symmetric (i.e., the Bayes factor is monotone increasing in
$|t|$; Theorem~\ref{thm:bf.monotone}). Although other texts have
proven the functional relationship between $t$ and the Bayes factor, we
do not think conditions for monotonicity have been proven before.

\begin{theorem}
  \label{thm:bf.monotone}
  Let $\mathrm{BF}(t)$ be the Bayes factor in \eqref{eq:bf.def} and $t$
  the augmented $t$-statistic in \eqref{eq:aug.t.def} using data
  $X_1,\ldots,X_n,A$. If $\pi(\delta)$ is symmetric about 0, then
  $\mathrm{BF}(t)$ is monotone increasing in $|t|$.
\end{theorem}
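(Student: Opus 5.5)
Since $T_n(t)$ does not depend on $\delta$, I will work with the ratio
\begin{align*}
  R_\delta(t) = \frac{T_n(t \mid \sqrt{n+1}\,\delta)}{T_n(t)}.
\end{align*}
The Bayes factor is then $\mathrm{BF}(t) = \int R_\delta(t)\pi(\delta)\,\mathrm{d}\delta$, and the goal is to show that this is even in $t$ and increasing for $t \geq 0$.

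\textbf{Step 1: evenness.} Write $b = \sqrt{n+1}\,\delta$. From the usual mixture representation, $T_n(t\mid b) = T_n(-t\mid -b)$. The map $\delta \mapsto -\delta$ preserves $\pi$ because $\pi$ is symmetric. Hence $\mathrm{BF}(-t) = \mathrm{BF}(t)$. It is also convenient to pair $\delta$ with $-\delta$, giving
\begin{align*}
  \mathrm{BF}(t) = \int_{\delta\geq 0} \bigl[R_\delta(t) + R_{-\delta}(t)\bigr]\pi(\delta)\,\mathrm{d}\delta ,
\end{align*}
up to the handling of any atom at $0$. An atom at $0$ only contributes the constant $R_0 \equiv 1$, which is harmless.

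\textbf{Step 2: explicit form of the ratio.} I will write the noncentral $t$ density as a mixture over the chi variable. Let $V \sim \chi^2_n$ and $t = (Z + b)/\sqrt{V/n}$. Integrating out $V$ gives
\begin{align*}
  T_n(t\mid b) = T_n(t)\, e^{-b^2/2}\, \E\bigl[\exp(b\, t\, W/\sqrt{n+t^2})\bigr],
\end{align*}
where $W$ has density proportional to $w^n e^{-w^2/2}$ on $w > 0$. This follows from the standard substitution $w = s\sqrt{n+t^2}$ in the integral over $s = \sqrt{V}$ scaled appropriately. Writing $u = t/\sqrt{n+t^2}$, which is increasing in $t$ and odd, we get
\begin{align*}
  R_\delta(t) + R_{-\delta}(t) = 2e^{-b^2/2}\,\E\bigl[\cosh(b\,u\,W)\bigr].
\end{align*}

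\textbf{Step 3: monotonicity.} The function $\cosh$ is even and increasing in $|x|$, and $W > 0$. So for fixed $b$ the expression in Step 2 is increasing in $|u|$, and therefore in $|t|$. Integrating against the nonnegative measure $\pi$ over $\delta \geq 0$ preserves monotonicity. I will use monotone convergence or Fubini to justify exchanging differentiation or ordering with the integral; these are all nonnegative integrands, so the exchange is benign.

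\textbf{Expected main obstacle.} The main obstacle is Step 2: verifying the mixture representation cleanly, and confirming that the $t$-dependence enters only through $u = t/\sqrt{n+t^2}$ once the $T_n(t)$ factor is extracted. Concretely, the joint density of $(t, s)$ with $s = \sqrt{V}$ must factor as $T_n(t)$ times a term depending on $b$ and $t$ only via $b\,t/\sqrt{n+t^2}$. I would first complete the square in $s$. The Gaussian kernel
\begin{align*}
  \exp\bigl(-(ts/\sqrt{n} - b)^2/2\bigr)
\end{align*}
combined with $s^{n-1}e^{-s^2/2}$ gives exponent
\begin{align*}
  -s^2(n+t^2)/(2n) + b\,t\,s/\sqrt{n} - b^2/2,
\end{align*}
and the rescaling $w = s\sqrt{(n+t^2)/n}$ produces exactly the claimed form.
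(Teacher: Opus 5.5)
Your proposal is correct and follows the same skeleton as the paper's proof: evenness from the symmetry of $\pi$ and $T_n(t\mid b)=T_n(-t\mid -b)$, pairing $\delta$ with $-\delta$, and a mixture representation of the noncentral $t$ density in which $t$ enters only through $t/\sqrt{n+t^2}$. Your representation is exactly Kruskal's, which the paper cites, written before completing the square in $w$. The paper instead works with the completed-square functions $Hh_\nu(\pm u)$ and proves monotonicity by differentiating in $u$ and comparing integrands pointwise. Your form $2e^{-b^2/2}\E[\cosh(buW)]$ gives monotonicity in $|t|$ immediately, with no interchange of limits needed, since pointwise ordering in $t$ for each $\delta$ already integrates to an ordering of $\mathrm{BF}$.
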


Theorem~\ref{thm:bf.monotone} implies that there exist two constants,
$c_1$ and $c_2$, such that intervals
\begin{align}
  \label{eq:standard.t.int}
  &\left\{\mu: \left|\frac{\hat{\mu} - \mu}{\hat{\sigma}/\sqrt{n+1}}\right| < c_1\right\} \text{ and}\\
  \label{eq:bf.equivalent.int}
  &\left\{\mu: \mathrm{BF}\left(\frac{\hat{\mu} - \mu}{\hat{\sigma}/\sqrt{n+1}}\right) < c_2\right\}
\end{align}
are equal. Interval \eqref{eq:standard.t.int} is the augmented
$t$-interval. Interval \eqref{eq:bf.equivalent.int} can be seen as the
values of $\mu$ such that evidence against $\mu$ (in the Bayesian
hypothesis testing sense) would not be as strong as some predetermined
value.

It is interesting to explore what Bayes factor thresholds in
\eqref{eq:bf.equivalent.int} correspond to different confidence levels
in \eqref{eq:standard.t.int}. For that, we need to specify the prior
$\pi(\delta)$. The most typical choice would likely be a Cauchy prior
on $\delta$ \citep{jeffreys1998theory}. We compare different augmented
$t$-statistic thresholds (and the corresponding level of the
confidence interval) to the corresponding Bayes factor using this
Cauchy prior in Table~\ref{tab:bf.t}. Perhaps unsurprisingly,
augmented $t$-statistics seem to be overly sure of the parameter,
relative to the level of evidence indicated by the Bayes factor
\citep{benjamin2019three}. E.g., if we were testing $H_0: \mu = \mu_0$
against $H_1: \mu \neq \mu_0$ and obtained a $p$-value of 0.01 using
the augmented $t$-statistic, this would only correspond to a Bayes
factor of 5.02, which provides ``positive'' but not ``strong''
evidence against the null using typical thresholds
\citep{kass1995bayes}.

\section{$n \geq 2$ extensions}

\subsection{A credible interval approach}
\label{sec:n.2.cred.bad}

In principle, generalizing the credible interval approach from
Section~\ref{sec:gen.case} is relatively straightforward: use prior
\eqref{eq:prior.best} and update with more data. In this section, we
will explore doing so, but demonstrate that the resulting credible
intervals do not control the confidence error probability.

In the normal case, using prior \eqref{eq:prior.best} results in the
model $Y_1,\ldots,Y_n \overset{\text{iid}}{\sim} \N(\beta, \beta^2)$ and
$\pi(\beta) = |\beta|^{-1}$, which yields the posterior
\begin{align}
  \label{eq:kernel.inv.norm.n2}
  \pi(\beta|\bs{Y}) = K\left(n+1,\frac{\sum_{i=1}^nY_i}{\sum_{i=1}^nY_i^2}, \frac{1}{\sum_{i=1}^nY_i^2}\right) |\beta|^{-(n+1)}\exp\left[ -\frac{1}{2/\sum_{i=1}^nY_i^2} \left(\frac{1}{\beta} - \frac{\sum_{i=1}^nY_i}{\sum_{i=1}^nY_i^2}\right)^2\right].
\end{align}
Equation \eqref{eq:kernel.inv.norm.n2} is a \emph{generalized} inverse
normal distribution \citep{robert1991generalized} with shape parameter
$n+1$, inverse mean parameter
$\frac{\sum_{i=1}^nY_i}{\sum_{i=1}^nY_i^2}$, and inverse variance
parameter $\frac{1}{\sum_{i=1}^nY_i^2}$. The $K(\cdot,\cdot,\cdot)$ in
\eqref{eq:kernel.inv.norm.n2} is the normalizing constant, calculated
by \citet{robert1991generalized}:
\begin{align}
  \label{eq:norm.const}
K(\alpha,\mu,\tau^2)^{-1} = \left[\tau^2\right]^{\frac{1}{2}(\alpha - 1)}\exp\left[-\frac{\mu^2}{2\tau^2}\right]2^{\frac{1}{2}(\alpha-1)}\Gamma\left[\frac{1}{2}(\alpha-1)\right] \M\left[\frac{1}{2}(\alpha-1),\frac{1}{2},\frac{\mu^2}{2\tau^2}\right],
\end{align}
where $\M(\cdot,\cdot,\cdot)$ is Kummer's confluent hypergeometric
function of the first kind \citep[\href{https://dlmf.nist.gov/13.2}{Section 13.2}]{NIST:DLMF}.

We did not find an implementation of the generalized inverse normal
distribution in R, and so we created one in our package. We also did
not find implementations of the confluent hypergeometric function
$\M(a,b,z)$ that were stable for arbitrary
$a \in \{1/2,1,3/2,2,\ldots\}$, $b = 1/2$, and $z > 0$, the values
possible for the posterior calculation in \eqref{eq:norm.const}. Thus,
we implemented one in our package. For small $z$, we use series
\href{https://dlmf.nist.gov/13.2.E2}{(13.2.2)} from
\citet{NIST:DLMF}. For large $z$, we use one of the recurrence
relations of $\M(\cdot,\cdot,\cdot)$ to obtain stable function
values. From starting values (derived from Mathematica code in the
Supplementary Materials) \citep{Mathematica}
\begin{align}
  \begin{split}
    \M(0,1/2,z) &= 1,\\
    \M(1/2,1/2,z) &= \exp(z),\\
    \M(1,1/2,z) &= 1 + \exp(z)\sqrt{\pi z}(2\Phi(\sqrt{2z}) - 1), \text{ and}\\
    \M(3/2,1/2,z) &= \exp(z) (1 + 2z),
  \end{split}
\end{align}
we use recurrence relation
\href{https://dlmf.nist.gov/13.3.E1}{(13.3.1)} from \citet{NIST:DLMF}:
\begin{align}
  \label{eq:dlmf.reccurance}
   a\M(a+1,b,z) = (b - a)\M(a - 1, b, z) + (2a - b + z)\M(a,b,z).
\end{align}
Since the $\M(\cdot,\cdot,\cdot)$ terms in \eqref{eq:dlmf.reccurance}
are all positive, we can keep all terms on the log-scale and update
using LogSumExp, which gives us greater numeric stability for large
$z$,
\begin{align}
  \begin{split}
  \log\M(a+1,b,z) &= \log\left[(b-a)e^{\log\M(a - 1, b, z) - w} + (2a-b+z)e^{\log\M(a,b,z) - w} \right] + w -\log(a), \text{ where}\\
    w &= \max[\log \M(a - 1, b, z), \log \M(a,b,z)].
  \end{split}
\end{align}
E.g., calculating $\M(2,1/2, 100)$ returns \texttt{NaN}'s when
using GSL \citep{galassi2009gnu}, but returns $\exp(107.5)$ when using
our recurrence method. We also implemented CDF and quantile functions
for the generalized inverse normal via numeric integration and
root-finding tools.

Posterior \eqref{eq:kernel.inv.norm.n2} does not produce credible
intervals that are valid confidence intervals. We simulated
$Y_1,\ldots,Y_n \overset{\text{iid}}{\sim} \N(\beta, \beta^2 / \nu^2)$ for
$\beta \in \{1, 5, 10\}$, $\nu \in \{0.5, 1, 2\}$, and
$n \in \{2, 10, 100\}$ for 1000 replications for each simulation
scenario. In each replication, we generated 95\% credible intervals
using posterior \eqref{eq:kernel.inv.norm.n2}, and calculated the
confidence error rate for the 1000 replications. These rates are
presented in Figure~\ref{fig:cred.t1e.n2}. We see there that only when
$\nu = 1$ do the credible intervals have the correct confidence error
rate. (The correct coverage at $\nu = 1$ is to be expected given our
invariance discussion in Section \ref{sec:gen.case} and standard
results from the invariance literature; see Section 6.6 of
\citealt{berger2013statistical}.)

Why do these priors fail to control the confidence level for
$n \geq 2$? We offer a heuristic argument. Recall that in the $n = 1$
case, the posterior for $\nu$ equals the prior,
$\pi(\nu|Y) = \pi(\nu)$, so a single observation provides no
information about $\nu$. This is what frees us to choose the prior
over $\nu$ to maximize a criterion of our choosing (fat tails), but
the resulting prior is highly informative (a point mass). When
$n \geq 2$, the data do inform $\nu$, so $\pi(\nu|Y) \neq \pi(\nu)$,
and this information can contradict the point-mass prior we
imposed. This is consistent with Figure~\ref{fig:cred.t1e.n2}, where
coverage is correct only at $\nu = 1$ (the point mass value for the
normal). It is possible that some other prior over $\nu$ would yield
valid confidence intervals for $n \geq 2$, but we have not found
one. Part of the difficulty is that the fattest-tails choice is
possible only because the data are silent on $\nu$ at $n = 1$. Once
the data speak, there is no longer an obvious criterion singling out a
prior over $\nu$.

\subsection{Augmented $t$-intervals}
\label{sec:aug.t.int.gen}

To generalize the methods of Section~\ref{sec:bf.invert} to $n \geq 2$
we note, as there, that the Bayes factor from
Theorem~\ref{theo:aug.t.as.bf} is a monotonic function of the
augmented $t$-statistic (Theorem~\ref{thm:bf.monotone}), where we have
data $X_1,\ldots,X_n$ and $A$ (see Appendix~\ref{sec:more.m} for
exploring using more augmented data). Thus, any confidence interval
that is inverted from a test using the Bayes factor is equivalent to
an augmented $t$-interval. Let $\hat{\mu}$ and $\hat{\sigma}^2$ be the
sample mean and sample variance using the augmented data. Then
inverting an augmented $t$-test results in intervals of the form
\begin{align}
  \label{eq:aug.t.int}
  \hat{\mu} \pm \eta \hat{\sigma}/\sqrt{n + 1},
\end{align}
where $\eta$ is chosen to control the error probability. Note that for
$n=1$, $\hat{\mu} = (X+A)/2$ and $\hat{\sigma} = |X-A|/\sqrt{2}$,
making this equivalent to the $n=1$ confidence intervals of
Appendix~\ref{sec:bm.deriv}. For large $n$, we also have that
$\hat{\mu} \approx \bar{X}$ and $\hat{\sigma} \approx S$, making it
asymptotically equivalent to the Student $t$-intervals. This approach
thus nicely bridges the standard Student $t$-intervals with the
classical $n=1$ confidence intervals.

To find $\eta$, we use the following form of the squared augmented
$t$-statistic derived in Theorem~\ref{theo:form.aug.t.n.2.orig}.
\begin{theorem}
  \label{theo:form.aug.t.n.2.orig}
  \begin{gather}
    \frac{(\hat{\mu} - \mu)^2}{\hat{\sigma}^2/(n + 1)} = \frac{(nZ - \sqrt{n}\nu)^2}{(n+1)W^2 + (Z + \sqrt{n}\nu)^2}, \text{ where}\\
    Z = \frac{\bar{X} - \mu}{\sigma/\sqrt{n}} \sim \N(0,1),~  W^2 =  \frac{(n-1)S^2}{\sigma^2} \sim \chi^2_{n-1}, \text{ and } \nu = \frac{\mu - A}{\sigma},
  \end{gather}
  and $Z$ and $W^2$ are independent.
\end{theorem}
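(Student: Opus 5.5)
The plan is a direct algebraic computation: write the augmented sample mean $\hat{\mu}$ and augmented sample variance $\hat{\sigma}^2$ (computed from $X_1,\ldots,X_n,A$) in terms of $\bar{X}$, $S^2$ and $A$, and then substitute $\bar{X} = \mu + \sigma Z/\sqrt{n}$, $(n-1)S^2 = \sigma^2 W^2$ and $A = \mu - \sigma\nu$. The distributional claims are the standard normal-theory facts: $Z \sim \N(0,1)$, $W^2 \sim \chi^2_{n-1}$, and $\bar{X}$ and $S^2$ are independent. I would cite these rather than reprove them.

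First, the numerator. Since $\hat{\mu} = (n\bar{X} + A)/(n+1)$, we have
\begin{align*}
\hat{\mu} - \mu = \frac{n(\bar{X} - \mu) + (A - \mu)}{n+1} = \frac{\sigma(\sqrt{n}Z - \nu)}{n+1}.
\end{align*}
Hence
\begin{align*}
(n+1)(\hat{\mu} - \mu)^2 = \frac{\sigma^2(\sqrt{n}Z - \nu)^2}{n+1}.
\end{align*}

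Second, the denominator. I would use the sum-of-squares decomposition around $\bar{X}$:
\begin{align*}
n\hat{\sigma}^2 = \sum_{i=1}^n (X_i - \hat{\mu})^2 + (A - \hat{\mu})^2 = (n-1)S^2 + n(\bar{X} - \hat{\mu})^2 + (A - \hat{\mu})^2.
\end{align*}
The two deviations are
\begin{align*}
\bar{X} - \hat{\mu} = \frac{\bar{X} - A}{n+1}, \qquad A - \hat{\mu} = -\frac{n(\bar{X} - A)}{n+1}.
\end{align*}
So the last two terms collapse to $n(\bar{X} - A)^2/(n+1)$. Using $\bar{X} - A = \sigma(Z + \sqrt{n}\nu)/\sqrt{n}$, this gives
\begin{align*}
n\hat{\sigma}^2 = \sigma^2\left[W^2 + \frac{(Z + \sqrt{n}\nu)^2}{n+1}\right].
\end{align*}

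Taking the ratio, $\sigma^2$ cancels. Multiplying numerator and denominator by $n(n+1)$ yields
\begin{align*}
\frac{(\hat{\mu} - \mu)^2}{\hat{\sigma}^2/(n+1)} = \frac{n(\sqrt{n}Z - \nu)^2}{(n+1)W^2 + (Z + \sqrt{n}\nu)^2}.
\end{align*}
Finally, $n(\sqrt{n}Z - \nu)^2 = (nZ - \sqrt{n}\nu)^2$, which is the stated form.

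The only step needing care is the variance decomposition, which is the main place an algebra slip could occur. It is worth checking the result against the $n=1$ case, where $W^2 = 0$ and the expression should reduce to the square of the statistic behind \eqref{eq:ci.init.2}.
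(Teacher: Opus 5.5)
Your proof is correct and follows essentially the same route as the paper, which proves the general-$m$ version (Theorem~\ref{theo:form.aug.t.n.2}) by writing $\hat{\mu}$ and $\hat{\sigma}^2$ in terms of $\bar{X}$, $S^2$ and $A$ via Welford-type formulas, substituting $Z$, $W^2$ and $\nu$, and then setting $m=1$. The only difference is that you derive the decomposition $n\hat{\sigma}^2 = (n-1)S^2 + n(\bar{X}-A)^2/(n+1)$ directly instead of citing it, which makes your argument slightly more self-contained.
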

\begin{proof}
  This is a special case of Theorem~\ref{theo:form.aug.t.n.2} in
  Appendix~\ref{sec:more.m}, where we set $m=1$.
\end{proof}

From Theorem~\ref{theo:form.aug.t.n.2.orig}, for a given $\nu$ and $\eta$, the
coverage failure probability is
\begin{align}
  \label{eq:alpha.n2.failure.def}
  \alpha(\nu,\eta) := \Pr_{\nu}\left[ \frac{\left(nZ - \sqrt{n}\nu\right)^2}{(n+1)W^2 + (Z + \sqrt{n}\nu)^2} > \eta^2\right].
\end{align}
Thus, to find $\eta$, we maximize \eqref{eq:alpha.n2.failure.def} over
$\nu$, which results in a worst-case $\alpha$ as a function of
$\eta$. We can then invert this function to obtain the value of $\eta$
given a worst-case $\alpha$, say $\eta_{\alpha}$. Since
\eqref{eq:alpha.n2.failure.def} is a double integral, it is possible
to calculate numerically. Specifically, let
\begin{align}
  g(Z,\nu,\eta) = \frac{(nZ - \sqrt{n}\nu)^2}{\eta^2(n+1)} - \frac{(Z + \sqrt{n}\nu)^2}{n+1},
\end{align}
then
\begin{align}
  \alpha(\nu,\eta) = \int_{-\infty}^{\infty}\int_{0}^{g(z,\nu,\eta)}\chi^2_{n-1}(w^2)\mathrm{d}w^2\phi(z)\mathrm{d}z,
\end{align}
where $\phi(\cdot)$ is the standard normal density and
$\chi^2_{n-1}(\cdot)$ is the $\chi^2_{n-1}$ density.

Table~\ref{tab:mult} contains, for $n = 2,\ldots,10,30$, the
multipliers to the sample standard deviations of either the original
or augmented data to obtain the $95\%$ Student or augmented
$t$-intervals, respectively. The multipliers for the augmented
$t$-intervals are smaller than for the Student $t$-intervals, and for
$n = 2$ or $3$ this reduction is large. If $A$ is near $\mu$, then
this would create augmented $t$-intervals which are much smaller than
the Student $t$-intervals. However, if $A$ is far from $\mu$, then
this would inflate the standard deviation of the augmented data and
the augmented $t$-intervals could be much larger than the Student
$t$-intervals. By $n = 30$, the multipliers are within 2 decimal
places of each other. This is not a coincidence, as they are
asymptotically equivalent, as shown by Theorem~\ref{theo:teta}.

\begin{theorem}
  \label{theo:teta}
  Let $\eta_{\alpha,n}$ be the multiplier of the $(1 - \alpha)100\%$
  augmented $t$-interval for a sample of size $n$, and let
  $t_{1-\alpha/2,n-1}$ be the $1-\alpha/2$ quantile of a
  $t$-distribution with $n -1$ degrees of freedom. Then
  $\eta_{\alpha,n} = t_{1-\alpha/2,n-1} +
  \mathcal{O}\left(\frac{1}{\sqrt{n}}\right)$.
\end{theorem}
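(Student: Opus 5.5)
The plan is to sandwich $\eta_{\alpha,n}$ between two quantities that are both $t_{1-\alpha/2,n-1}+\mathcal{O}(1/\sqrt{n})$. Write $t_{n-1}=Z\sqrt{n-1}/W$, which is exactly Student-$t$ with $n-1$ degrees of freedom because $Z$ and $W^2$ are independent (Theorem~\ref{theo:form.aug.t.n.2.orig}). The key algebraic step is to rewrite the numerator in Theorem~\ref{theo:form.aug.t.n.2.orig} as $nZ-\sqrt{n}\nu=(n+1)Z-D$, where $D=Z+\sqrt{n}\nu$. The squared augmented statistic then becomes
\begin{align*}
T_\nu^2=\frac{\left((n+1)Z-D\right)^2}{(n+1)W^2+D^2}.
\end{align*}
Here $\nu$ enters only through the shift $D$. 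Every comparison with Student's $t$ will be made through this representation.

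\emph{Lower bound.} Since $\eta_{\alpha,n}$ is defined through $\sup_\nu \alpha(\nu,\eta)$, it is at least the $1-\alpha$ quantile of $|T_0|$. At $\nu=0$ we have $D=Z$, so
\begin{align*}
T_0^2=\frac{n^2Z^2}{(n+1)W^2+Z^2}=t_{n-1}^2\,\frac{n^2}{(n+1)(n-1)}\left(1+\frac{Z^2}{(n+1)W^2}\right)^{-1}.
\end{align*}
Both correction factors are $1+\mathcal{O}_p(1/n)$, with tails controlled by standard $\chi^2$ concentration ($W^2/n\to1$ at rate $n^{-1/2}$). Hence the quantile of $|T_0|$ equals $t_{1-\alpha/2,n-1}+\mathcal{O}(1/\sqrt{n})$. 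The bounded density of $t_{n-1}$ near its quantile converts the probability perturbation into a quantile perturbation.

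\emph{Upper bound.} I need $\alpha(\nu,\eta)\le\alpha$ uniformly in $\nu$ for $\eta=t_{1-\alpha/2,n-1}+C/\sqrt{n}$. A Cauchy--Schwarz step, $(u-D)^2\le(u^2/c+1)(c+D^2)$ with $u=(n+1)Z$ and $c=(n+1)W^2$, gives the uniform bound
\begin{align*}
T_\nu^2\le 1+\frac{n+1}{n-1}\,t_{n-1}^2.
\end{align*}
The additive $1$ makes this too crude on its own, so I would split into two cases.

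\begin{itemize}
\item For $|\nu|\le K$, with $K$ a fixed constant, $D=\mathcal{O}_p(\sqrt{n})$. Both $D$ in the numerator and $D^2$ in the denominator are relative perturbations of order $1/\sqrt{n}$ (resp.\ $1/n$). The same quantile-perturbation argument as in the lower bound then gives the result, uniformly over the compact $\nu$-range.
\item For $|\nu|>K$, the Cauchy--Schwarz maximiser is $D=-W^2/Z$. This is attainable only when $D$ and $Z$ have opposite signs, so a large $\nu$ can inflate only one tail of $Z$. On the other tail, $|(n+1)Z-D|\le(n+1)|Z|$ while the denominator gains $D^2\gtrsim n\nu^2$, which shrinks that tail's probability by an amount growing in $\nu$. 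I would show this shrinkage more than compensates for the inflated tail once $K$ is large, for example by bounding the one-sided inflated tail by the Cauchy--Schwarz bound restricted to one sign of $Z$. The result would be $\alpha(\nu,\eta)\le\alpha/2+\alpha/2-c(\nu)$ with $c(\nu)\ge0$.
\end{itemize}

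The main obstacle is this uniform control over large $|\nu|$, and in particular the intermediate regime $\nu\asymp n^{\gamma}$ with $0<\gamma<1/2$. There the shift $D$ is large enough to break the $\mathcal{O}(1/\sqrt{n})$ perturbation argument but not large enough to kill the opposite tail outright. My first attempt would be to compute $\partial\alpha(\nu,\eta)/\partial\nu$ from the double-integral representation preceding Table~\ref{tab:mult}. I would try to show that the maximiser over $\nu$ stays in a bounded set, which reduces everything to the compact case. If that fails, I would accept a cruder rate in the intermediate regime and check whether it is still $\mathcal{O}(1/\sqrt{n})$ after optimising the split point.
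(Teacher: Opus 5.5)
You have a genuine gap here, and it cannot be closed along the lines you suggest. Your lower bound is fine; it is even exact, since $T_0^2=n^2t_{n-1}^2/(n^2-1+t_{n-1}^2)$ is increasing in $t_{n-1}^2$. But the real content of the statement is the upper bound, uniform in $\nu$, and that is left open.

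The dangerous regime is not fixed $K$, nor $\nu\asymp n^{\gamma}$ with $\gamma<1/2$. It is $\nu=\lambda\sqrt{n}$ with $\lambda$ of order one. Your own Cauchy--Schwarz maximiser $D=-W^2/Z$ already sits there, at $\nu\approx-\sqrt{n}/Z$. In this regime
\begin{align*}
T_{\lambda\sqrt{n}}^2=\frac{(Z-\lambda)^2}{(n+1)W^2/n^2+(\lambda+Z/n)^2}\;\longrightarrow\;\frac{(Z-\lambda)^2}{1+\lambda^2}.
\end{align*}
Hence $\alpha(\lambda\sqrt{n},\eta)\to f(\lambda):=\Pr(|Z-\lambda|>\eta\sqrt{1+\lambda^2})$. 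This is an $\mathcal{O}(1)$ deformation of the Student error, not a $1/\sqrt{n}$ perturbation of it.

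The gain in one tail and the loss in the other cancel exactly at order $\lambda^2$. Expanding further gives
\begin{align*}
f(\lambda)-f(0)=\tfrac16\,\eta(3-\eta^2)\phi(\eta)\lambda^4+\mathcal{O}(\lambda^6).
\end{align*}
So whenever $z_{1-\alpha/2}<\sqrt3$, i.e.\ $\alpha>2\{1-\Phi(\sqrt3)\}\approx0.083$, the inflated tail wins. The worst $\nu$ then drifts to order $\sqrt{n}$, and $\liminf_n\eta_{\alpha,n}>z_{1-\alpha/2}=\lim_n t_{1-\alpha/2,n-1}$. For example, at $\alpha=0.2$ and $\eta=z_{0.9}\approx1.28$ one gets $f(1)\approx0.211>0.2$. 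Therefore neither ``the maximiser over $\nu$ stays in a bounded set'' nor ``the shrinkage more than compensates'' holds in general. The statement as written is false for such $\alpha$.

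For the case the paper actually uses, $\alpha=0.05$ with $\eta\approx1.96>\sqrt3$, the claim can be proved, but it needs a different organisation from yours:
\begin{itemize}
\item First show $\alpha(\lambda\sqrt{n},\eta)=f(\lambda)+\mathcal{O}(n^{-1/2})$ uniformly in $\lambda$ and in $\eta$ near $z_{1-\alpha/2}$. This uses $(n+1)W^2/n^2=1+\mathcal{O}_p(n^{-1/2})$, the independence of $Z$ and $W$, and the bounded density of $Z$.
\item Then verify the one-dimensional fact that $\sup_\lambda f(\lambda)=f(0)=2\{1-\Phi(\eta)\}$ for $\eta$ near $z_{1-\alpha/2}$.
\end{itemize}
Together these give $\sup_\nu\alpha(\nu,\eta)=2\{1-\Phi(\eta)\}+\mathcal{O}(n^{-1/2})$, and hence $\eta_{\alpha,n}=z_{1-\alpha/2}+\mathcal{O}(n^{-1/2})$. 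Neither your separate lower bound nor the compact case is needed.

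The paper's own proof does not do this. It expands $\hat\mu=\bar X+\mathcal{O}_p(1/n)$ and $\hat\sigma^2=S^2+\mathcal{O}_p(1/n)$, which is valid only for fixed $\nu$. The errors $(A-\bar X)/(n+1)$ and $(\bar X-A)^2/(n+1)$ are no longer negligible once $\nu\gtrsim\sqrt{n}$. The proof also never addresses the supremum over $\nu$ that defines $\eta_{\alpha,n}$. So the obstacle you flagged is real, and the paper leaves it unresolved too.
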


We can compare the expected squared half-widths of the augmented and
Student $t$-intervals to see which are narrower on average under
different conditions. For the Student $t$-intervals, the expected
squared half-width is
\begin{align}
  \label{eq:e.hw.studt}
  \E\left[t_{1-\alpha/2,n-1}^2S^2/n\right] = \frac{t_{1-\alpha/2,n-1}^2\sigma^2}{n(n-1)}\E\left[(n-1)S^2 / \sigma^2\right] = \frac{1}{n}t_{1-\alpha/2,n-1}^2\sigma^2.
\end{align}
For the augmented $t$-interval, the expected squared half-width is
calculated in Theorem~\ref{theo:squared.half.width}

\begin{theorem}
  \label{theo:squared.half.width}
  The expected squared half-width of interval \eqref{eq:aug.t.int} is
  \begin{align}
    \label{eq:e.hw.augt}
    E[\eta^2 \hat{\sigma}^2 / (n+1)] = \frac{n + \nu^2}{(n+1)^2}\eta_{\alpha}^2\sigma^2,
  \end{align}
  where $\nu = (\mu - A) / \sigma$.
\end{theorem}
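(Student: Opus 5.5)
Since $\eta_\alpha$ is a fixed constant, determined only by $n$ and $\alpha$ as in the construction following \eqref{eq:alpha.n2.failure.def}, the claim reduces to showing
\begin{align*}
\E[\hat{\sigma}^2] = \frac{n+\nu^2}{n+1}\sigma^2,
\end{align*}
where $\hat{\sigma}^2$ is the sample variance of the augmented data $X_1,\ldots,X_n,A$. Dividing by $n+1$ and multiplying by $\eta_\alpha^2$ then gives \eqref{eq:e.hw.augt}.

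The plan is to decompose the augmented sum of squares. With $\hat{\mu} = (n\bar{X} + A)/(n+1)$, the standard pooling identity for combining a group of $n$ observations with a single point gives
\begin{align*}
n\,\hat{\sigma}^2 = \sum_{i=1}^n (X_i - \hat{\mu})^2 + (A-\hat{\mu})^2 = (n-1)S^2 + \frac{n}{n+1}(\bar{X} - A)^2.
\end{align*}
I would verify this by writing $X_i - \hat{\mu} = (X_i - \bar{X}) + (\bar{X}-\hat{\mu})$, noting that the cross term vanishes, and using $\bar{X}-\hat{\mu} = (\bar{X}-A)/(n+1)$ and $A-\hat{\mu} = -n(\bar{X}-A)/(n+1)$. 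The two squared terms then combine to
\begin{align*}
\frac{n + n^2}{(n+1)^2}(\bar{X}-A)^2 = \frac{n}{n+1}(\bar{X}-A)^2.
\end{align*}
Alternatively, one could read the identity off Theorem~\ref{theo:form.aug.t.n.2.orig}, whose denominator equals $(n+1)\hat{\sigma}^2/\sigma^2$ times a constant, but the direct computation is cleaner.

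Next, take expectations. We have $\E[(n-1)S^2] = (n-1)\sigma^2$. Since $\bar{X}-A \sim \N(\mu - A, \sigma^2/n)$,
\begin{align*}
\E[(\bar{X}-A)^2] = \frac{\sigma^2}{n} + \nu^2\sigma^2.
\end{align*}
Therefore
\begin{align*}
n\,\E[\hat{\sigma}^2] = (n-1)\sigma^2 + \frac{\sigma^2 + n\nu^2\sigma^2}{n+1} = \frac{n(n + \nu^2)}{n+1}\sigma^2,
\end{align*}
where the last equality uses $(n-1)(n+1) + 1 = n^2$. Dividing by $n$ yields the desired expectation.

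The only delicate step is getting the pooling identity's coefficients right; the rest is routine. I would sanity-check the result at $n=1$: there $\hat{\sigma}^2 = (X-A)^2/2$, so $\E[\hat{\sigma}^2] = (\sigma^2 + (\mu-A)^2)/2 = (1+\nu^2)\sigma^2/2$, which matches the formula.
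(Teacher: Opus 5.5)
Your proposal is correct and takes essentially the same route as the paper. The paper also decomposes $\hat{\sigma}^2 = \frac{n-1}{n}S^2 + \frac{1}{n+1}(\bar{X}-A)^2$ (citing a generalized Welford identity, which you verify directly instead) and then takes expectations term by term, using $\E[(n-1)S^2/\sigma^2] = n-1$ and $\E[(\bar{X}-A)^2] = \sigma^2(1+n\nu^2)/n$.
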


Comparing \eqref{eq:e.hw.studt} and \eqref{eq:e.hw.augt}, only the
value of $\nu$ determines whether the augmented or Student
$t$-interval has the shorter expected squared width. We plot the ratio
of the expected squared half-widths of the Student and augmented
$t$-intervals for various levels of $\nu$ in
Figure~\ref{fig:half.width}.  We see there that there are large
regions of the parameter space where the augmented $t$-interval has
shorter widths on average. For example, for $n=2$, $A$ just needs to be
within about 4.5 standard deviations of $\mu$ to have shorter length
than the Student $t$-interval. Of course, since the augmented and
Student $t$-intervals are asymptotically equivalent, this improvement
decreases with sample size, and for $n \geq 10$ the improvement is
already minimal. However, the region where the augmented $t$-interval
is better appears to be $|\mu - A| / \sigma < \sqrt{2}$ for $n$ large.

\section{Mean hyperbolic excess velocity of interstellar objects}
\label{sec:iso}

As of 5 June 2026, humanity has confirmed three interstellar objects
(ISOs) passing through the Solar System: 1I/'Oumuamua was discovered
on 19 October 2017 \citep{williams2017, meech2017brief}, 2I/Borisov on
30 August 2019 \citep{borisov2019, guzik2020initial}, and 3I/ATLAS on
1 July 2025 \citep{atlas2025, seligman2025discovery}. A key property
of each ISO is its hyperbolic excess velocity, $v_{\infty}$, the
object's speed relative to the Sun far from the Solar System, where
the Sun's gravity is negligible. It (together with the direction of
approach) records the object's motion through the local Galaxy before
the Solar System perturbed it. Under the assumption that ISOs inherit
the kinematics of their parent stellar populations, the distribution
of $v_{\infty}$ across ISOs is a probe of the age and Galactic origin
of the populations that produced them
\citep{hopkins2025predicting}. As of 5 June 2026, the $v_{\infty}$
for each object, taken from the Small-body Database of the Jet
Propulsion Laboratory (JPL), has been measured as:
\begin{itemize}[noitemsep]
\item 1I/'Oumuamua: 26.4 km/s \citep{sbdb_oumuamua},
\item 2I/Borisov: 32.3 km/s \citep{sbdb_borisov}, and
\item 3I/ATLAS: 58.0 km/s \citep{sbdb_atlas}.
\end{itemize}

Eight months prior to the discovery of 'Oumuamua,
\citet{engelhardt2017observational} predicted the distribution of
$v_{\infty}$ for ISOs to be about the same as the distribution of the
relative speed for nearby stars, which has mean 25 km/s and standard
deviation of 5 km/s. In this section, we use this prior knowledge to
construct our augmented $t$-intervals
(Section~\ref{sec:aug.t.int.gen}), and compare these to the FAB
intervals \citep{yu2018adaptive} and the standard Student
$t$-intervals \citep{student1908probable}.

All three interval methods assume $X_1,X_2,X_3 \overset{\text{iid}}{\sim} \N(\mu, \sigma^2)$
and produce valid $(1 - \alpha)100\%$ confidence intervals of
$\mu$. The augmented $t$-intervals require an augmented value, $A$,
which we choose to be 25 based on
\citet{engelhardt2017observational}. The FAB intervals require a prior
distribution over $\mu$, which we choose to be $\N(25, 25)$, again
based on \citet{engelhardt2017observational}. 95\% confidence
intervals calculated using either just 'Oumuamua ($n = 1$), just
'Oumuamua and Borisov ($n = 2$), or all three ISOs ($n = 3$) are
provided in Table~\ref{tab:ci.iso}. Only our augmented $t$-intervals
produce valid confidence intervals at $n = 1$. For $n \geq 2$, notice
that the Student $t$-intervals are much wider than the FAB and
augmented $t$-intervals. For $n = 2$, the augmented $t$-interval is
smaller than the FAB interval, and for $n = 3$ the FAB and augmented
$t$-intervals are about the same width.

It is of interest to see how a Bayesian analysis would proceed using
just the 'Oumuamua observation and the methods of
Section~\ref{sec:normal.family}. We plot the posterior density of
$\mu$ and the posterior predictive distribution of a new observation
in Figure~\ref{fig:post.iso.anal}. We see there that the new
observations, particularly the ATLAS one, are in the tails. The
two-tailed Bayesian $p$-value \citep{rubin1984bayesianly,
  gelman1996posterior} for the ATLAS observation given just the
'Oumuamua observation is 0.02464. This could either indicate that
ATLAS comes from a different stellar population (e.g., thin vs thick
disk) \citep{taylor2025kinematic} or indicate issues in our modeling
assumptions and that the distribution of $v_{\infty}$ is non-normal
\citep{hopkins2025predicting}.

\section{Discussion}
\label{sec:discussion}

In this paper, we developed two Bayesian formulations for the $n=1$
frequentist confidence intervals. The first derives priors whose
marginal posterior credible intervals are asymptotically (in the
confidence level) valid confidence intervals. The second inverts a
frequentist test that uses a Bayes factor (with appropriate priors) as
a test statistic. Our credible interval approach does not produce
valid confidence intervals for $n\geq 2$. But our Bayes factor
approach does, and these intervals have lower expected squared width
than the Student $t$-intervals for parts of the parameter space ---
and the improvements can be large for small $n$.

What other $n=1$ confidence intervals are there? In
\citet{blachman1987confidence}, they considered two others (and to our
knowledge, no other intervals have been thoroughly explored in the
literature):
\begin{align}
  \label{eq:ci.1.center.x}
  X &\pm \eta|X - A|, \text{ and}\\
  \label{eq:ci.1.center.a}
  A &\pm \eta|X - A|.
\end{align}
Our credible interval approach in Section~\ref{sec:gen.case} also
turns out to approximate intervals \eqref{eq:ci.1.center.x} and
\eqref{eq:ci.1.center.a}. This is because the tail probability of
these intervals is the same as that of \eqref{eq:ci.init.2}
\citep{blachman1987confidence}. We did not provide detailed
descriptions of these other intervals because the approximation is a
little worse. E.g., if $X \sim \Cauchy(\mu, \sigma)$, we observe
$x = 1$, and $A = 0$, then the 95\% CI of \eqref{eq:ci.1.center.x} is
(-5.4, 7.4), while the Bayesian 95\% credible interval is (-5.9,
6.9). If we instead look at the 99.9\% confidence and credible
intervals, we get (-317.3, 319.3) and (-317.8, 318.8), respectively,
with the approximation improving even more for larger confidence
levels. This is in contrast to interval \eqref{eq:ci.init.2}, where
the confidence and credible intervals exactly lined up in the Cauchy
case (Section~\ref{sec:cauchy.family}), and nearly so in the normal
case (Section~\ref{sec:normal.family}). \st{Waving our hands}
Heuristically, we suspect that this is because (at least in the normal
case) interval \eqref{eq:ci.init.2} is nearly optimal (in the sense of
\citealt{portnoy2019invariance}, based on the numerical results of
Section~\ref{sec:normal.family}), and Bayesian procedures tend to
produce frequentist procedures with good properties.

There is another strategy we could have taken in
Section~\ref{sec:gen.case} to derive priors for $\mu$ and
$\sigma^2$. We could have taken the confidence distribution implied by
\eqref{eq:ci.init.2} and just treated it as the marginal posterior
distribution of $\mu$. Since the prior is proportional to the
posterior divided by the likelihood, this strategy could potentially
yield priors that produce valid confidence intervals. However, there
are a couple of issues with this approach. First, the confidence
distribution implied by \eqref{eq:ci.init.2} does not exist between
the 25th and the 75th percentiles, so we would have to ``cleverly
impute'' this part of the confidence distribution. Another issue is
that this approach would not specify the posterior of $\sigma^2$ or
the posterior dependence between $\mu$ and $\sigma^2$, which would
need to be derived separately. Our approach from
Section~\ref{sec:gen.case}, on the other hand, uses simple priors to
produce credible intervals that are (asymptotically in the confidence
level) equivalent to the confidence distribution implied by
\eqref{eq:ci.init.2}.

The prior we developed in Section~\ref{sec:gen.case} can be considered
a type of ``probability matching prior'' (PMP)
\citep{lindley1958fiducial, welch1963formulae, peers1965confidence,
  mukerjee1997second, severini2002exact, datta2005probability,
  staicu2008probability, diciccio2017simple}. A PMP is one for which
posterior quantiles yield valid frequentist confidence intervals. Most
PMPs are approximate in the sample size
\citep{datta2005probability}. There are exact results for
location-scale models \citep{severini2002exact, diciccio2017simple};
however, these results use the prior $\pi(\mu,\sigma) = \sigma^{-1}$,
which (in the normal case) yields the standard $t$-intervals, produces
non-integrable posteriors when $n = 1$ (since the posterior is
proportional to $1 / |\mu-x|$), and has been known in general
location-scale models since at least \citet{peers1965confidence}. The
PMP we derived in Section~\ref{sec:gen.case} is new and produces the
$n = 1$ confidence intervals (asymptotically in the confidence level).

In Section~\ref{sec:bf.invert} we used an improper prior for the
variance in calculating the Bayes factor. Using an improper prior in
Bayes factors can be problematic because the final value of the Bayes
factor is only defined up to some undefined multiplicative constant
\citep{kass1995bayes}. However, some justification is provided if the
improper prior is placed on parameters that are shared in both
hypotheses (as in our case) because the indeterminate multiplicative
constant will cancel in the Bayes factor \citep{jeffreys1998theory,
  robert1993note, berger2001objective}. See also
\citet{sanso1996robustness} for more formal justification. However,
particularly in our case, where we are using the Bayes factor as a
frequentist test statistic, it is completely justified to use improper
priors for \emph{any} parameter (not just nuisance parameters), as the
undefined multiplicative constant, once one is arbitrarily chosen,
will just be incorporated in the null distribution one uses to
calibrate the Bayes factor.

\subsection*{Acknowledgments}
\label{sec:ack}

Many thanks to Professor Peter Hoff, Duke University, for providing
comments on a draft of this manuscript.

Most analyses were performed using the R statistical language
\citep{rcoreteam}.

\bibliography{geno_bib.bib}

\subsection*{Data accessibility}
\label{sec:data.available}

All of the methods described in this manuscript are implemented in the
\texttt{nisone} R package on GitHub:\\
\url{https://github.com/dcgerard/nisone}

All analyses in this manuscript are completely reproducible with
executable code, on GitHub:\\
\url{https://github.com/dcgerard/reproduce_nisone}


\clearpage

\section{Figures and Tables}
\label{sec:figs}

\begin{figure}[!htb]
  \centering
  \includegraphics{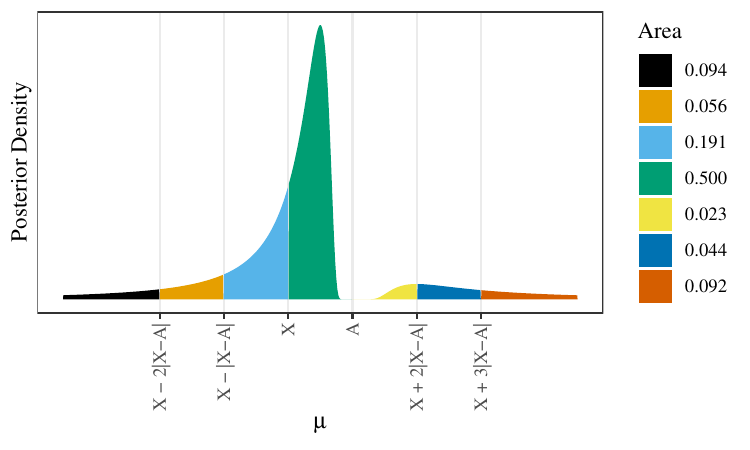}
  \caption{Posterior density of $\mu$ given $X$ when
    $X\sim N(\mu, \sigma^2)$, using prior \eqref{eq:prior.best}, and
    $A > X$. Modes are at $(X + A)/2$ and $A + |X - A|$. The areas
    between values $|X-A|$ units apart are color coded.}
  \label{fig:post.dens.graphic}
\end{figure}

\begin{table}[!htb]
\centering
\begin{tabular}{rrrrrr}
  \hline
  $\alpha$ & lf & lb & uf & ub & Worst $\alpha$ \\
  \hline
  0.30 & -0.94 & -0.72 & 1.94 & 2.00 & 0.3168288 \\
  0.20 & -1.81 & -1.77 & 2.81 & 2.84 & 0.2009078 \\
  0.10 & -4.29 & -4.28 & 5.29 & 5.29 & 0.1000218 \\
  0.05 & -9.15 & -9.15 & 10.15 & 10.15 & 0.0500006 \\
  0.01 & -47.89 & -47.89 & 48.89 & 48.89 & 0.0100000 \\
  \hline
\end{tabular}
\caption{Lower and upper bounds for frequentist (lf and uf) and new
  Bayesian (lb and ub) approaches for different values of $\alpha$
  when $X = 1$ and $A = 0$. The ``Worst $\alpha$'' column contains the
  worst-case error probability of the Bayesian credible interval. The
  confidence level approaches the nominal level for
  $\alpha \rightarrow 0$.}
\label{tab:compare.bayes}
\end{table}

\begin{table}[!htb]
\centering
\begin{tabular}{rrr}
  \hline
  Level & Augmented $t$ & BF \\
  \hline
  0.80 & 4.62 & 1.73 \\
  0.90 & 9.57 & 2.49 \\
  0.95 & 19.31 & 3.27 \\
  0.99 & 96.78 & 5.02 \\
  \hline
\end{tabular}
\caption{The level of the $n=1$ confidence intervals, along with
  maximum augmented $t$-statistic ($c_1$ in \eqref{eq:standard.t.int})
  and maximum BF ($c_2$ in \eqref{eq:bf.equivalent.int}).}
\label{tab:bf.t}
\end{table}

\begin{figure}[!htb]
  \centering
  \includegraphics{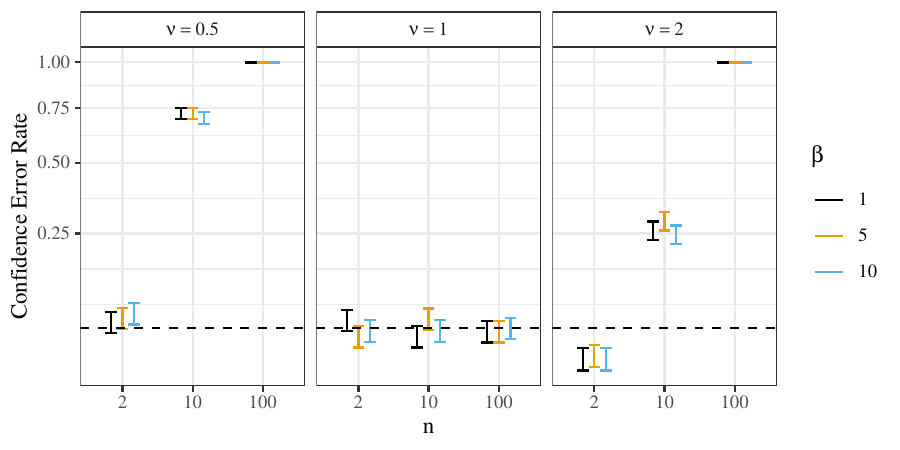}
  \caption{Confidence error rate ($y$-axis, square-root scale) for
    95\% credible intervals based on posterior
    \eqref{eq:kernel.inv.norm.n2} for $n = 2, 10, 100$ ($x$-axis),
    $\beta = 1, 5, 10$ (color), and $\nu = 0.5, 1, 2$ (facets). Exact
    binomial confidence intervals are plotted based on the 1000
    replications for each scenario. The error rate should be at or
    below 0.05 (the horizontal dashed line) to be valid 95\%
    confidence intervals. Only when $\nu = 1$ do we get valid
    confidence intervals across conditions.}
  \label{fig:cred.t1e.n2}
\end{figure}

\begin{table}[!htb]
  \centering
  \begin{tabular}{rrrr}
    \hline
    $n$ & $\eta_{0.05}$ & $\eta/\sqrt{n+1}$ & $t_{0.975,n-1}/\sqrt{n}$ \\
    \hline
    2 & 5.79 & 3.35 & 8.98 \\
    3 & 3.75 & 1.88 & 2.48 \\
    4 & 3.04 & 1.36 & 1.59 \\
    5 & 2.71 & 1.11 & 1.24 \\
    6 & 2.53 & 0.96 & 1.05 \\
    7 & 2.42 & 0.86 & 0.92 \\
    8 & 2.35 & 0.78 & 0.84 \\
    9 & 2.29 & 0.72 & 0.77 \\
    10 & 2.25 & 0.68 & 0.72 \\
    30 & 2.04 & 0.37 & 0.37 \\
    \hline
  \end{tabular}
  \caption{The multiplier $\eta$ for interval \eqref{eq:aug.t.int} for
    $n=2,\ldots,10,30$ when $\alpha = 0.05$. This table also has
    $\eta/\sqrt{n+1}$, which is multiplied by the sample standard
    deviation of $X_1,\ldots,X_n,A$ to get the half-width of the
    augmented $t$-interval. For comparison, this table also presents
    the 0.975 quantile of the $t$-distribution with $n-1$ degrees of
    freedom divided by $\sqrt{n}$, which is multiplied by the sample
    standard deviation of the original data to get the half-width of
    the Student $t$-interval.}
  \label{tab:mult}
\end{table}

\begin{figure}[!htb]
  \centering
  \includegraphics{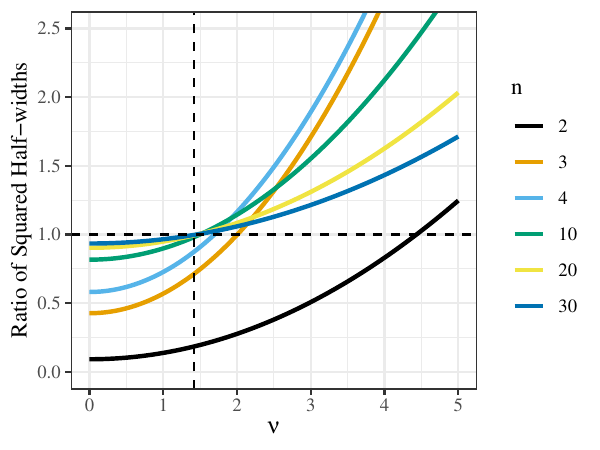}
  \caption{Ratio of 95\% augmented $t$ to Student $t$ CI expected
    squared half-widths ($y$-axis) for different values of the
    standardized deviation from the mean ($x$-axis),
    $\nu = (\mu - A) / \sigma$. Values below 1 (dashed horizontal
    line) indicate that the augmented $t$-interval has shorter
    expected squared width. The vertical dashed line is at $\sqrt{2}$,
    the value of $\nu$ above which the Student $t$-interval appears to
    asymptotically have shorter squared expected length than the
    augmented $t$-interval.}
  \label{fig:half.width}
\end{figure}

\begin{table}[!htb]
  \centering
  \begin{tabular}{rlll}
    \hline
    $n$ & Augmented $t$ & FAB & Student $t$ \\
    \hline
    1 & (12.2, 39.2) & NA & NA \\
    2 & (14.9, 40.9) & (9.9, 48.1) & (-8.1, 66.8) \\
    3 & (6.6, 64.3) & (9.9, 67.2) & (-2.8, 80.6) \\
    \hline
  \end{tabular}
  \caption{95\% confidence intervals for the mean $v_{\infty}$ for
    ISOs based on just 'Oumuamua ($n = 1$), on just 'Oumuamua and
    Borisov ($n = 2$), or on all three ISOs ($n = 3$). Intervals
    constructed are the augmented $t$-intervals
    (Section~\ref{sec:aug.t.int.gen}), FAB intervals
    \citep{yu2018adaptive}, and Student $t$-intervals
    \citep{student1908probable}.}
  \label{tab:ci.iso}
\end{table}

\begin{figure}[!htb]
  \centering
  \includegraphics{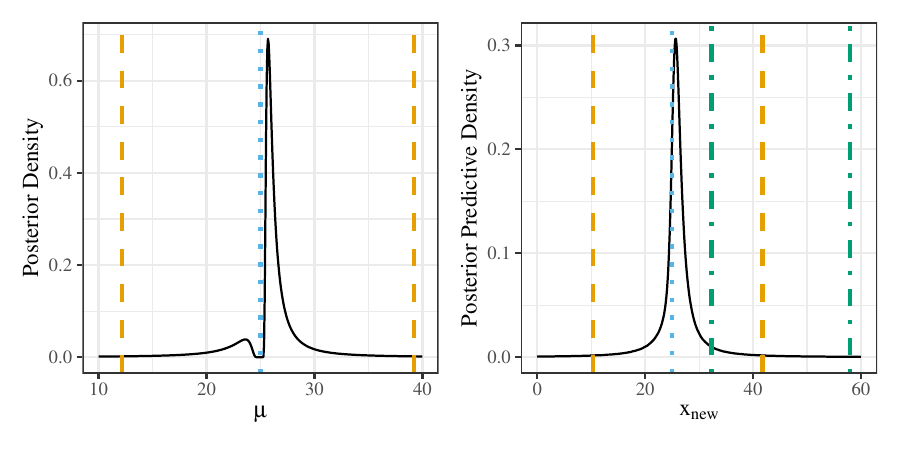}
  \caption{Posterior density of $\mu$ (left facet) and posterior
    predictive distribution for a new observation (right facet) based
    only on the 'Oumuamua observation. The 0.025 and 0.975 quantiles
    are indicated by the orange dashed lines, the value $A$ used is
    indicated by the blue dotted line, and the Borisov and ATLAS
    observations are indicated by the green dot-dash lines.}
  \label{fig:post.iso.anal}
\end{figure}

\clearpage
\appendix

\setcounter{table}{0}
\renewcommand{\thetable}{S\arabic{table}}
\renewcommand{\theHtable}{S\arabic{table}}
\setcounter{figure}{0}
\renewcommand{\thefigure}{S\arabic{figure}}
\renewcommand{\theHfigure}{S\arabic{figure}}
\setcounter{equation}{0}
\renewcommand{\theequation}{S\arabic{equation}}
\renewcommand{\theHequation}{S\arabic{equation}}
\setcounter{section}{0}
\renewcommand{\thesection}{S\arabic{section}}
\renewcommand{\theHsection}{S\arabic{section}}
\setcounter{lemma}{0}
\renewcommand{\thelemma}{S\arabic{lemma}}
\renewcommand{\theHlemma}{S\arabic{lemma}}
\setcounter{theorem}{0}
\renewcommand{\thetheorem}{S\arabic{theorem}}
\renewcommand{\theHtheorem}{S\arabic{theorem}}
\setcounter{corollary}{0}
\renewcommand{\thecorollary}{S\arabic{corollary}}
\renewcommand{\theHcorollary}{S\arabic{corollary}}
\setcounter{algorithm}{0}
\renewcommand{\thealgorithm}{S\arabic{algorithm}}
\renewcommand{\theHalgorithm}{S\arabic{algorithm}}

\section{$n=1$ confidence intervals}
\label{sec:bm.deriv}

Here, we review the $n=1$ confidence intervals from
\citet{blachman1987confidence} and derive the tail approximations for
its confidence distribution. We provide these details for
completeness, and because \citet{blachman1987confidence} do not derive
the order of the approximation error.

Suppose that we have a location-scale family
\begin{align}
f(X|\mu,\sigma) &= \frac{1}{\sigma}\rho\left(\frac{X-\mu}{\sigma}\right),
\end{align}
and we consider intervals of the form
\begin{align}
  (X + A)/2 \pm \eta |X - A|,
\end{align}
where $A$ is some pre-specified value. The failure region of this
interval is $|\mu - (X+A)/2| > \eta |X-A|$. Let
$Z = (X - \mu) / \sigma$ and $\nu = (\mu - A) / \sigma$.  Then this
failure region is equivalent to $|Z - \nu|/2 > \eta|Z + \nu|$. This
inequality is satisfied if (Figure~\ref{fig:bm.bounds})
\begin{align}
  \label{eq:z.bm.bounds}
  Z \in \left(-\nu\frac{2\eta - 1}{2\eta + 1}, -\nu\frac{2\eta + 1}{2\eta - 1}\right).
\end{align}
Since $Z$ has density $\rho(z)$, we can calculate this failure
probability
\begin{align}
  \label{eq:alpha.bm}
  \alpha &= \left| \int_{-\nu(2\eta - 1) / (2\eta + 1)}^{-\nu(2\eta + 1) / (2\eta - 1)} \rho(z) \mathrm{d}z \right|\\
         &= \left| \int_{\nu(2\eta - 1) / (2\eta + 1)}^{\nu(2\eta + 1) / (2\eta - 1)} \rho(z) \mathrm{d}z \right|,
\end{align}
where the second equality follows since $\rho(\cdot)$ is symmetric.

\begin{figure}[!htb]
  \centering
  \includegraphics{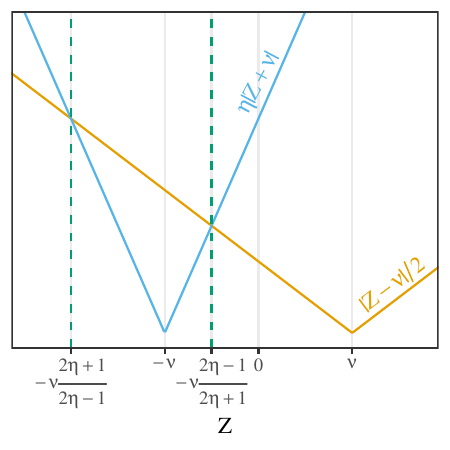}
  \caption{Visualization of bounds for \eqref{eq:z.bm.bounds}. The
    failure region is when $Z$ is between the two dashed green lines.}
  \label{fig:bm.bounds}
\end{figure}

An exact approach not relying on tail approximations finds the
maximum value of $\alpha$ \eqref{eq:alpha.bm} for a fixed value of
$\eta$ (maximizing over $\nu$). This results in the worst-case
$\alpha$ as some function of $\eta$, which can be inverted to find the
$\eta$ given a worst-case $\alpha$. In the normal case, we invert the
following function of $\eta$,
\begin{align}
  \label{eq:alpha.x2cen}
  \alpha(\eta) &= \Phi\left(\tau(\eta)\frac{2\eta + 1}{2\eta-1}\right) - \Phi\left(\tau(\eta)\frac{2\eta - 1}{2\eta+1}\right), \text{ where}\\
  \label{eq:tau.def}
  \tau(\eta) &= (4\eta^2 - 1)\sqrt{\frac{\arcoth(2\eta)}{2(4\eta^3 + \eta)}}, \text{ and } \arcoth(\eta) = \frac{1}{2}\log\left(\frac{\eta+1}{\eta-1}\right).
\end{align}
This has a solution only for $\alpha \leq 1/2$ (or $\eta \geq
1/2$). Note that \eqref{eq:tau.def} is a corrected version of the
equation at the bottom of p.~376 in \citet{blachman1987confidence},
which is missing a $\sqrt{2}$ term.

For an approximate approach, we can do a Taylor series expansion of
\eqref{eq:alpha.bm} at $\eta = \infty$. Let $\zeta = 1 / \eta$, so we
will do a Taylor series expansion at $\zeta = 0$. For $\nu > 0$, we
have
\begin{align}
  \alpha &= \int_{\nu(2\eta - 1) / (2\eta + 1)}^{\nu(2\eta + 1) / (2\eta - 1)} \rho(z) \mathrm{d}z\\
  &= \int_{\nu(2 - \zeta) / (2 + \zeta)}^{\nu(2 + \zeta) / (2 - \zeta)} \rho(z) \mathrm{d}z\\
  \begin{split}
    &= \int_{\nu(2 - 0) / (2 + 0)}^{\nu(2 + 0) / (2 - 0)} \rho(z) \mathrm{d}z + \bigg[\rho\left(\nu \frac{2 + 0}{2 - 0}\right)\nu\left(\frac{1}{2-0}+\frac{2 + 0}{(2-0)^2}\right) - \\
    &\phantom{= \int_{\nu(2 - 0) / (2 + 0)}^{\nu(2 + 0) / (2 - 0)} \rho(z) \mathrm{d}z + \bigg[}\rho\left(\nu\frac{2 - 0}{2 + 0}\right)\nu\left(\frac{-1}{2+0}-\frac{2 - 0}{(2+0)^2}\right)\bigg]\zeta + \mathcal{O}(\zeta^2)
  \end{split}\\
  &= 2\nu\rho(\nu)\zeta + \mathcal{O}(\zeta^2)\\
  &= \frac{2}{\eta}\nu\rho(\nu) + \mathcal{O}\left(\frac{1}{\eta^2}\right).
\end{align}
The largest value of $\alpha$, in the tails, is then approximately
$\frac{2}{\eta}\max_{\nu>0}\nu\rho(\nu)$. Since this is an equal
tailed confidence interval, we can divide this approximation by two to
get the tail approximation of the confidence distribution:
\begin{align}
  &\Pr(\mu \geq (X+A)/2 + \eta|X-A|) = \frac{1}{\eta}\max_{\nu>0}\nu\rho(\nu) + \mathcal{O}\left(\frac{1}{\eta^2}\right)\\
  &\Rightarrow \Pr(\mu \geq \eta|X-A|) = \frac{1}{\eta}\max_{\nu>0}\nu\rho(\nu) + \mathcal{O}\left(\frac{1}{\eta^2}\right) \text{ (center does not affect tail probabilities)}\\
  &\Rightarrow \Pr(\mu \geq z) = \frac{|X-A|}{z}\max_{\nu>0}\nu\rho(\nu) + \mathcal{O}\left(\frac{1}{z^2}\right) \text{ (setting $z = \eta|X-A|$)}.
\end{align}
Setting $Y = X - A$ and $\beta = \mu - A$, as we do in
Section~\ref{sec:gen.case}, we get the confidence distribution for
$\beta$,
\begin{align}
  \Pr(\beta \geq z) = \frac{|Y|}{z}\max_{\nu>0}\nu\rho(\nu) + \mathcal{O}\left(\frac{1}{z^2}\right).
\end{align}

\section{Proofs of results}
\label{sec:proofs}

\paragraph{Lemma~\ref{lem:y.dense}}
\begin{proof}
   We can write the likelihood of $X$ as
   \begin{align}
     f(x|\mu,\nu) &= \frac{\nu}{|\mu - A|}\rho\left(\nu\frac{x - \mu}{|\mu - A|}\right)\\
                  &= \frac{\nu}{|\mu - A|}\rho\left(\nu\frac{x - \mu}{\mu - A}\right) \text{ (symmetry)}\\
                  &= \frac{\nu}{|\mu - A|}\rho\left(\nu\frac{(x - A) - (\mu - A)}{\mu - A}\right)\\
                  &= \frac{\nu}{|\mu - A|}\rho\left(\nu\frac{x - A}{\mu - A} - \nu\right).
   \end{align}
   Setting $Y = X - A$ and $\beta = \mu - A$, and doing a change of variables, we get \eqref{eq:likelihood.beta.nu}.
 \end{proof}

 \paragraph{Theorem~\ref{theo:post.ni1}}
 \begin{proof}
  Since the kernel of the posterior is
  \begin{align}
    \pi(\beta,\nu|Y) &\propto \frac{\nu}{|\beta|}\rho\left(\nu\frac{Y}{\beta} - \nu\right)|\beta|^{-1}\pi(\nu)\\
                     &= \frac{\nu}{\beta^2}\rho\left(\nu\frac{Y}{\beta} - \nu\right)\pi(\nu)\\
                     &\propto \eqref{eq:ls.post.full},
  \end{align}
  it suffices to show that \eqref{eq:ls.post} is a density. Let
  $a = \frac{1}{\beta}$, then a change of variables results in
  \begin{align}
    \pi(a|\nu,Y) &= \nu |Y|\rho\left(a\nu Y-\nu\right)\\
                 &= \frac{1}{1/|\nu Y|}\rho\left(\frac{a-1/Y}{1/(\nu Y)}\right)\\
    \label{eq:post.form.beta}
                 &= \frac{1}{1/|\nu Y|}\rho\left(\frac{a-1/Y}{1/|\nu Y|}\right) \text{ (symmetry)}.
  \end{align}
  Inspecting \eqref{eq:post.form.beta}, we see that this density for
  $1/\beta$ is in same location-scale family as $Y$ but with location
  $\frac{1}{Y}$ and scale $\frac{1}{\nu |Y|}$.
\end{proof}

\paragraph{Theorem~\ref{theo:tail.prob}}
\begin{proof}
  We set $w = 1/z$ and $t = 1/\beta$, then we do a Taylor series of
  the tail probability at $w=0$ (the same as $z = \infty$) to get
  \eqref{eq:tail.prob}.
  \begin{align}
    \Pr(\beta > z|\nu,Y) &= \int_{z}^{\infty}\frac{\nu |Y|}{\beta^2}\rho\left(\nu\frac{Y}{\beta}-\nu\right)\mathrm{d}\beta\\
    \label{eq:int.a.given.1.over.z}
                         &= \int_0^{w}\nu |Y|\rho\left(t\nu Y-\nu\right)\mathrm{d}t\\
                         &= \int_0^{0}\nu |Y|\rho\left(t\nu Y - \nu\right)\mathrm{d}t + \nu |Y|\rho\left(-\nu\right)w + R(w,\nu)\\
                         &=\frac{|Y|}{z}\nu\rho\left(-\nu\right) + R(z,\nu)\\
    \label{eq:taylor.series.orig.proof}
                         &=\frac{|Y|}{z}\nu\rho\left(\nu\right) + R(z,\nu).
  \end{align}
  $R(z,\nu)$ is the remainder term, and by Taylor's Theorem $R(z, \nu) = \mathcal{O}\left(\frac{1}{z^2}\right)$, proving \eqref{eq:tail.prob}.
\end{proof}

\paragraph{Theorem~\ref{theo:valid.conf.post}}
\begin{proof}
  The proof is basically quantile matching. Equation
  \eqref{eq:post.tails} specifies the approximate tail area for any
  threshold $z$. We can also get the approximate tail area for any
  threshold $z$ from the confidence distribution of the interval
  $\frac{y}{2} \pm \eta|y|$ (Appendix~\ref{sec:bm.deriv}). These tail
  areas turn out to be the same up to an error term that is
  $\mathcal{O}(\alpha^2)$.

  Appendix~\ref{sec:bm.deriv} shows that the error probability of
  interval \eqref{eq:ci.init.2} is
  \begin{align}
    \alpha = \Pr(|\beta - Y/2| > \eta|Y|) = \frac{2}{\eta}\tilde{\nu}\rho(\tilde{\nu}) + \mathcal{O}\left(\frac{1}{\eta^2}\right).
  \end{align}
  Now treating $\beta$ as a random variable from the confidence
  distribution \citep{xie2013confidence, schweder2016confidence} of \eqref{eq:ci.init.2}, we have
  \begin{align}
    \alpha/2 &= \Pr(\beta - Y/2 > \eta|Y|) = \frac{1}{\eta}\tilde{\nu}\rho(\tilde{\nu}) + \mathcal{O}\left(\frac{1}{\eta^2}\right)\\
             &\Rightarrow \Pr(\beta > \eta|Y|) = \frac{1}{\eta}\tilde{\nu}\rho(\tilde{\nu}) + \mathcal{O}\left(\frac{1}{\eta^2}\right) \text{ (center does not affect tail probabilities)}\\
    \label{eq:conf.dist.tails}
             &\Rightarrow \Pr(\beta > z) = \frac{|Y|}{z}\tilde{\nu}\rho(\tilde{\nu}) + \mathcal{O}\left(\frac{1}{z^2}\right) \text{ (setting $z = \eta|Y|$)}.
  \end{align}
  Comparing the posterior tails \eqref{eq:post.tails} with the tails
  of the confidence distribution \eqref{eq:conf.dist.tails}, we get
  that the posterior tail probabilities are approximately the error
  probabilities of the confidence interval \eqref{eq:ci.init.2},
  \begin{align}
    \Pr(\beta \geq z|Y) = \alpha/2 + \mathcal{O}\left(\frac{1}{z^2}\right).
  \end{align}
  Note that $\alpha$ is of the same order as $\frac{1}{z}$, which leads us to
  \begin{align}
    \Pr(\beta \geq z|Y) = \alpha/2 + \mathcal{O}\left(\alpha^2\right).
  \end{align}
\end{proof}

\paragraph{Theorem~\ref{theo:confidence.dist.cauchy}}
\begin{proof}
   From Equation (13) of \citet{blachman1987confidence}, the confidence error for the interval $Y/2 \pm \eta |Y|$ for a given $\eta \geq 1/2$ and $\nu > 0$ is
   \begin{align}
     \label{eq:cauchy.a.nu.alpha}
    \alpha &= \left|\int_{\frac{\nu(2\eta - 1)}{2\eta + 1}}^{\frac{\nu(2\eta + 1)}{2\eta-1}} \rho(u)\mathrm{d}u\right|
    = \left|\int_{\nu/a}^{\nu a} \rho(u)\mathrm{d}u\right|,
  \end{align}
  for $a = \frac{2\eta + 1}{2\eta-1}$. Taking derivatives of
  \eqref{eq:cauchy.a.nu.alpha}, we get
  \begin{align}
    \label{eq:a.deriv.cauchy.nu}
    \frac{a}{\pi(1 + \nu^2a^2)} - \frac{1/a}{\pi(1 + \nu^2/a^2)}.
  \end{align}
  Setting \eqref{eq:a.deriv.cauchy.nu} equal to 0 and solving for
  $\nu$, we get $\nu = 1$. One can check this is a maximum. Thus, for
  any $\eta$, the confidence error rate is maximized at $\nu = 1$.

  We will use the following identities of $\arctan(\cdot)$, derived
  from Equations (4.4.16) and (4.4.34) of
  \citet{abramowitz1964handbook},
  \begin{align}
    \label{eq:arctan.neg}
    \arctan(-x) &= -\arctan(x),\\
    \label{eq:arctan.inv}
    \arctan(x) + \arctan\left(\frac{1}{x}\right) &= \frac{\pi}{2}\sign(x), \text{ and}\\
    \label{eq:arctan.rat}
    \arctan\left(\frac{x + 1}{x-1}\right) &= \frac{3\pi}{4} - \arctan(x) \text{ for } x \geq 1.
  \end{align}
  The confidence error \eqref{eq:cauchy.a.nu.alpha} for the interval
  $Y/2 \pm \eta |Y|$ for $\nu = 1$ is
  \begin{align}
    \alpha &= \left|\int_{\frac{2\eta - 1}{2\eta + 1}}^{\frac{2\eta + 1}{2\eta-1}} \frac{1}{\pi(1 + u^2)}\mathrm{d}u\right|\\
           &=\frac{1}{\pi}\arctan\left(\frac{2\eta + 1}{2\eta-1}\right) - \frac{1}{\pi}\arctan\left(\frac{2\eta - 1}{2\eta + 1}\right)\\
           &=\frac{2}{\pi}\arctan\left(\frac{2\eta + 1}{2\eta-1}\right) - \frac{1}{2}\\
           &= 1 - \frac{2}{\pi}\arctan(2\eta).
  \end{align}
  Because the confidence distribution is symmetric about $Y/2$, we
  have,
  \begin{align}
    &\Pr\left(\frac{Y}{2} + \eta |Y| \leq \beta\right) = \Pr\left(\frac{Y}{2} - \eta |Y| \geq \beta\right) = \frac{\alpha}{2} = \frac{1}{2} - \frac{1}{\pi}\arctan(2\eta)\\
    &\Leftrightarrow \Pr(Z \geq 2\eta) = \Pr(Z \leq -2\eta) = \frac{1}{2} - \frac{1}{\pi}\arctan(2\eta)\\
    \label{eq:cdf.cauchy}
    &\Leftrightarrow \Pr(Z \leq z) = \frac{1}{\pi}\arctan(z) + \frac{1}{2},
  \end{align}
  for $|z| \geq 1$. Equation \eqref{eq:cdf.cauchy} is exactly the CDF
  of the $\Cauchy(0, 1)$ distribution.
\end{proof}

\paragraph{Theorem~\ref{theo:posterior.dist.cauchy}}
\begin{proof}
  Using Theorem~\ref{theo:post.ni1}, the posterior distribution of
  $1/\beta$ is $\Cauchy(1 / Y,1 /|Y|)$ or, using the complex notation
  of \citet{mccullagh1992conditional},
  $1/\beta | Y \sim \Cauchy\left(\frac{1}{Y} + \frac{1}{Y}i\right)$. Thus,
  using Equation (2) of \citet{mccullagh1992conditional}, $\beta$ is
  Cauchy with parameter
  \begin{align}
    \frac{1}{\frac{1}{Y} + \frac{1}{Y}i} = \frac{Y}{1 + i} = Y\left(\frac{1}{2} - \frac{1}{2}i\right) = \frac{Y}{2} - \frac{Y}{2}i.
  \end{align}
  Going back to the location-scale parameterization of the Cauchy, we
  have $\beta|Y \sim \Cauchy\left(\frac{Y}{2}, \frac{|Y|}{2}\right)$,
  or $Z|Y \sim \Cauchy(0, 1)$.
\end{proof}

\paragraph{Theorem~\ref{thm:bf.monotone}}
\begin{proof}
Let
\begin{align}
  g(t) = \frac{\int_{\xi}T_{\nu}(t|\xi)\pi(\xi)\mathrm{d}\xi}{T_{\nu}(t)},
\end{align}
and let $\pi(\xi)$ be the density of $\xi$, which is assumed to be
symmetric about 0.  Then in this section, we will prove that $g(t)$ is
monotone increasing in $|t|$. This results in a proof of
Theorem~\ref{thm:bf.monotone} by setting $\nu = n$ and
$\xi = \sqrt{n + 1}\delta$, and noting that the density of $\delta$ is
also symmetric about 0.

Since $\pi(\xi)$ is symmetric about
0, and $T_\nu(-t|\xi) = T_\nu(t|-\xi)$, this implies that $g(\cdot)$ is
also symmetric about 0 since
\begin{align}
  g(-t) &= \frac{\int_{\xi}T_{\nu}(-t|\xi)\pi(\xi)\mathrm{d}\xi}{T_{\nu}(-t)}\\
                  &= \frac{\int_{\xi}T_{\nu}(t|-\xi)\pi(\xi)\mathrm{d}\xi}{T_{\nu}(t)}\\
                  &= \frac{\int_{\gamma}T_{\nu}(t|\gamma)\pi(-\gamma)\mathrm{d}\gamma}{T_{\nu}(t)} \text{ (change of variables, } \gamma = -\xi\text{)}\\
                  &= \frac{\int_{\gamma}T_{\nu}(t|\gamma)\pi(\gamma)\mathrm{d}\gamma}{T_{\nu}(t)} \text{ (symmetry)}\\
                  &= g(t).
\end{align}
It thus suffices to show that $g(t)$ is monotone increasing for $t > 0$.

We can re-write $g(\cdot)$ as
\begin{align}
  g(t) &= \frac{\int_{\xi}T_{\nu}(t|\xi)\pi(\xi)\mathrm{d}\xi}{T_{\nu}(t)}\\
                 &=\frac{\int_{\xi=0}^{\infty}T_{\nu}(t|\xi)\pi(\xi)\mathrm{d}\xi + \int_{\xi=-\infty}^{0}T_{\nu}(t|\xi)\pi(\xi)\mathrm{d}\xi}{T_{\nu}(t)}\\
                 &=\frac{\int_{\xi=0}^{\infty}T_{\nu}(t|\xi)\pi(\xi)\mathrm{d}\xi + \int_{\xi=0}^{\infty}T_{\nu}(t|-\xi)\pi(\xi)\mathrm{d}\xi}{T_{\nu}(t)} \text{ (change of variables + symmetry)}\\
                 &=\int_{\xi=0}^{\infty}\frac{T_{\nu}(t|\xi) + T_{\nu}(-t|\xi)}{{T_{\nu}(t)}}\pi(\xi)\mathrm{d}\xi.
\end{align}
It thus suffices to prove that $[T_{\nu}(t|\xi) + T_{\nu}(-t|\xi)]/T_{\nu}(t)$ is monotone increasing in $t > 0$ for any $\xi > 0$.

  From \citet{kruskal1954monotonicity}, we have the following representation of $T_{\nu}(t|\xi)$,
  \begin{align}
    T_{\nu}(t|\xi) &= \frac{\Gamma(\nu+1)}{2^{\frac{1}{2}(\nu-1)}\Gamma(\frac{\nu}{2})\sqrt{\pi \nu}}\left(\frac{\nu}{\nu+t^2}\right)^{\frac{1}{2}(\nu+1)}\exp\left\{-\frac{1}{2}\frac{\nu\xi^2}{\nu + t^2}\right\} Hh_{\nu}\left(\frac{-\xi t}{\sqrt{\nu + t^2}}\right), \text{ where}\\
    Hh_{\nu}(x) &= \int_0^{\infty}\frac{z^{\nu}}{\Gamma(\nu+1)}\exp\left\{-\frac{1}{2}(z + x)^2\right\}\mathrm{d}z.
  \end{align}
  We calculate
  \begin{align}
    \label{eq:log.t.diff.ratio}
    \log\left[\frac{T_{\nu}(t|\xi) + T_{\nu}(-t|\xi)}{{T_{\nu}(t)}}\right] = -\frac{1}{2}\frac{\nu\xi^2}{\nu + t^2} + \log\left[Hh_{\nu}\left(\frac{-\xi t}{\sqrt{\nu + t^2}}\right) + Hh_{\nu}\left(\frac{\xi t}{\sqrt{\nu + t^2}}\right)\right] - \log\left[Hh_{\nu}(0)\right].
  \end{align}
  We now replace $t$ with the strictly increasing function of it
  \begin{align}
    u = \frac{\xi t}{\sqrt{\nu + t^2}}, \text{  } t = u \sqrt{\frac{\nu}{\xi^2 - u^2}}.
  \end{align}
  Then it suffices to show that the following function of $u$ is
  monotone increasing for $u > 0$,
  \begin{align}
    \label{eq:fun.of.u}
    -\frac{1}{2}(\xi^2 - u^2) + \log\left[Hh_{\nu}(-u) + Hh_{\nu}(u)\right] - \log\left[Hh_{\nu}(0)\right].
  \end{align}
  Taking the derivative of \eqref{eq:fun.of.u} with respect to $u$, we get,
  \begin{align}
    \label{eq:deriv.of.fun.of.u}
    u + \frac{\int_{0}^{\infty}z^{\nu}(z - u) \exp\left\{-\frac{1}{2}(z-u)^2\right\}\mathrm{d}z - \int_{0}^{\infty}z^{\nu}(z + u) \exp\left\{-\frac{1}{2}(z+u)^2\right\}\mathrm{d}z}{\int_{0}^{\infty}z^{\nu}\exp\left\{-\frac{1}{2}(z-u)^2\right\}\mathrm{d}z + \int_{0}^{\infty}z^{\nu}\exp\left\{-\frac{1}{2}(z+u)^2\right\}\mathrm{d}z}.
  \end{align}
  We need to show that \eqref{eq:deriv.of.fun.of.u} is always greater than 0. Since the denominator of the fraction in \eqref{eq:deriv.of.fun.of.u} is greater than 0, \eqref{eq:deriv.of.fun.of.u} is greater than 0 if and only if
  \begin{align}
    \begin{split}
      0 &< u\left[\int_{0}^{\infty}z^{\nu}\exp\left\{-\frac{1}{2}(z-u)^2\right\}\mathrm{d}z + \int_{0}^{\infty}z^{\nu}\exp\left\{-\frac{1}{2}(z+u)^2\right\}\mathrm{d}z\right]\\
      &+ \int_{0}^{\infty}z^{\nu}(z - u) \exp\left\{-\frac{1}{2}(z-u)^2\right\}\mathrm{d}z - \int_{0}^{\infty}z^{\nu}(z + u) \exp\left\{-\frac{1}{2}(z+u)^2\right\}\mathrm{d}z
    \end{split}\\
    \label{eq:int.z.e.diff}
    \Leftrightarrow 0 &< \int_{0}^{\infty}z^{\nu+1}\exp\left\{-\frac{1}{2}(z-u)^2\right\}\mathrm{d}z - \int_{0}^{\infty}z^{\nu+1}\exp\left\{-\frac{1}{2}(z+u)^2\right\}\mathrm{d}z.
  \end{align}

  We can show that the difference in integrands in \eqref{eq:int.z.e.diff} is point-wise greater than 0 for all $z > 0$, so the difference of the integrals is greater than 0.
  \begin{align}
    \label{eq:z.e.pointwise}
    &z^{\nu+1}\exp\left\{-\frac{1}{2}(z-u)^2\right\} - z^{\nu+1}\exp\left\{-\frac{1}{2}(z+u)^2\right\} > 0\\
    \Leftrightarrow& \exp\left\{-\frac{1}{2}(z-u)^2\right\} > \exp\left\{-\frac{1}{2}(z+u)^2\right\}\\
    \Leftrightarrow& (z-u)^2 < (z+u)^2\\
    \Leftrightarrow& z^2 + u^2 - 2zu < z^2 + u^2 + 2zu\\
    \label{eq:z.u.times}
    \Leftrightarrow& -zu < zu.
  \end{align}
  Equation \eqref{eq:z.u.times} is true since both $z$ and $u$ are positive. Thus, integrating both sides of \eqref{eq:z.e.pointwise} indicates that \eqref{eq:int.z.e.diff} is positive, which indicates that the derivative \eqref{eq:fun.of.u} is positive, so \eqref{eq:fun.of.u} is monotone increasing in $u > 0$. Thus, \eqref{eq:log.t.diff.ratio} is monotone increasing in $t > 0$, indicating that $g(t)$ is monotone increasing in $|t|$ for any choice of symmetric $\pi(\cdot)$.
\end{proof}

\paragraph{Theorem~\ref{theo:teta}}
\begin{proof}
  From \eqref{eq:welford.mean} and \eqref{eq:welford.var}, we see that
  $\hat{\mu} = \bar{X} + \mathcal{O}_p\left(\frac{1}{n}\right)$ and
  $\hat{\sigma}^2 = S^2 + \mathcal{O}_p\left(\frac{1}{n}\right)$. Thus,
  \begin{align}
    \Pr\left( \frac{(\hat{\mu} - \mu)^2}{\hat{\sigma}^2 / (n+1)} \geq \eta^2 \right) &= \Pr\left( \frac{\left(\bar{X} + \mathcal{O}_p\left(\frac{1}{n}\right) - \mu\right)^2}{\left[S^2 + \mathcal{O}_p\left(\frac{1}{n}\right)\right] / (n+1)} \geq \eta^2 \right)\\
                                                                                     &= \Pr\left( \frac{\left(\bar{X} - \mu + \mathcal{O}_p\left(\frac{1}{n}\right)\right)^2}{S^2 / (n+1) + \mathcal{O}_p\left(\frac{1}{n^2}\right)} \geq \eta^2 \right)\\
    \label{eq:s2.np1.to.n}
                                                                                     &= \Pr\left( \frac{\left(\bar{X} - \mu + \mathcal{O}_p\left(\frac{1}{n}\right)\right)^2}{S^2 / n + \mathcal{O}_p\left(\frac{1}{n^2}\right)} \geq \eta^2 \right)\\
                                                                                     &= \Pr\left( \frac{\left(\frac{\bar{X} - \mu}{S/\sqrt{n}} + \mathcal{O}_p\left(\frac{1}{\sqrt{n}}\right)\right)^2}{1 + \mathcal{O}_p\left(\frac{1}{n}\right)} \geq \eta^2 \right)\\
                                                                                     &= \Pr\left( T^2\frac{\left(1 + \mathcal{O}_p\left(\frac{1}{\sqrt{n}}\right)\right)^2}{1 + \mathcal{O}_p\left(\frac{1}{n}\right)} \geq \eta^2 \right) ~\left(\text{where } T = \frac{\bar{X} - \mu}{S/\sqrt{n}}\right)\\
                                                                                     &= \Pr\left( T^2\left(1 + \mathcal{O}_p\left(\frac{1}{\sqrt{n}}\right)\right) \geq \eta^2 \right)\\
    \label{eq:t2.eta2.op}
                                                                                     &= \Pr\left( T^2 + \mathcal{O}_p\left(\frac{1}{\sqrt{n}}\right) \geq \eta^2 \right),
  \end{align}
  where \eqref{eq:s2.np1.to.n} uses the fact that
  $S^2/(n+1) = S^2/n + \mathcal{O}_p\left(\frac{1}{n^2}\right)$. Since $T$ follows a $t_{n-1}$ distribution, the result follows from \eqref{eq:t2.eta2.op}.
\end{proof}

\paragraph{Theorem~\ref{theo:squared.half.width}}
\begin{proof}
\begin{align}
  &\E\left[\eta_{\alpha}^2\left(\frac{n-1}{n}S^2 + \frac{1}{n+1}(\bar{X} - A)^2\right) / (n+1)\right]\\
  &=\frac{\eta_{\alpha}^2\sigma^2}{n(n+1)}\E\left[(n-1)S^2/\sigma^2\right] + \frac{\eta_{\alpha}^2\sigma^2}{n(n+1)^2}\E\left[\left(\frac{\bar{X} - \mu}{\sigma/\sqrt{n}} + \sqrt{n}\frac{\mu - A}{\sigma}\right)^2\right]\\
  &= \frac{n-1}{n(n+1)}\eta_{\alpha}^2\sigma^2 + \frac{1 + n\nu^2}{n(n+1)^2}\eta_{\alpha}^2\sigma^2\\
  &= \frac{n + \nu^2}{(n+1)^2}\eta_{\alpha}^2\sigma^2.
\end{align}
\end{proof}

\section{Conditions for marginal tail probability approximation}
\label{sec:marg.tail}

Some additional conditions on $\rho(\cdot)$ are needed for
\eqref{eq:marg.tail.prob.text} to hold for a given proper $\pi(\cdot)$. This is
since the remainder term in the Taylor Series
\eqref{eq:taylor.series.orig.proof} could blow up when integrating
over $\nu$. Theorem~\ref{theo:reg.cond.marg.theo} provides sufficient
(but not necessary) regularity conditions on $\rho(\cdot)$ for a given $\pi(\cdot)$.

\begin{theorem}
  \label{theo:reg.cond.marg.theo}
  Suppose that $\rho(\cdot)$ is symmetric, once differentiable, and for a
  given proper $\pi(\cdot)$,
  \begin{align}
    \label{eq:reg.cond.1}
    \int_{0}^{\infty}\nu\rho\left(\nu\right)\pi(\nu)\mathrm{d}\nu &< \infty,
  \end{align}
  and there exists a $z_0 > 0$ such that
  \begin{align}
    \label{eq:reg.cond.2}
    \int_{0}^{\infty}\nu^2M_{z_0}(\nu)\pi(\nu)\mathrm{d}\nu &< \infty, \text{ where},\\
    \label{eq:lipschitz}
    M_{z_0}(\nu) &= \sup_{|u+\nu| \leq \nu/z_0}|\rho'(u)|.
  \end{align}
  Then the marginal posterior tail probability of $\beta$ given $Y$ is
  \begin{align}
    \label{eq:marg.tail.prob}
    \Pr(\beta > z|Y) = \frac{|Y|}{z}\int_{0}^{\infty}\nu\rho\left(\nu\right)\pi(\nu)\mathrm{d}\nu + \mathcal{O}\left(\frac{1}{z^2}\right).
  \end{align}
\end{theorem}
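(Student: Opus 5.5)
We start from the exact representation used in the proof of Theorem~\ref{theo:tail.prob}. With $w = 1/z$ and $t = 1/\beta$, equation \eqref{eq:int.a.given.1.over.z} gives
\begin{align}
  \Pr(\beta > z|\nu,Y) = \int_0^{w}\nu|Y|\rho(t\nu Y - \nu)\,\mathrm{d}t .
\end{align}
Instead of quoting Taylor's theorem abstractly, we write the remainder explicitly so that its dependence on $\nu$ is visible:
\begin{align}
  R(z,\nu) = \int_0^{w}\nu|Y|\left[\rho(t\nu Y-\nu)-\rho(-\nu)\right]\mathrm{d}t .
\end{align}
By Theorem~\ref{theo:post.ni1}, the marginal posterior of $\nu$ is $\pi(\nu)$. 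Tonelli's theorem (the integrand is nonnegative) therefore gives
\begin{align}
  \Pr(\beta>z|Y)=\int_0^\infty \Pr(\beta>z|\nu,Y)\pi(\nu)\,\mathrm{d}\nu .
\end{align}
Splitting the integrand into the leading term $\frac{|Y|}{z}\nu\rho(-\nu) = \frac{|Y|}{z}\nu\rho(\nu)$ (by symmetry) plus $R(z,\nu)$, condition \eqref{eq:reg.cond.1} makes the leading part finite and equal to the main term in \eqref{eq:marg.tail.prob}. It remains to show that $\int_0^\infty |R(z,\nu)|\pi(\nu)\,\mathrm{d}\nu = \mathcal{O}(1/z^2)$.

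For the remainder we use the mean value theorem. For $0\le t\le w$, the point $u = t\nu Y-\nu$ satisfies $|u+\nu| = t\nu|Y| \le \nu|Y|/z$, so
\begin{align}
  |\rho(t\nu Y-\nu)-\rho(-\nu)| \le t\nu|Y|\sup_{|u+\nu|\le \nu|Y|/z}|\rho'(u)| .
\end{align}
Once $z \ge z_0|Y|$, we have $\nu|Y|/z \le \nu/z_0$, so this supremum is at most $M_{z_0}(\nu)$ as defined in \eqref{eq:lipschitz}. Integrating over $t\in[0,w]$ then gives
\begin{align}
  |R(z,\nu)| \le \nu^2Y^2 M_{z_0}(\nu)\,\frac{w^2}{2} = \frac{Y^2}{2z^2}\,\nu^2M_{z_0}(\nu).
\end{align}
Integrating against $\pi(\nu)$ and applying \eqref{eq:reg.cond.2} yields $\frac{Y^2}{2z^2}\int_0^\infty\nu^2M_{z_0}(\nu)\pi(\nu)\,\mathrm{d}\nu = \mathcal{O}(1/z^2)$, with a constant depending only on $Y$. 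This proves \eqref{eq:marg.tail.prob}.

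The main obstacle is getting a bound on the remainder that is uniform in $\nu$ and integrable against the prior. A generic Taylor remainder for fixed $\nu$ gives no such control, which is why we work with the explicit integral form of $R$ and the $\nu$-dependent Lipschitz bound $M_{z_0}$. One detail needs care: the neighbourhood in \eqref{eq:lipschitz} has radius $\nu/z_0$, so the threshold on $z$ must be stated as $z \ge z_0|Y|$. This is harmless because the statement is asymptotic as $z\to\infty$ with $Y$ fixed, but the proof should say so explicitly. The case $Y=0$ has probability zero and can be ignored. The sign of $Y$ does not matter, since only $|t\nu Y|$ enters the bound.
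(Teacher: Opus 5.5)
Your proposal is correct and follows essentially the same route as the paper. Both write the remainder as the explicit integral $|Y|\nu\int_0^{1/z}[\rho(t\nu Y-\nu)-\rho(-\nu)]\,\mathrm{d}t$, apply the mean value theorem with the threshold $z > z_0|Y|$ to land in the neighbourhood defining $M_{z_0}(\nu)$, and integrate against $\pi(\nu)$ using the two regularity conditions. Your constant $\frac{Y^2}{2z^2}$ is simply the paper's bound with the exact value of $\int_0^{1/z}t\,\mathrm{d}t$ kept, where the paper drops the factor $1/2$.
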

\begin{proof}
  As in the proof of Theorem~\ref{theo:tail.prob}, we set $w = 1/z$
  and $t = 1/\beta$, then we do a Taylor series of the tail
  probability at $w=0$ (the same as $z = \infty$). This time, however,
  we use an explicit form for the remainder.
  \begin{align}
    \Pr(\beta > z|\nu,Y) &= \int_{z}^{\infty}\frac{\nu |Y|}{\beta^2}\rho\left(\nu\frac{Y}{\beta}-\nu\right)\mathrm{d}\beta\\
                         &= \int_0^{1/z}\nu |Y|\rho\left(t\nu Y-\nu\right)\mathrm{d}t\\
    \label{eq:expaneded.int.about.0}
    &= \frac{|Y|}{z}\nu\rho(\nu) +  |Y|\nu\int_0^{1/z}[\rho\left(t\nu Y-\nu\right) - \rho(-\nu)]\mathrm{d}t.
  \end{align}
  We will work to bound the remainder in
  \eqref{eq:expaneded.int.about.0}. By the mean value theorem, there
  exists a $\xi(t) \in [-\nu, t \nu Y - \nu]$ such that
  \begin{align}
    \label{eq:mvt.app}
    \rho(t \nu Y - \nu) - \rho(-\nu) = \nu Y t \rho'(\xi(t)).
  \end{align}
  The integrand in \eqref{eq:expaneded.int.about.0} has
  $t \in [0, 1/z]$. So, for such $t$, and for $z > z_0 |Y|$, we have
  \begin{align}
    |\xi(t) + \nu| \leq |\nu Y t| \leq \frac{|\nu Y|}{z} \leq \frac{\nu}{z_{0}}.
  \end{align}
  Thus, for large enough $z$, $\xi(t)$ is within the neighborhood of
  the supremum of \eqref{eq:lipschitz}. This means, for large enough
  $z$, the remainder in \eqref{eq:expaneded.int.about.0} is bounded by
  \begin{align}
    \left| |Y|\nu\int_0^{1/z}[\rho\left(t\nu Y-\nu\right) - \rho(-\nu)]\mathrm{d}t \right|&= Y^2\nu^2\left|\int_0^{1/z} t\rho'(\xi(t)) \mathrm{d}t \right|\\
                                                                                            &\leq Y^2\nu^2M_{z_0}(\nu)\int_0^{1/z} t \mathrm{d}t\\
                                                                                            &\leq \frac{Y^2}{z^2}\nu^2M_{z_0}(\nu).
  \end{align}

  We integrate \eqref{eq:expaneded.int.about.0} with respect to $\nu$,
  \begin{align}
    \label{eq:marg.prior.to.simplification}
    \Pr(\beta > z | Y) &= \frac{|Y|}{z}\int_{0}^{\infty}\nu\rho(\nu)\pi(\nu)\mathrm{d}\nu +   \int_{0}^{\infty}|Y|\nu\int_0^{1/z}[\rho\left(t\nu Y-\nu\right) - \rho(-\nu)]\mathrm{d}t\pi(\nu)\mathrm{d}\nu.
  \end{align}
  Regularity condition \eqref{eq:reg.cond.1} guarantees that the
  leading term in \eqref{eq:marg.prior.to.simplification} is
  finite. We will now bound the remainder in
  \eqref{eq:marg.prior.to.simplification}.
  \begin{align}
    \label{eq:bound.marg.remainder}
    &\left|\int_{0}^{\infty}|Y|\nu\int_0^{1/z}[\rho\left(t\nu Y-\nu\right) - \rho(-\nu)]\mathrm{d}t\pi(\nu)\mathrm{d}\nu\right| \leq \frac{Y^2}{z^2}\int_{0}^{\infty}\nu^2M_{z_0}(\nu)\pi(\nu)\mathrm{d}\nu.
  \end{align}
  Regularity condition \eqref{eq:reg.cond.2} guarantees that the
  remainder in \eqref{eq:bound.marg.remainder} is
  $\mathcal{O}\left(\frac{1}{z^2}\right)$. This finishes the proof of
  \eqref{eq:marg.tail.prob}.
\end{proof}

Regularity condition \eqref{eq:reg.cond.2} is rather complicated and
hard to verify. So we have Lemma~\ref{lemma:simp.reg} that contains a
simpler condition that implies \eqref{eq:reg.cond.2} for any proper
$\pi(\cdot)$.

\begin{lemma}
  \label{lemma:simp.reg}
  Suppose
  \begin{align}
    \label{eq:simpler.reg.cond.2}
    \sup_{u\in\mathbb{R}} u^2|\rho'(u)| &< \infty,
  \end{align}
  then regularity condition \eqref{eq:reg.cond.2} is satisfied for any
  proper $\pi(\cdot)$.
\end{lemma}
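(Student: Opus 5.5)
The plan is to bound $M_{z_0}(\nu)$ pointwise by a constant multiple of $\nu^{-2}$. Once that is done, $\nu^2 M_{z_0}(\nu)$ is bounded uniformly in $\nu>0$, and integrating a bounded function against a proper prior is finite. So \eqref{eq:reg.cond.2} will hold for every proper $\pi(\cdot)$.

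Let $C = \sup_{u\in\mathbb{R}} u^2|\rho'(u)| < \infty$. I would fix any $z_0 > 1$; the condition only asks for the existence of some $z_0$, so $z_0 = 2$ will do. For $\nu>0$ and any $u$ with $|u+\nu| \leq \nu/z_0$, the reverse triangle inequality gives
\begin{align}
  |u| \geq \nu - |u+\nu| \geq \nu\left(1 - \frac{1}{z_0}\right) = \frac{\nu}{2}.
\end{align}
On this neighborhood, therefore,
\begin{align}
  |\rho'(u)| \leq \frac{C}{u^2} \leq \frac{4C}{\nu^2},
\end{align}
and hence $M_{z_0}(\nu) \leq 4C/\nu^2$. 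This gives $\nu^2 M_{z_0}(\nu) \leq 4C$ for all $\nu>0$, and so
\begin{align}
  \int_0^\infty \nu^2 M_{z_0}(\nu)\pi(\nu)\,\mathrm{d}\nu \leq 4C\int_0^\infty \pi(\nu)\,\mathrm{d}\nu = 4C < \infty .
\end{align}

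The one point needing care, and really the only potential obstacle, is choosing $z_0$ so that the neighborhood $\{u : |u+\nu|\le \nu/z_0\}$ stays bounded away from $u=0$. That choice is what makes the $u^{-2}$ decay usable uniformly in $\nu$, including as $\nu \to 0$. With $z_0 \le 1$ the neighborhood could contain $u=0$, and the bound would then rely only on $\sup|\rho'|$, which the hypothesis does not directly supply. Taking $z_0 = 2$ avoids this. Measurability of $M_{z_0}$ is harmless since it is a supremum of $|\rho'|$ over a continuously moving interval (if $\rho'$ is continuous, $M_{z_0}$ is continuous; otherwise one can use the upper bound $4C/\nu^2$ directly, with outer integrals).
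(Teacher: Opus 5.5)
Your proposal is correct and follows the paper's proof: fix $z_0>1$, use the reverse triangle inequality to get $|u|\geq(1-1/z_0)\nu$ on the neighborhood, bound $\nu^2 M_{z_0}(\nu)$ by $C/(1-1/z_0)^2$, and integrate against the proper prior. The only difference is that the paper also checks the point $\nu=0$ separately, where $\nu^2 M_{z_0}(\nu)=0$ because $\rho'(0)$ is finite; you are right that this point does not affect the integral.
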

\begin{proof}
  For \eqref{eq:reg.cond.2}, we will bound
  $M_{z_0}(\nu) = \sup_{|u+\nu| \leq \nu/z_0}|\rho'(u)|$ for large
  enough $z_0$. Suppose $z_0 > 1$ and $|u+\nu| \leq \nu/z_0$. Then
  \begin{align}
    |u| &\geq \nu - |u + \nu| \text{ (reverse triangle inequality)}\\
        &\geq \nu - \nu/z_0 \text{ (neighborhood of supremum)}\\
        &=(1-1/z_0)\nu\\
    \label{eq:lower.bound.u}
        &=\alpha\nu,
  \end{align}
  for $\alpha = 1 - 1 / z_0 \in (0, 1)$.

  Thus, given $|u+\nu| \leq \nu/z_0$ and $z_0 > 1$, we have, for some finite $K$, and for $\nu \neq 0$,
  \begin{align}
    |\rho'(u)| &\leq \frac{K}{u^2} \text{ (from \eqref{eq:simpler.reg.cond.2})}\\
    \leq&\frac{K}{\alpha^2\nu^2} \text{ (from \eqref{eq:lower.bound.u})}.
  \end{align}
  Thus, for $\nu \neq 0$,
  \begin{align}
    M_{z_0}(\nu) &= \sup_{|u+\nu| \leq \nu/z_0}|\rho'(u)|\\
                 &\leq \frac{K}{\alpha^2\nu^2}\\
    \label{eq:bound.on.nu2.m}
    \Leftrightarrow & \nu^2M_{z_0}(\nu) \leq \frac{K}{\alpha^2} = \frac{K}{(1 - 1/z_0)^2}.
  \end{align}
  For $\nu = 0$, bound \eqref{eq:bound.on.nu2.m} is still satisfied because we assumed that $\rho(\cdot)$ is once differentiable, so $M_{z_0}(\nu) = |\rho'(0)| < \infty$.

  Getting back to regularity condition \eqref{eq:reg.cond.2},
  \begin{align}
    \int_{0}^{\infty}\nu^2M_{z_0}(\nu)\pi(\nu)\mathrm{d}\nu \leq \frac{K}{(1 - 1/z_0)^2}\int_{0}^{\infty}\pi(\nu)\mathrm{d}\nu = \frac{K}{(1 - 1/z_0)^2} < \infty.
  \end{align}
\end{proof}

We'll use Lemma~\ref{lemma:simp.reg} to prove that the normal
distribution satisfies \eqref{eq:marg.tail.prob.text} for all proper
$\pi(\cdot)$.

\begin{theorem}
  \label{theo:normal.reg.cond}
  Let $\rho(\cdot) = \phi(\cdot)$ be the standard normal probability density
  function. Then regularity conditions
  \eqref{eq:reg.cond.1}--\eqref{eq:reg.cond.2} are satisfied for any
  proper prior $\pi(\cdot)$ over $\nu$.
\end{theorem}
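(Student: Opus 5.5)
We verify the two conditions of Theorem~\ref{theo:reg.cond.marg.theo} separately. The route is Lemma~\ref{lemma:simp.reg} for \eqref{eq:reg.cond.2}, together with a uniform bound on $\nu\phi(\nu)$ for \eqref{eq:reg.cond.1}. The symmetry and once-differentiability hypotheses hold trivially for $\phi$, since it is even and $C^\infty$.

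For \eqref{eq:reg.cond.1}, note that $\nu\phi(\nu) = (2\pi)^{-1/2}\nu e^{-\nu^2/2}$. Its derivative is $(2\pi)^{-1/2}(1-\nu^2)e^{-\nu^2/2}$, so the maximum on $[0,\infty)$ occurs at $\nu = 1$, with value $(2\pi e)^{-1/2}$. Hence
\begin{align}
  \int_0^\infty \nu\phi(\nu)\pi(\nu)\,\mathrm{d}\nu \leq (2\pi e)^{-1/2}\int_0^\infty \pi(\nu)\,\mathrm{d}\nu = (2\pi e)^{-1/2} < \infty,
\end{align}
using only that $\pi(\cdot)$ is proper.

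For \eqref{eq:reg.cond.2}, it suffices by Lemma~\ref{lemma:simp.reg} to show $\sup_{u\in\mathbb{R}} u^2|\phi'(u)| < \infty$. Since $\phi'(u) = -u\phi(u)$, we have $u^2|\phi'(u)| = (2\pi)^{-1/2}|u|^3 e^{-u^2/2}$. This function is continuous and tends to $0$ as $|u|\to\infty$, so it is bounded. Explicitly, the derivative of $u^3e^{-u^2/2}$ vanishes at $u^2 = 3$, which gives the bound $(2\pi)^{-1/2}3^{3/2}e^{-3/2}$. Lemma~\ref{lemma:simp.reg} then yields \eqref{eq:reg.cond.2} for any proper $\pi(\cdot)$, with some $z_0 > 1$.

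There is no real obstacle. The only point needing care is that Lemma~\ref{lemma:simp.reg} is stated with the supremum over all of $\mathbb{R}$, whereas the text's condition \eqref{eq:reg.cond.cont} uses $u \geq 0$. For $\phi$ these agree by symmetry, since $|\phi'|$ is even.
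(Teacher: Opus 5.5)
Your proof is correct and takes the same approach as the paper. The paper bounds $\nu\phi(\nu)$ by $\phi(1) = (2\pi e)^{-1/2}$ and uses properness of $\pi(\cdot)$ to get \eqref{eq:reg.cond.1}, then obtains \eqref{eq:reg.cond.2} from Lemma~\ref{lemma:simp.reg} via $u^2|\phi'(u)| = |u|^3\phi(u) \leq 3^{3/2}\phi(\sqrt{3})$, which is exactly your constant.
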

\begin{proof}
  Condition \eqref{eq:reg.cond.1} can be proven by noting that, in the normal case, $|\nu|\phi(\nu) \leq \phi(1)$ for all $\nu$. Thus
  \begin{align}
    \int_{0}^{\infty}|\nu|\phi\left(\nu\right)\pi(\nu)\mathrm{d}\nu \leq \phi(1)\int_{0}^{\infty}\pi(\nu)\mathrm{d}\nu = \phi(1) < \infty.
  \end{align}

  We will now use the Lemma~\ref{lemma:simp.reg} to prove regularity
  condition \eqref{eq:reg.cond.2}. For \eqref{eq:simpler.reg.cond.2},
  we have
  $x^2|\phi'(x)| = |x^3|\phi(x) \leq 3^{3/2}\phi(\sqrt{3}) < \infty$, so
  \eqref{eq:simpler.reg.cond.2} is satisfied.
\end{proof}

\section{Properties of inverse location-scale families}
\label{sec:inv.norm.properties}

Let $\rho(\cdot)$ be the standard PDF of a continuous symmetric
location-scale family (e.g., standard normal). Suppose $1/Z$ belongs
to this family with location $a$ and scale $b$. Then the PDF of $Z$ is
(through a simple change of variables)
\begin{align}
  \frac{1}{bz^2}\rho\left(\frac{1/z - a}{b}\right).
\end{align}
In this section, we derive the CDF and quantile function for the distribution of $1/Z$, which we call an inverse location-scale distribution.

Let $F(\cdot)$ be the CDF of the standard distribution in the
location-scale family (e.g., in the normal case $\Phi(\cdot)$). The
CDF, $\Pr(Z \leq z)$ can be found considering two cases
(Figure~\ref{fig:inv.norm.bounds}). First, assume $z \leq 0$, then
\begin{align}
  \label{eq:inv.norm.bound.1}
  \Pr(Z \leq z) &= \Pr\left(\frac{1}{z} \leq \frac{1}{Z} \leq 0\right)\\
                &= \Pr\left(\frac{1/z - a}{b} \leq \frac{1/Z - a}{b} \leq -\frac{a}{b}\right)\\
                &= F\left(-\frac{a}{b}\right) - F\left(\frac{1/z - a}{b}\right)\\
  \label{eq:cdf.inv.norm.negz}
                &= F\left(\frac{az - 1}{bz}\right) - F\left(\frac{a}{b}\right).
\end{align}
Second, assume $z \geq 0$, then
\begin{align}
  \label{eq:inv.norm.bound.2}
  \Pr(Z \leq z) &= \Pr\left(\frac{1}{Z} \leq 0\right) + \Pr\left(\frac{1}{z} \leq \frac{1}{Z}\right)\\
                &= \Pr\left(\frac{1/Z - a}{b} \leq -\frac{a}{b}\right) + \Pr\left(\frac{1/z - a}{b} \leq \frac{1/Z - a}{b}\right)\\
                &= F\left(-\frac{a}{b}\right) + 1 - F\left(\frac{1/z - a}{b}\right)\\
  \label{eq:cdf.inv.norm.posz}
                &= 1 + F\left(\frac{az - 1}{bz}\right) - F\left(\frac{a}{b}\right).
\end{align}

\begin{figure}[!htb]
  \centering
  \includegraphics{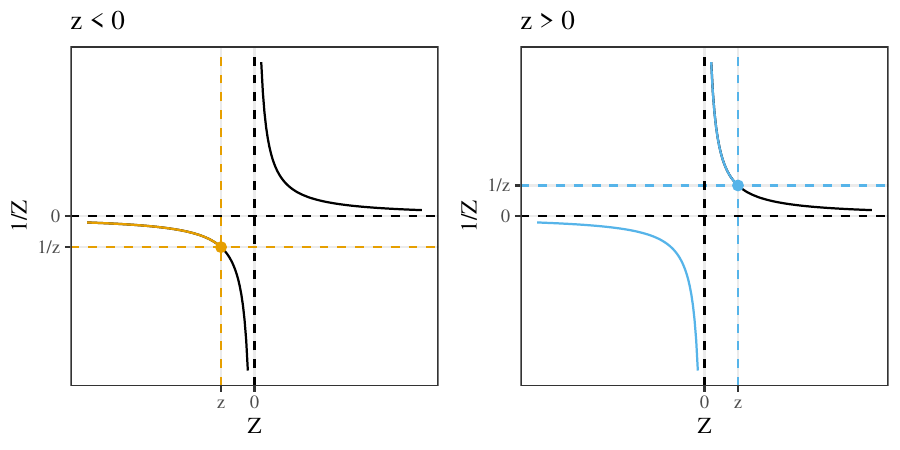}
  \caption{Visualization of the calculations in
    \eqref{eq:inv.norm.bound.1} and \eqref{eq:inv.norm.bound.2}. In
    the left facet, where $z < 0$, we have $Z \leq z$ if and only if
    $1/z \leq 1 / Z \leq 0$. In the right facet, where $z > 0$, we
    have $Z\leq z$ if and only if either $1/Z < 0$ or $1/z < 1/Z$.}
  \label{fig:inv.norm.bounds}
\end{figure}

Let $F^{-1}(\cdot)$ be the quantile function of the standard
distribution in the location-scale family (e.g., in the normal case
$\Phi^{-1}(\cdot)$). For the quantile function, let $p = \Pr(Z \leq z)$
taking $z \rightarrow 0$ from the left in \eqref{eq:cdf.inv.norm.negz}
or from the right in \eqref{eq:cdf.inv.norm.posz} shows that $z < 0$
if and only if $p < 1 - F\left(\frac{a}{b}\right)$. Solving for $z$
in \eqref{eq:cdf.inv.norm.negz} and \eqref{eq:cdf.inv.norm.posz} gives
us the quantile function
\begin{align}
  \label{eq:quant.invnorm}
  f(p) =
  \begin{cases}
    \left[a - bF^{-1}\left(p + F\left(\frac{a}{b}\right)\right)\right]^{-1} & \text{ if } p \leq 1 - F\left(\frac{a}{b}\right)\\
    \left[a - bF^{-1}\left(p + F\left(\frac{a}{b}\right) - 1\right)\right]^{-1} & \text{ if } p > 1 - F\left(\frac{a}{b}\right).
  \end{cases}
\end{align}

\section{Inverted posterior density ratio test approaches}
\label{sec:pdr}

Note that $(X + A)/2$ and $|X-A|/\sqrt{2}$ are the sample mean and
sample standard deviation of $X$ and $A$. Interval
\eqref{eq:ci.init.2} can therefore be seen as an ``augmented''
$t$-interval using data $X$ and $A$ (mean plus or minus some multiple
of the sample standard deviation). The multiplier is determined by
bounding the error probability below $\alpha$.

Using this insight, there is a Bayesian interpretation of
\eqref{eq:ci.init.2} in terms of posterior density ratios
\citep{basu1996bayesian}. An interval from the posterior density ratio
is of the form
\begin{align}
  \label{eq:pdr.profile}
  \mu : \frac{\max_{\sigma^2}f(X|\mu,\sigma^2)\pi(\mu,\sigma^2)}{\max_{\mu,\sigma^2}f(X|\mu,\sigma^2)\pi(\mu,\sigma^2)} > \xi,
\end{align}
for some $\xi$ chosen to control the error probability. Plugging in
the normal likelihood and using prior
\begin{align}
  \label{eq:prior.pdr}
  \pi(\mu, \sigma^2) = \N(\mu|A, \sigma^2),
\end{align}
we get
\begin{align}
  &\mu : \frac{\max_{\sigma^2}\N(X|\mu,\sigma^2)\N(\mu|A,\sigma^2)}{\max_{\mu,\sigma^2}\N(X|\mu,\sigma^2)\N(\mu|A,\sigma^2)} > \xi\\
  \label{eq:pdr.form}
  \Leftrightarrow &\mu : \frac{\max_{\sigma^2}\N(X|\mu,\sigma^2)\N(A|\mu,\sigma^2)}{\max_{\mu,\sigma^2}\N(X|\mu,\sigma^2)\N(A|\mu,\sigma^2)} > \xi.
\end{align}
Equation \eqref{eq:pdr.form} is the same as a likelihood ratio test
statistic using augmented data $X$ and $A$. In the normal case, the
likelihood ratio test is the same as the $t$-test \citep[Section
8.2]{casella2024statistical}. Thus, intervals from PDRs using prior
\eqref{eq:prior.pdr} yield intervals that are exactly of the form
\eqref{eq:ci.init.2}.

Instead of using a ``profile posterior'' approach as in
\eqref{eq:pdr.profile}, we could also take a marginal posterior
approach by first integrating out $\sigma^2$.
\begin{align}
  \label{eq:pdr.profile.marginal}
  \mu : \frac{\int f(X|\mu,\sigma^2)\pi(\mu,\sigma^2)\mathrm{d}\sigma^2}{\max_{\mu}\int f(X|\mu,\sigma^2)\pi(\mu,\sigma^2)\mathrm{d}\sigma^2} > \xi,
\end{align}
for, again, some $\xi$ chosen to control the error probability. For
the marginal approach, we use priors of the form:
\begin{align}
  \label{eq:prior.pdr.marginal}
  \pi(\mu, \sigma^2) = \N(\mu|A, \sigma^2)(\sigma^2)^{-1}.
\end{align}
The complete likelihood is then
\begin{align}
  \pi(X, \mu, \sigma^2) &= \N(X|\mu, \sigma^2) \N(\mu|A,\sigma^2) (\sigma^2)^{-1}\\
  \label{eq:complete.likelihood}
                        &= \N(X|\mu, \sigma^2) \N(A|\mu,\sigma^2) (\sigma^2)^{-1}.
\end{align}
Equation \eqref{eq:complete.likelihood} can be seen, from a different
perspective, as a normal model with augmented data $(X,A)$ and prior
$\pi(\mu, \sigma^2) = (\sigma^2)^{-1}$. It is well known that the
marginal posterior for $\mu$ in this case is Cauchy with location
$(X + A)/2$ and scale $|X - A|/2$ \citep[Section
3.2]{gelman2013bayesian}. Thus, using prior
\eqref{eq:prior.pdr.marginal} and marginal PDR interval
\eqref{eq:pdr.profile.marginal}, we again get intervals of the same
form as the $t$-intervals using augmented data $(X,A)$, and thus
intervals of the form \eqref{eq:ci.init.2}.

This marginal approach extends naturally to larger sample
sizes. Suppose $X_1,\ldots,X_n \overset{\text{iid}}{\sim} \N(\mu, \sigma^2)$ and we have
prior $\pi(\mu, \sigma^2) = \N(\mu|A, \sigma^2)(\sigma^2)^{-1}$. Then
this can be viewed, from another perspective, as using augmented data
$X_1,\ldots,X_n,A \overset{\text{iid}}{\sim} \N(\mu,\sigma^2)$ and prior
$\pi(\mu,\sigma^2) = (\sigma^2)^{-1}$. Well-known results
\citep[Section 3.2]{gelman2013bayesian} are that the marginal
posterior of $\mu$ is
\begin{align}
  T := \frac{\mu - \hat{\mu}}{\hat{\sigma} / \sqrt{n+1}} \sim t_{n},
\end{align}
where $\hat{\mu}$ and $\hat{\sigma}$ are the sample mean and sample
standard deviation of the augmented data $X_1,\ldots,X_n,A$. The
marginal PDR intervals \eqref{eq:pdr.profile.marginal} are then of the form
\begin{align}
  &(1 + T^2 / n)^{-(n+1)/2} > c_1\\
  &\Leftrightarrow T^2 < c_2\\
  &\Leftrightarrow  \left(\frac{\mu - \hat{\mu}}{\hat{\sigma} / \sqrt{n+1}}\right)^2 < c_2\\
  \label{eq:aug.t.int.from.marg.pdr}
  &\Leftrightarrow \mu \in \hat{\mu} \pm c_3 \hat{\sigma}/\sqrt{n+1},
\end{align}
for $c_i$ some constants that control the error probability. Equation
\eqref{eq:aug.t.int.from.marg.pdr} is exactly the form of the
augmented $t$-intervals.

\section{Bayes factors and the augmented $t$-test}
\label{sec:bf.aug.t}

Let $X_1,X_2,\ldots,X_n \overset{\text{iid}}{\sim} N(\mu,\sigma^2)$. We are testing
$H_0: \mu = \mu_0$ versus $H_A: \mu \neq \mu_0$. Let
$\delta = (\mu - \mu_0) / \sigma$. Consider the collection of priors
\begin{align}
  \label{eq:prior.h0}
  \pi(\sigma^2|H_0) &= 1/\sigma^2 \text{ and}\\
  \label{eq:prior.ha}
  \pi(\delta,\sigma^2|H_A) &= \pi(\delta) / \sigma^2.
\end{align}
In this section, we show that the Bayes factor calculated using priors \eqref{eq:prior.h0} and \eqref{eq:prior.ha},
\begin{align}
  \label{eq:bf.x}
  \mathrm{BF} = \frac{\int_{\delta,\sigma^2}\frac{\pi(\delta)}{\sigma^2}\prod_{i=1}^nN(X_i|\mu_0 + \delta\sigma, \sigma^2) \mathrm{d}\sigma^2 \mathrm{d}\delta}{\int_{\sigma^2}\frac{1}{\sigma^2}\prod_{i=1}^nN(X_i|\mu_0, \sigma^2) \mathrm{d}\sigma^2}
\end{align}
is a function of the $t$-statistic,
$t = \frac{\bar{X} - \mu_0}{S / \sqrt{n}}$, where $\bar{X}$ and $S$
are the sample mean and sample standard deviation, respectively. The
results of this section extend to the augmented $t$-statistic results
in Section~\ref{sec:bf.invert} by noting that using priors
\eqref{eq:prior.h0.aug} and \eqref{eq:prior.ha.aug} is equivalent to
the current setup but using data $X_1,X_2,\ldots,X_n,A$. Though, we
also must assume that $A \neq \mu_0$ (a Lebesgue measure 0 event).

Consider the transformed problem
$Y_i = X_i - \mu_0 \sim N(\mu - \mu_0, \sigma^2)$. Let
$\delta = \frac{\mu - \mu_0}{\sigma}$ be the standardized effect size
so that we can write $Y_i \sim N(\sigma\delta, \sigma^2)$. Then we are
equivalently testing $H_0: \delta = 0$ versus $H_A: \delta \neq
0$. The Bayes factor in \eqref{eq:bf.x} is equivalent, using the $Y_i$'s, to
\begin{align}
  \label{eq:bf.y}
  \mathrm{BF} = \frac{\int_{\delta,\sigma^2}\frac{\pi(\delta)}{\sigma^2}\prod_{i=1}^nN(Y_i|\delta\sigma, \sigma^2) \mathrm{d}\sigma^2 \mathrm{d}\delta}{\int_{\sigma^2}\frac{1}{\sigma^2}\prod_{i=1}^nN(Y_i|0, \sigma^2) \mathrm{d}\sigma^2}.
\end{align}

We begin with showing that the likelihood is a function of the sample
mean and sample variance Lemma~\ref{lem:like.y}.
\begin{lemma}
  \label{lem:like.y}
  The likelihood for the data is
  \begin{align}
    f(Y_1,\ldots,Y_n|\delta,\sigma^2) &= \prod_{i=1}^nN(Y_i|\delta\sigma, \sigma^2)\\
                                      &= (2\pi\sigma^2)^{-n/2}\exp\left\{-\frac{1}{2\sigma^2}\left[(n-1)S^2 + n(\bar{Y} - \delta\sigma)^2\right]\right\},
  \end{align}
  where $\bar{Y} = \bar{X} - \mu_0$ is the sample mean of the $Y_i$'s, and $S^2$ is the sample variance of the $Y_i$'s (and $X_i$'s).
\end{lemma}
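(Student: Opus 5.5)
Writing out the product of normal densities gives the statement directly; the only real work is the standard sum-of-squares decomposition around the sample mean. First, by independence,
\begin{align*}
\prod_{i=1}^n N(Y_i|\delta\sigma,\sigma^2) = (2\pi\sigma^2)^{-n/2}\exp\left\{-\frac{1}{2\sigma^2}\sum_{i=1}^n (Y_i-\delta\sigma)^2\right\}.
\end{align*}
So it suffices to show that $\sum_{i=1}^n (Y_i - \delta\sigma)^2 = (n-1)S^2 + n(\bar{Y}-\delta\sigma)^2$.

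To get this identity, add and subtract $\bar{Y}$ inside each square, writing $Y_i - \delta\sigma = (Y_i - \bar{Y}) + (\bar{Y} - \delta\sigma)$. Expanding the square produces three sums.
\begin{itemize}
\item The first is $\sum_i (Y_i-\bar{Y})^2 = (n-1)S^2$, by the definition of the sample variance of the $Y_i$'s.
\item The second is $n(\bar{Y}-\delta\sigma)^2$, since that term does not depend on $i$.
\item The cross term is $2(\bar{Y}-\delta\sigma)\sum_i (Y_i - \bar{Y})$, which vanishes because $\sum_i (Y_i-\bar{Y})=0$.
\end{itemize}

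Finally, I would note two facts about the shift $Y_i = X_i - \mu_0$. The sample mean shifts to $\bar{Y} = \bar{X}-\mu_0$. The sample variance is unchanged, because shifting every observation by the same constant leaves each deviation $Y_i-\bar{Y} = X_i - \bar{X}$ the same. This justifies the parenthetical claim that $S^2$ is the sample variance of both the $Y_i$'s and the $X_i$'s.

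There is no real obstacle here; the only care needed is keeping the $(n-1)$ normalization of $S^2$ consistent with the paper's convention.
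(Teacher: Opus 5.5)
Your proof is correct and follows the paper's argument essentially line for line: write out the product of normal densities, add and subtract $\bar{Y}$ inside each squared term, and observe that the cross term $2(\bar{Y}-\delta\sigma)\sum_i(Y_i-\bar{Y})$ vanishes. Your extra remark that shifting by $\mu_0$ leaves $S^2$ unchanged just makes explicit the parenthetical claim, which the paper states without comment.
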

\begin{proof}
  This is very well known.
  \begin{align}
    f&(Y_1,\ldots,Y_n|\delta,\sigma^2) =\prod_{i=1}^nN(Y_i|\delta\sigma, \sigma^2)\\
     &= (2\pi\sigma^2)^{-n/2}\exp\left\{-\frac{1}{2\sigma^2}\sum_{i=1}^n(Y_i - \delta\sigma)^2\right\}\\
     &= (2\pi\sigma^2)^{-n/2}\exp\left\{-\frac{1}{2\sigma^2}\sum_{i=1}^n(Y_i - \bar{Y} + \bar{Y} -  \delta\sigma)^2\right\}\\
     &= (2\pi\sigma^2)^{-n/2}\exp\left\{-\frac{1}{2\sigma^2}\left[\sum_{i=1}^n(Y_i - \bar{Y})^2 + 2(\bar{Y} -  \delta\sigma)\sum_{i=1}^n(Y_i - \bar{Y}) + n (\bar{Y} -  \delta\sigma)^2\right]\right\}\\
    &= (2\pi\sigma^2)^{-n/2}\exp\left\{-\frac{1}{2\sigma^2}\left[(n-1)S^2 + n(\bar{Y} -  \delta\sigma)^2\right]\right\}.
  \end{align}
\end{proof}

We will now show that the denominator in \eqref{eq:bf.y} is proportional to a $t$-density with $n-1$ degrees of freedom.
\begin{lemma}
  \label{lem:t.integral}
  \begin{align}
    \label{eq:t.integral}
    \int_{\sigma^2}\frac{1}{\sigma^2}\prod_{i=1}^nN(Y_i|0, \sigma^2) \mathrm{d}\sigma^2 = [(n-1)\pi]^{-\frac{n-1}{2}}[S^2]^{-\frac{n}{2}}\Gamma\left(\frac{n-1}{2}\right)T_{n-1}(t),
  \end{align}
  where $T_{n-1}(\cdot)$ is the $t$-density with $n-1$ degrees of freedom
  and
  $t = \frac{\bar{Y}}{S/\sqrt{n}} = \frac{\bar{X} -
    \mu_0}{S/\sqrt{n}}$ is the $t$-statistic for testing
  $H_0: \mu = \mu_0$ versus $H_A: \mu \neq \mu_0$.
\end{lemma}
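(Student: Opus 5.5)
Under $H_0$ each $Y_i\sim N(0,\sigma^2)$, so I would start from Lemma~\ref{lem:like.y} with $\delta=0$. That gives
\begin{align*}
\frac{1}{\sigma^2}\prod_{i=1}^nN(Y_i|0,\sigma^2) = (2\pi)^{-n/2}(\sigma^2)^{-n/2-1}\exp\left\{-\frac{Q}{2\sigma^2}\right\}, \quad Q = (n-1)S^2 + n\bar{Y}^2 .
\end{align*}
As a function of $\sigma^2$ this is an inverse-gamma kernel with shape $n/2$ and scale $Q/2$. The inverse-gamma normalizing constant then gives
\begin{align*}
\int_0^\infty (\sigma^2)^{-n/2-1}e^{-Q/(2\sigma^2)}\,\mathrm{d}\sigma^2 = \Gamma(n/2)(Q/2)^{-n/2},
\end{align*}
so the integral equals $\pi^{-n/2}\Gamma(n/2)Q^{-n/2}$.

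Next I would rewrite $Q$ in terms of the $t$-statistic. Using $n\bar{Y}^2 = S^2t^2$, we get
\begin{align*}
Q = S^2\left[(n-1)+t^2\right] = (n-1)S^2\left(1+\frac{t^2}{n-1}\right).
\end{align*}
Substituting, the integral becomes
\begin{align*}
\pi^{-n/2}\Gamma(n/2)\left[(n-1)S^2\right]^{-n/2}\left(1+\frac{t^2}{n-1}\right)^{-n/2}.
\end{align*}
The key observation is that $(1+t^2/(n-1))^{-n/2}$ is exactly the kernel of $T_{n-1}(t)$, whose exponent is $-((n-1)+1)/2$.

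Finally I would replace that kernel by $T_{n-1}(t)$ divided by its normalizing constant. Since
\begin{align*}
T_{n-1}(t) = \frac{\Gamma(n/2)}{\sqrt{(n-1)\pi}\,\Gamma((n-1)/2)}\left(1+\frac{t^2}{n-1}\right)^{-n/2},
\end{align*}
the factor $\Gamma(n/2)$ cancels and a $\Gamma((n-1)/2)$ appears. The remaining powers collect as $\pi^{-n/2}\pi^{1/2}=\pi^{-(n-1)/2}$, $(n-1)^{-n/2}(n-1)^{1/2}=(n-1)^{-(n-1)/2}$, and $[S^2]^{-n/2}$, which together give \eqref{eq:t.integral}.

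There is no real obstacle here. The only step needing care is this last bookkeeping of the powers of $\pi$ and $(n-1)$. The argument also implicitly needs $S^2>0$ (so $n\ge2$, which holds almost surely) for the integral to be finite.
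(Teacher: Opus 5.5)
Your proposal is correct and follows the paper's proof essentially step for step. Both arguments recognize the integrand as an inverse-gamma kernel with shape $n/2$ and scale $[(n-1)S^2+n\bar{Y}^2]/2$, rewrite $n\bar{Y}^2=S^2t^2$ to expose the $t_{n-1}$ kernel, and then multiply and divide by its normalizing constant. One minor quibble: finiteness of the integral only needs $(n-1)S^2+n\bar{Y}^2>0$, whereas $S^2>0$ (hence $n\ge 2$) is what makes the right-hand side well defined.
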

\begin{proof}
  From Lemma~\ref{lem:like.y}, we have
  \begin{align}
    \label{eq:inv.gamma.kern.h0.a}
    \int_{\sigma^2}&\frac{1}{\sigma^2}\prod_{i=1}^nN(Y_i|0, \sigma^2) \mathrm{d}\sigma^2 = \int_{\sigma^2}(2\pi)^{-n/2}(\sigma^2)^{-n/2-1}\exp\left\{-\frac{1}{2\sigma^2}\left[(n-1)S^2 + n\bar{Y}^2\right]\right\}\mathrm{d}\sigma^2\\
    \label{eq:inv.gamma.kern.h0.b}
                   &=(2\pi)^{-n/2}\Gamma\left(\frac{n}{2}\right)2^{n/2}\left[(n-1)S^2 + n\bar{Y}^2\right]^{-n/2}\\
                   &=\Gamma\left(\frac{n}{2}\right)[(n-1)\pi S^2]^{-n/2}\left[1 + \left(\frac{\bar{Y}}{S / \sqrt{n}}\right)^2 / (n-1)\right]^{-n/2}\\
                   &=[(n-1)\pi S^2]^{-n/2} \sqrt{\pi(n-1)}\Gamma\left(\frac{n-1}{2}\right) \frac{\Gamma\left(\frac{(n-1) + 1}{2}\right)}{\sqrt{\pi(n-1)}\Gamma\left(\frac{n-1}{2}\right)} \left[1 +  \frac{t^2}{n-1}\right]^{-\frac{(n-1) + 1}{2}}\\
                   &= [(n-1)\pi]^{-\frac{n-1}{2}}[S^2]^{-\frac{n}{2}}\Gamma\left(\frac{n-1}{2}\right)T_{n-1}(t).
  \end{align}
  Line \eqref{eq:inv.gamma.kern.h0.b} follows since the integrand in
  \eqref{eq:inv.gamma.kern.h0.a} is the kernel of an inverse-gamma
  distribution with shape $\frac{n}{2}$ and scale
  $\frac{1}{2}\left[(n-1)S^2 + n\bar{Y}^2\right]$
\end{proof}

We will now show that the numerator in \eqref{eq:bf.y} is proportional
to an integral over $\delta$ of a non-central $t$-density with $n-1$
degrees of freedom and non-centrality parameter $\sqrt{n}\delta$.
\begin{lemma}
  \label{lem:noncentral.t.integral}
  \begin{align}
    \label{eq:noncentral.t.integral}
    \int_{\sigma^2}\frac{1}{\sigma^2}\prod_{i=1}^nN(Y_i|\delta\sigma, \sigma^2) \mathrm{d}\sigma^2 = [(n-1)\pi]^{-\frac{n-1}{2}}[S^2]^{-\frac{n}{2}}\Gamma\left(\frac{n-1}{2}\right) T_{n-1}(t|\sqrt{n}\delta),
  \end{align}
  where $T_{n-1}(t|\sqrt{n}\delta)$ is the non-central $t$-density
  with $n-1$ degrees of freedom and non-centrality parameter
  $\sqrt{n}\delta$, and
  $t = \frac{\bar{Y}}{S/\sqrt{n}} = \frac{\bar{X} -
    \mu_0}{S/\sqrt{n}}$ is the $t$-statistic for testing
  $H_0: \mu = \mu_0$ versus $H_A: \mu \neq \mu_0$.
\end{lemma}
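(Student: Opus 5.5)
The plan is to make the change of variables $v = (n-1)S^2/\sigma^2$ in the $\sigma^2$-integral, which turns the integrand into the standard mixture representation of the non-central $t$-density. Recall that if $Z \sim \N(\xi,1)$ and $V \sim \chi^2_{n-1}$ are independent, then $Z/\sqrt{V/(n-1)}$ has the non-central $t$ law with $n-1$ degrees of freedom and non-centrality $\xi$. Conditioning on $V$ gives
\begin{align}
  T_{n-1}(t|\xi) = \int_0^\infty \phi\left(t\sqrt{v/(n-1)} - \xi\right)\sqrt{\frac{v}{n-1}}\,\chi^2_{n-1}(v)\,\mathrm{d}v,
\end{align}
with $\chi^2_{n-1}(v) = v^{(n-1)/2-1}e^{-v/2}/\{2^{(n-1)/2}\Gamma(\frac{n-1}{2})\}$. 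I will reduce the left side of \eqref{eq:noncentral.t.integral} to exactly this integral with $\xi = \sqrt{n}\delta$, times a constant.

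\textbf{Step 1 (rewrite the exponent).} Start from Lemma~\ref{lem:like.y}, which writes the integrand as
\begin{align}
  (2\pi)^{-n/2}(\sigma^2)^{-n/2-1}\exp\left\{-\frac{1}{2\sigma^2}\left[(n-1)S^2 + n(\bar{Y}-\delta\sigma)^2\right]\right\}.
\end{align}
Under $v = (n-1)S^2/\sigma^2$ we have $S/\sigma = \sqrt{v/(n-1)}$. Since $t = \sqrt{n}\bar{Y}/S$, it follows that $\sqrt{n}\bar{Y}/\sigma = t\sqrt{v/(n-1)}$. The exponent therefore becomes
\begin{align}
  -\frac{v}{2} - \frac{1}{2}\left(t\sqrt{v/(n-1)} - \sqrt{n}\delta\right)^2.
\end{align}
This has exactly the $\chi^2$ kernel times the normal kernel appearing in the representation above, with $\xi = \sqrt{n}\delta$.

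\textbf{Step 2 (Jacobian and power of $v$).} The map $\sigma^2 \mapsto v$ is a decreasing bijection of $(0,\infty)$, and $\mathrm{d}\sigma^2/\sigma^2 = -\mathrm{d}v/v$, so after reversing the limits the measure $\sigma^{-2}\mathrm{d}\sigma^2$ becomes $v^{-1}\mathrm{d}v$. Also $(\sigma^2)^{-n/2} = v^{n/2}\{(n-1)S^2\}^{-n/2}$. The power of $v$ is then $n/2 - 1$. In the representation it is $\frac12 + \frac{n-3}{2} = \frac{n}{2}-1$, so the two agree.

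\textbf{Step 3 (constants).} This is the only real obstacle, and it is pure bookkeeping. Our integrand carries the constant $(2\pi)^{-n/2}(n-1)^{-n/2}S^{-n}$. The representation carries $(2\pi)^{-1/2}(n-1)^{-1/2}2^{-(n-1)/2}/\Gamma(\frac{n-1}{2})$. Their ratio is
\begin{align}
  [(n-1)\pi]^{-\frac{n-1}{2}}[S^2]^{-\frac{n}{2}}\Gamma\left(\tfrac{n-1}{2}\right),
\end{align}
which is the stated constant.

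As a sanity check, setting $\delta=0$ should recover Lemma~\ref{lem:t.integral} with the same constant, since $T_{n-1}(t|0)=T_{n-1}(t)$. I will verify this to catch any slip in the powers of $2$ and $\pi$.
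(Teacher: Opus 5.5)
Your proposal is correct and takes essentially the same route as the paper. Both change variables in $\sigma^2$ so that the integral becomes the normal-mixture representation of the non-central $t$-density, and then match constants; your constant ratio and your $\delta=0$ check against Lemma~\ref{lem:t.integral} both work out. The only difference is cosmetic. The paper uses $v^2=\sigma^2/\{(n-1)S^2\}$ with an inverse-$\chi^2_{n-1}$ mixture of $N(\sqrt{n}\delta\sqrt{n-1}V,(n-1)V^2)$ densities, whereas you use the reciprocal variable $v=(n-1)S^2/\sigma^2$ with a $\chi^2_{n-1}$ mixture.
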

\begin{proof}
  The random variable $T$ follows a non-central $t$-distribution with
  $\nu$ degrees of freedom and non-centrality parameter $\xi$ if
  $T = \frac{Z + \xi}{\sqrt{W^2/\nu}}$ where $Z$ is standard normal
  and $W^2$ is $\chi^2$ with $\nu$ degrees of freedom. Let
  $V^2 = 1 / W^2$, then this is equivalent to the mixture model
  $T|W^2 \sim N(\xi\sqrt{\nu}V, \nu V^2)$ and
  $V^2 \sim \text{Inv-}\chi^2_{\nu}$. We will show that the integral
  in \eqref{eq:noncentral.t.integral} is proportional to an inverse
  chi-squared mixture of $N(t|\sqrt{n}\delta \sqrt{n-1}V, (n-1)V^2)$
  densities with the provided proportionality constant.

  Let $v^2 = \frac{\sigma^2}{(n-1)S^2}$. Furthermore, let
  $\chi^{-2}_{n-1}(v^2)$ be the density of the inverse chi-squared
  distribution with $n-1$ degrees of freedom. From
  Lemma~\ref{lem:like.y} and a change of variables, we have
  \begin{align}
    \begin{split}
      \int_{\sigma^2}&\frac{1}{\sigma^2}\prod_{i=1}^nN(Y_i|\delta\sigma, \sigma^2) \mathrm{d}\sigma^2 \\
      &= \int_{\sigma^2}(2\pi)^{-n/2}(\sigma^2)^{-n/2 - 1}\exp\left\{-\frac{1}{2\sigma^2}\left[(n-1)S^2 + n(\bar{Y} - \delta\sigma)^2\right]\right\}\mathrm{d}\sigma^2
    \end{split}\\
      &= \int_{v^2}(2\pi (n-1)S^2)^{-n/2}(v^2)^{-n/2 - 1}\exp\left\{-\frac{1}{2v^2} -\frac{n(\bar{Y} - \delta\sqrt{n-1}Sv)^2}{2(n-1)S^2v^2}\right\}\mathrm{d}v^2\\
      &= \int_{v^2}(2\pi (n-1)S^2)^{-n/2}(v^2)^{-n/2 - 1}\exp\left\{-\frac{1}{2v^2} -\frac{n(\bar{Y} - \delta\sqrt{n-1}Sv)^2}{2(n-1)S^2v^2}\right\}\frac{\chi^{-2}_{n-1}(v^2)}{\chi^{-2}_{n-1}(v^2)}\mathrm{d}v^2\\
    \begin{split}
      &= \int_{v^2}(2\pi (n-1)S^2)^{-n/2}(v^2)^{-n/2 - 1}\exp\left\{-\frac{1}{2v^2} -\frac{n(\bar{Y} - \delta\sqrt{n-1}Sv)^2}{2(n-1)S^2v^2}\right\}\\
      &\phantom{=\int_{v^2}}\times 2^{\frac{n-1}{2}}\Gamma\left(\frac{n-1}{2}\right) (v^2)^{\frac{n-1}{2} + 1}\exp\left\{\frac{1}{2v^2}\right\}\chi^{-2}_{n-1}(v^2)\mathrm{d}v^2
    \end{split}\\
    \begin{split}
      &= [(n-1)\pi]^{-\frac{n-1}{2}}[S^2]^{-\frac{n}{2}}\Gamma\left(\frac{n-1}{2}\right)\\
      &\times \int_{v^2}\frac{1}{\sqrt{2\pi(n-1)v^2}}\exp\left\{-\frac{n(\bar{Y} - \delta\sqrt{n-1}Sv)^2}{2(n-1)S^2v^2}\right\}\chi^{-2}_{n-1}(v^2)\mathrm{d}v^2
    \end{split}\\
    \begin{split}
      &= [(n-1)\pi]^{-\frac{n-1}{2}}[S^2]^{-\frac{n}{2}}\Gamma\left(\frac{n-1}{2}\right)\\
      &\times \int_{v^2}\frac{1}{\sqrt{2\pi(n-1)v^2}}\exp\left\{-\frac{(\frac{\bar{Y}}{S/\sqrt{n}} - \sqrt{n}\delta\sqrt{n-1}v)^2}{2(n-1)v^2}\right\}\chi^{-2}_{n-1}(v^2)\mathrm{d}v^2
    \end{split}\\
    \label{eq:inv.chis.mix}
    \begin{split}
      &= [(n-1)\pi]^{-\frac{n-1}{2}}[S^2]^{-\frac{n}{2}}\Gamma\left(\frac{n-1}{2}\right)\\
      &\times \int_{v^2}\frac{1}{\sqrt{2\pi(n-1)v^2}}\exp\left\{-\frac{(t - \sqrt{n}\delta\sqrt{n-1}v)^2}{2(n-1)v^2}\right\}\chi^{-2}_{n-1}(v^2)\mathrm{d}v^2.
    \end{split}
  \end{align}
  From inspection \eqref{eq:inv.chis.mix} is the marginal density of
  $T$ where $T|V^2 \sim N(\sqrt{n}\delta \sqrt{n-1}V, (n-1)V^2)$ and
  $V^2 \sim \mathrm{Inv-}\chi^2_{n-1}$.
\end{proof}

\begin{theorem}
  The Bayes factor in \eqref{eq:bf.y} is equal to
  \begin{align}
    \mathrm{BF} = \frac{\int_{\delta}T_{n-1}(t|\sqrt{n}\delta)\pi(\delta)\mathrm{d}\delta}{T_{n-1}(t)}.
  \end{align}
\end{theorem}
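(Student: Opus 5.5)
The plan is to write the Bayes factor in \eqref{eq:bf.y} as a ratio and apply Lemma~\ref{lem:noncentral.t.integral} to the numerator and Lemma~\ref{lem:t.integral} to the denominator.

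First I will rewrite the numerator. Since $\pi(\delta)$ does not depend on $\sigma^2$, and every integrand is nonnegative, Tonelli's theorem lets me swap the order of integration:
\begin{align*}
\int_{\delta}\pi(\delta)\left[\int_{\sigma^2}\frac{1}{\sigma^2}\prod_{i=1}^nN(Y_i|\delta\sigma,\sigma^2)\mathrm{d}\sigma^2\right]\mathrm{d}\delta .
\end{align*}
Lemma~\ref{lem:noncentral.t.integral} evaluates the inner integral for each fixed $\delta$. It equals $[(n-1)\pi]^{-\frac{n-1}{2}}[S^2]^{-\frac{n}{2}}\Gamma\left(\frac{n-1}{2}\right)T_{n-1}(t|\sqrt{n}\delta)$. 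The prefactor depends only on the data and $n$, not on $\delta$, so it comes outside the outer integral. The numerator is therefore this constant times $\int_\delta T_{n-1}(t|\sqrt{n}\delta)\pi(\delta)\mathrm{d}\delta$.

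Next, Lemma~\ref{lem:t.integral} gives the denominator as the same constant times $T_{n-1}(t)$. This constant is strictly positive and finite, because $S^2>0$ almost surely when $n\ge 2$. It therefore cancels in the ratio and leaves the stated expression.

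The only point that needs care is checking that the two prefactors really are identical. This includes the $[S^2]^{-n/2}$ power, which the statement of Lemma~\ref{lem:t.integral} should confirm. It also includes noting that $T_{n-1}(t|0)=T_{n-1}(t)$, which is consistent with the $\delta=0$ case. If the constants in the two lemmas had differed, the BF would pick up a data-independent factor. That factor would not change monotonicity, but it would change the displayed identity, so I will match the constants explicitly.

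Finally, to recover Theorem~\ref{theo:aug.t.as.bf}, I will apply this result to the augmented data $X_1,\ldots,X_n,A$ with $n+1$ observations. This replaces $n-1$ by $n$ and $\sqrt n\delta$ by $\sqrt{n+1}\delta$.
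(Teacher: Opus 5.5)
Your proposal is correct and is essentially the paper's proof: it substitutes \eqref{eq:noncentral.t.integral} into the numerator of \eqref{eq:bf.y} and \eqref{eq:t.integral} into the denominator, so the common factor $[(n-1)\pi]^{-\frac{n-1}{2}}[S^2]^{-\frac{n}{2}}\Gamma\left(\frac{n-1}{2}\right)$ cancels, and your Tonelli interchange and explicit matching of constants only make that one-liner more careful. One caution on your closing aside, which lies outside this statement: the $A$-updated priors \eqref{eq:prior.h0.aug} and \eqref{eq:prior.ha.aug} have normalizers $1/|A-\mu_0|$ and $\frac{2}{|A-\mu_0|}\int\Phi\left(\sign(A-\mu_0)\,\delta\right)\pi(\delta)\mathrm{d}\delta$, which agree only when $\int\Phi(\delta)\pi(\delta)\mathrm{d}\delta = 1/2$ (e.g., $\pi(\delta)$ symmetric about 0), so the reduction to Theorem~\ref{theo:aug.t.as.bf} is exact only under such a condition.
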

\begin{proof}
  Take \eqref{eq:bf.y} and plug in \eqref{eq:noncentral.t.integral}
  from Lemma~\ref{lem:noncentral.t.integral} in the numerator and
  \eqref{eq:t.integral} from Lemma~\ref{lem:t.integral} in the
  denominator.
\end{proof}

\section{More augmentation does not help}
\label{sec:more.m}

We could have derived a more general form for the augmented
$t$-interval where we use $m$ replicates of $A$. That is, we use
augmented data
\begin{align}
  \label{eq:aug.dat.2.2}
  X_1,\ldots,X_n,\underbrace{A,\ldots,A}_{m \text{ times}},
\end{align}
though, $m$ need not be an integer.  Letting $\hat{\mu}$ and
$\hat{\sigma}^2$ be the sample mean and sample variance using this
augmented data \eqref{eq:aug.dat.2.2}, the augmented $t$-interval is
\begin{align}
  \label{eq:aug.t.int.2.2}
  \hat{\mu} \pm \eta \hat{\sigma}/\sqrt{n + m},
\end{align}
where $\eta$ is chosen to control the error probability. Increasing
$m > 1$ turns out to only improve augmented $t$-intervals for small
values of $\nu = (\mu - A)/\sigma$ when $n = 2$, but appears to
otherwise be dominated by the case when $m = 1$ for all $n > 2$. We
will explore these intervals in this section.

We find $\eta$ by Theorem~\ref{theo:form.aug.t.n.2}.
\begin{theorem}
  \label{theo:form.aug.t.n.2}
  \begin{gather}
    \frac{(\hat{\mu} - \mu)^2}{\hat{\sigma}^2/(n + m)} = \frac{m^2(n+m-1)}{n}\frac{(\frac{n}{m}Z - \sqrt{n}\nu)^2}{(n+m)W^2 + m(Z + \sqrt{n}\nu)^2}, \text{ where}\\
    Z = \frac{\bar{X} - \mu}{\sigma/\sqrt{n}} \sim \N(0,1),~  W^2 =  \frac{(n-1)S^2}{\sigma^2} \sim \chi^2_{n-1}, \text{ and } \nu = \frac{\mu - A}{\sigma},
  \end{gather}
  and $Z$ and $W^2$ are independent.
\end{theorem}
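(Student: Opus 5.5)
The plan is a direct computation. I will express the augmented sample mean and the augmented sample variance in terms of $\bar{X}$, $S^2$ and $A$, and then substitute $Z$, $W^2$ and $\nu$.

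For non-integer $m$, I interpret the augmented data \eqref{eq:aug.dat.2.2} as a weighted sample, with weight $1$ on each $X_i$ and total weight $m$ on $A$. Then $\hat{\mu} = (n\bar{X} + mA)/(n+m)$, and $\hat{\sigma}^2$ is the weighted sum of squared deviations divided by $n+m-1$. This agrees with the usual definitions when $m$ is an integer.

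The first step is the numerator. Writing $\bar{X}-\mu = \sigma Z/\sqrt{n}$ and $A - \mu = -\sigma\nu$ gives
\begin{align*}
\hat{\mu}-\mu = \frac{n(\bar{X}-\mu) + m(A-\mu)}{n+m} = \frac{\sigma(\sqrt{n}Z - m\nu)}{n+m} = \frac{\sigma m}{\sqrt{n}(n+m)}\left(\frac{n}{m}Z - \sqrt{n}\nu\right).
\end{align*}

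The second step is the denominator. I use the standard two-group (within/between) decomposition of the sum of squares about $\hat{\mu}$. The $X$-group contributes $(n-1)S^2$ within. The $A$-group has zero within-group spread. The between-group term is $\frac{nm}{n+m}(\bar{X}-A)^2$. Since $\bar{X}-A = \sigma(Z+\sqrt{n}\nu)/\sqrt{n}$, this yields
\begin{align*}
(n+m-1)\hat{\sigma}^2 = \sigma^2\left[W^2 + \frac{m}{n+m}(Z+\sqrt{n}\nu)^2\right] = \frac{\sigma^2\left[(n+m)W^2 + m(Z+\sqrt{n}\nu)^2\right]}{n+m}.
\end{align*}
This is the only step needing some care; I would verify the between-group coefficient by expanding $n(\bar{X}-\hat{\mu})^2 + m(A-\hat{\mu})^2$ directly. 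For the $m=1$ case this is also consistent with the expression for $\hat{\sigma}^2$ used in the proof of Theorem~\ref{theo:squared.half.width}.

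The third step is to form the ratio $(\hat{\mu}-\mu)^2(n+m)/\hat{\sigma}^2$. The factors of $\sigma^2$ cancel and two powers of $(n+m)$ cancel. What remains is $\frac{m^2(n+m-1)}{n}$ times the stated quotient.

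Finally, the distributional claims follow from the standard normal-sampling facts. These are $Z\sim\N(0,1)$, $(n-1)S^2/\sigma^2\sim\chi^2_{n-1}$, and the independence of $\bar{X}$ and $S^2$ (Cochran's theorem / Basu). Since $A$ is a fixed constant, it does not affect any of these. Setting $m=1$ recovers Theorem~\ref{theo:form.aug.t.n.2.orig}, which serves as a sanity check.
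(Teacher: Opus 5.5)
Your proposal is correct and is essentially the paper's proof. The paper also writes $\hat{\mu} = (n\bar{X}+mA)/(n+m)$ and $\hat{\sigma}^2 = \frac{n-1}{n+m-1}S^2 + \frac{nm}{(n+m)(n+m-1)}(\bar{X}-A)^2$, citing a generalization of Welford's formulas where you use the within/between decomposition, then substitutes $Z$, $W^2$, $\nu$ and simplifies (your weighted-sample reading of non-integer $m$ makes explicit what the paper leaves implicit).
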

\begin{proof}
  From a generalization of \citet{welford1962note}, we can re-write
  $\hat{\mu}$ and $\hat{\sigma}^2$ as
  \begin{align}
    \label{eq:welford.mean}
    \hat{\mu} &= \frac{n\bar{X} + mA}{n + m}, \text{ and}\\
    \label{eq:welford.var}
    \hat{\sigma}^2 &= \frac{n-1}{n + m - 1}S^2 + \frac{nm}{(n + m)(n+ m - 1)}(\bar{X} - A)^2.
  \end{align}
  Thus, we have
  \begin{align}
    \frac{(\hat{\mu} - \mu)^2}{\hat{\sigma}^2/(n + m)} &= \frac{\left(\frac{n\bar{X} + mA}{n + m} - \mu\right)^2}{\frac{1}{(n + m)(n + m - 1)}\left((n-1)S^2 + \frac{nm}{(n + m)}(\bar{X} - A)^2\right)}\\
                                                       &= \frac{\frac{m^2}{(n+m)^2}\left(\frac{n}{m}(\bar{X} - \mu) - (\mu - A)\right)^2}{\frac{1}{(n + m)(n + m - 1)}\left((n-1)S^2 + \frac{nm}{(n + m)}((\bar{X}-\mu) + (\mu - A))^2\right)}\\
                                                       &= \frac{m^2(n + m - 1)}{n(n + m)}\frac{\left(\frac{n}{m}\frac{\bar{X} - \mu}{\sigma/\sqrt{n}} - \sqrt{n}\frac{\mu - A}{\sigma}\right)^2}{\frac{(n-1)S^2}{\sigma^2} + \frac{m}{(n + m)}(\frac{\bar{X}-\mu}{\sigma / \sqrt{n}} + \sqrt{n} \frac{\mu - A}{\sigma})^2}\\
                                                       &= \frac{m^2(n + m - 1)}{n}\frac{\left(\frac{n}{m}Z - \sqrt{n}\nu\right)^2}{(n + m)W^2 + m(Z + \sqrt{n}\nu)^2}.
  \end{align}
  The distributions of $Z$ and $W^2$ and their independence follow from elementary statistics texts.
\end{proof}

From Theorem~\ref{theo:form.aug.t.n.2}, for a given $\nu$ and $\eta$, the
coverage failure probability of interval \eqref{eq:aug.t.int.2.2} is
\begin{align}
  \label{eq:alpha.n2.failure.def.2}
  \alpha(\nu,\eta) := \Pr_{\nu}\left[\frac{m^2(n+m-1)}{n}\frac{(\frac{n}{m}Z - \sqrt{n}\nu)^2}{(n+m)W^2 + m(Z + \sqrt{n}\nu)^2} > \eta^2\right].
\end{align}
Thus, to find $\eta$, we maximize \eqref{eq:alpha.n2.failure.def.2} over
$\nu$, which results in a worst-case $\alpha$ as a function of
$\eta$. We can then invert this function to obtain the value of $\eta$
given a worst-case $\alpha$, say $\eta_{\alpha}$. Since
\eqref{eq:alpha.n2.failure.def.2} is a double integral, it is possible
to calculate numerically. Specifically, let
\begin{align}
  g(Z,\nu,\eta) = \frac{1}{\eta^2}\frac{m^2(n+m-1)}{n(n+m)}\left(\frac{n}{m}Z - \sqrt{n}\nu\right)^2 - \frac{m}{n+m}(Z + \sqrt{n}\nu)^2,
\end{align}
then
\begin{align}
  \alpha(\nu,\eta) = \int_{-\infty}^{\infty}\int_{0}^{g(z,\nu,\eta)}\chi^2_{n-1}(w^2)\mathrm{d}w^2\phi(z)\mathrm{d}z,
\end{align}
where $\phi(\cdot)$ is the standard normal density and
$\chi^2_{n-1}(\cdot)$ is the $\chi^2_{n-1}$ density.

We can calculate the expected squared half-widths of these intervals by the following
\begin{align}
  &\E\left[\eta_{\alpha}^2\left(\frac{n-1}{n + m - 1}S^2 + \frac{nm}{(n+m)(n + m - 1)}(\bar{X} - A)^2\right) / (n+m)\right]\\
  &=\frac{\eta_{\alpha}^2\sigma^2}{(n+m)(n + m - 1)}\E\left[\frac{(n-1)S^2}{\sigma^2}\right] + \frac{\eta_{\alpha}^2\sigma^2m}{(n+m)^2(n + m - 1)}\E\left[\left(\frac{\bar{X} - \mu}{\sigma/\sqrt{n}} + \sqrt{n}\frac{\mu - A}{\sigma}\right)^2\right]\\
  &= \frac{\eta_{\alpha}^2\sigma^2(n-1)}{(n+m)(n + m - 1)} + \frac{\eta_{\alpha}^2\sigma^2m(1 + n\nu^2)}{(n+m)^2(n + m - 1)}\\
  \label{eq:e.hw.augt.2}
  &= \frac{n}{(n+m)(n+m-1)}\left(1 + \frac{m\nu^2 - 1}{n+m}\right)\eta_{\alpha}^2\sigma^2,
\end{align}
where $\nu = (\mu - A) / \sigma$.

Inspecting the expected squared half widths of the more generalized
augmented $t$-interval \eqref{eq:e.hw.augt.2} and the standard
$t$-interval \eqref{eq:e.hw.studt}, we see that each is some
multiplier of the true variance. We plot this multiplier for the
standard $t$-interval and the augmented $t$-interval for $m = 1,2,3$
in Figure~\ref{fig:half.width.largerm}. There is always a region where
the augmented $t$-interval beats the standard $t$-interval when
$m = 1$. But $m > 1$ augmented $t$-intervals only have some regions of
improvement over $m = 1$ when $n = 2$, and do not improve over the
standard $t$-intervals for large enough $m$.

\begin{figure}[!htb]
  \centering
  \includegraphics{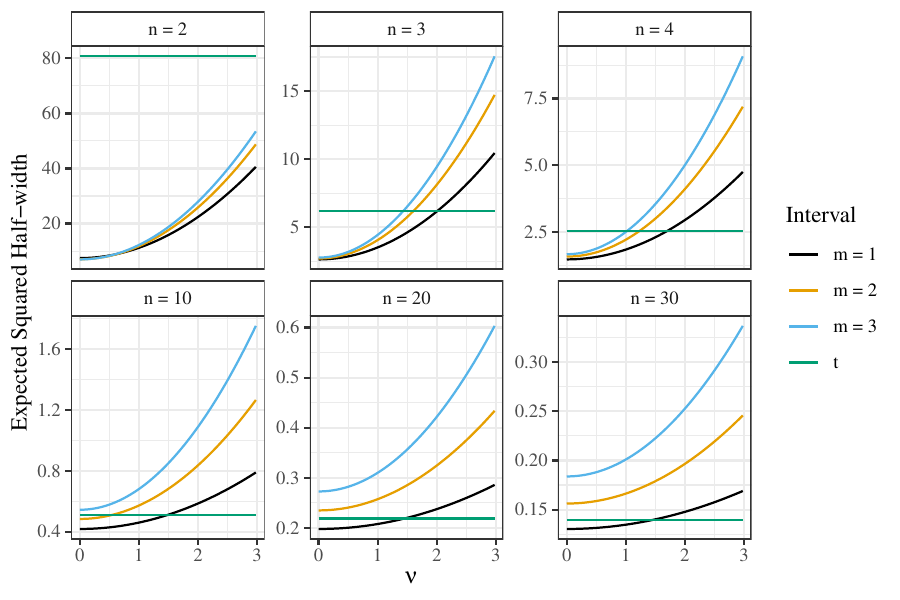}
  \caption{Multiplier to $\sigma^2$ to obtain the expected squared
    half-width of the standard $t$-interval \eqref{eq:e.hw.studt} or
    the augmented $t$-interval \eqref{eq:e.hw.augt.2} at different
    values of $m$. Values are plotted for different values of
    $\nu = (\mu - A) / \sigma$ and the sample size $n$. Smaller
    multiplier values indicate smaller expected squared half-width.}
  \label{fig:half.width.largerm}
\end{figure}

In this section, we used replicates of a single value $A$ to augment
our data. It would have been possible to derive more general augmented
$t$-intervals with augmented data $A_1,A_2,\ldots,A_m$ summarized by
sample mean $\bar{A}$ and sample variance $S_A^2$. However, doing so
would require an optimization over two parameters, $S_A^2/\sigma^2$
and $(\mu - \bar{A})/\sigma$ in \eqref{eq:alpha.n2.failure.def.2} to
find the worst-case $\alpha$ (and, hence, the corresponding
$\eta$). It is simpler, and more directly generalizable, to instead
augment with just $m$ copies of $A$. Thus, $S_A^2 = 0$ and
$\bar{A} = A$ and we need only optimize over
$\nu = (\mu - A) / \sigma$, similar to the $n = 1$ and $m = 1$ case of
Appendix~\ref{sec:bm.deriv}.

\end{document}